\documentclass[a4paper]{amsproc}
\usepackage{amssymb}
\usepackage{mathrsfs}
\usepackage{amsfonts}
\usepackage{amsthm}
\usepackage{amsmath,amscd}
\usepackage{amsxtra}     
\usepackage{bm}
\usepackage[all,cmtip]{xy}
\usepackage{epsfig}
\usepackage{verbatim}
\usepackage{color}
\usepackage{enumerate}
\usepackage{hyperref}
\usepackage{tikz}
\usetikzlibrary{arrows,matrix,shapes,trees}
\usepackage{extarrows}


\theoremstyle{plain}
 \newtheorem{thm}{Theorem}[section]
 \newtheorem{prop}[thm]{Proposition}
 \newtheorem{lem}[thm]{Lemma}
 \newtheorem{cor}[thm]{Corollary}

 \newtheorem{lem'}[thm]{``Lemma''}
 
\theoremstyle{definition}

\theoremstyle{remark}
 \newtheorem{rmk}{Remark}[section]
 \numberwithin{equation}{section}


\setlength{\textwidth}{28cc} \setlength{\textheight}{42cc}
\newcommand{\N}{{\mathbb N}}
\newcommand{\Q}{{\mathbb Q}}
\newcommand{\Z}{{\mathbb Z}}

\newcommand{\F}{{\mathbb F}}
\newcommand{\Gm}{\mathbb{G}_{\mr{m}}}

\newcommand{\Gml}{\mathbb{G}_{\mr{m,log}}}
\newcommand{\Gmlb}{\overline{\mathbb{G}}_{\mr{m,log}}}

\newcommand{\mr}{\mathrm}
\newcommand{\mc}{\mathcal}

\newcommand{\fsX}{(\mr{fs}/X)}
\newcommand{\fsXet}{(\mr{fs}/X)_{\mr{\acute{e}t}}}
\newcommand{\fsXfl}{(\mr{fs}/X)_{\mr{fl}}}
\newcommand{\fsXket}{(\mr{fs}/X)_{\mr{k\acute{e}t}}}
\newcommand{\fsXkfl}{(\mr{fs}/X)_{\mr{kfl}}}
\newcommand{\Xet}{X_{\mr{\acute{e}t}}}
\newcommand{\Xket}{X_{\mr{k\acute{e}t}}}
\newcommand{\Xfl}{X_{\mr{fl}}}
\newcommand{\Xkfl}{X_{\mr{kfl}}}
\newcommand{\Spec}{\mathop{\mr{Spec}}}

\title[Comparison of Kummer topologies with classical topologies]{Comparison of Kummer logarithmic topologies with classical topologies}

\subjclass[2020]{14F20 (primary), 14A21 (secondary)}


\keywords{log schemes, Kummer flat topology, Kummer \'etale topology, comparison of cohomology}

\author[Heer Zhao]{\bfseries Heer Zhao}
\address{Fakult\"at f\"ur Mathematik, 
    Universit\"at Duisburg-Essen, 
    Essen 45117, 
    Germany}
\email{heer.zhao@uni-due.de}



\begin{document}

\vspace{18mm} \setcounter{page}{1} \thispagestyle{empty}

\begin{abstract}
We compare the Kummer flat (resp. Kummer \'etale) cohomology with the flat (resp. \'etale) cohomology with coefficients in smooth commutative group schemes, finite flat group schemes and Kato's logarithmic multiplicative group. We will be particularly interested in the case of algebraic tori in the Kummer flat topology. We also make some computations for certain special cases of the base log scheme.
\end{abstract}

\maketitle

\section*{Notation and conventions}
Let $X=(X,M_X)$ be an fs log scheme, we denote by $(\mr{fs}/X)$ the category of fs log schemes over $X$, and denote by $(\mr{fs}/X)_{\mr{\acute{e}t}}$ (resp. $(\mr{fs}/X)_{\mr{fl}}$, resp. $(\mr{fs}/X)_{\mr{k\acute{e}t}}$, resp. $(\mr{fs}/X)_{\mr{kfl}}$) the classical \'etale site (resp. classical flat site, resp. Kummer \'etale site, resp. Kummer flat site) on $(\mr{fs}/X)$. In order to shorten formulas, we will mostly abbreviate $\fsXet$ (resp. $\fsXfl$, resp. $\fsXket$, resp. $\fsXkfl$) as $\Xet$ (resp. $\Xfl$, resp. $\Xket$, resp. $\Xkfl$). We refer to \cite[2.5]{ill1} for the classical \'etale site and the Kummer \'etale site, and \cite[Def. 2.3]{kat2} and \cite[\S 2.1]{niz1} for the Kummer flat site. The site $\fsXfl$ is an obvious analogue of $\fsXet$. We have natural ``forgetful'' maps of sites:
\begin{equation}\label{eq0.1}
\varepsilon_{\mr{\acute{e}t}}:(\mr{fs}/X)_{\mr{k\acute{e}t}}\rightarrow (\mr{fs}/X)_{\mr{\acute{e}t}}  
\end{equation}
and
\begin{equation}\label{eq0.2}
\varepsilon_{\mr{fl}}:(\mr{fs}/X)_{\mr{kfl}}\rightarrow (\mr{fs}/X)_{\mr{fl}}.
\end{equation}
We denote by $(\mr{st}/X)$ the full subcategory of $(\mr{fs}/X)$ consisting of strict fs log schemes over $X$. Note that the category $(\mr{st}/X)$ is canonically identified with the category of the schemes over the underlying scheme of $X$.

Kato's multiplicative group (or the log multiplicative group) $\Gml$ is the sheaf on $\Xet$ defined by $\Gml(U)=\Gamma(U,M^{\mr{gp}}_U)$ for any $U\in\fsX$, where $M^{\mr{gp}}_U$ denotes the group envelope of the log structure $M_U$ of $U$. The classical \'etale sheaf $\Gml$ is also a sheaf on $\Xket$ and $X_{\mr{kfl}}$ respectively, see \cite[Cor. 2.22]{niz1} for a proof.

By convention, for any sheaf of abelian groups $F$ on $\Xkfl$ and a subgroup sheaf $G$ of $F$ on $\Xkfl$, we denote by $(F/G)_{\Xfl}$ the quotient sheaf on $\Xfl$, while $F/G$ denotes the quotient sheaf on $\Xkfl$. We abbreviate the quotient sheaf $\Gml/\Gm$ on $\Xkfl$ as $\Gmlb$.

Let $F$ be a sheaf on $X_{\mr{\acute{e}t}}$ (resp. $X_{\mr{fl}}$, resp. $X_{\mr{k\acute{e}t}}$, resp. $X_{\mr{kfl}}$), and $U\in (\mr{fs}/X)$, we denote by $H_{\mr{\acute{e}t}}^i(U,F)$ (resp. $H_{\mr{fl}}^i(U,F)$, resp. $H_{\mr{k\acute{e}t}}^i(U,F)$, resp. $H_{\mr{kfl}}^i(U,F)$) the $i$-th sheaf cohomology group of the sheaf $F$ on $X_{\mr{\acute{e}t}}$ (resp. $X_{\mr{fl}}$, resp. $X_{\mr{k\acute{e}t}}$, resp. $X_{\mr{kfl}}$). For a sheaf $F$ on $X_{\mr{fl}}=(\mr{fs}/X)_{\mr{fl}}$, the canonical map $(\mr{fs}/X)_{\mr{fl}}\rightarrow (\mr{st}/X)_{\mr{fl}}$ of sites satisfies the conditions from \cite[\href{https://stacks.math.columbia.edu/tag/00XU}{Tag 00XU}]{stacks-project}, hence by \cite[\href{https://stacks.math.columbia.edu/tag/03YU}{Tag 03YU}]{stacks-project} the cohomology of the sheaf $F$ on $(\mr{fs}/X)_{\mr{fl}}$ can be computed on the smaller site $(\mr{st}/X)_{\mr{fl}}$. Hence for $U\in (\mr{st}/X)$, we will use the same notation $H_{\mr{fl}}^i(U,F)$ for the cohomology of $F$ on any of the two sites $(\mr{fs}/X)_{\mr{fl}}$ and $(\mr{st}/X)_{\mr{fl}}$.

Let $G$ be a group scheme over $X$, and $F$ a presheaf of abelian groups on $(\mr{fs}/X)$ endowed with a $G$-action. We denote by $H^i_X(G,F)$ the $i$-th cohomology group of the $G$-module $F$, and see \cite[Chap. II, \S 3]{d-g1} for the definition of such cohomology groups. For an abstract group $\Gamma$ and a $\Gamma$-module $N$, we denote by $H^i(\Gamma,N)$ the $i$-th cohomology group of the $\Gamma$-module $N$.

\section*{Introduction}
According to \cite[\S 0]{ill2}, the notion of log structure is due to L. Illusie and J. M. Fontaine, and the theory of log structure is essentially carried out by K. Kato. One merit of log geometry is that certain non-smooth morphisms in classical algebraic geometry become smooth (more precisely log smooth) in the world of log geometry. There is also the notion of log \'etale morphism, as well as the notion of log flat morphism in log geometry. Using log \'etale morphisms (resp. Kummer log \'etale morphisms), K. Fujiwara, K. Kato, and C. Nakayama developed a theory of log \'etale topology (resp. Kummer log \'etale topology), see \cite{ill1} for an overview. Using Kummer log flat morphisms, K. Kato developed a theory of Kummer log flat topology, see \cite{kat2} as well as \cite{niz1}. We will call Kummer log \'etale (resp. Kummer log flat) simply by Kummer \'etale (resp. Kummer flat).
 
In order to understand the cohomology theory in the Kummer \'etale topology (resp. Kummer flat topology), one is led to the comparison of the Kummer \'etale topology (resp. Kummer flat topology) with the classical \'etale topology (resp. classical flat topology). The main topic of this paper is to investigate such comparisons, i.e. to investigate the functor $R^i\varepsilon_{\mr{\acute{e}t}*}$ (resp. $R^i\varepsilon_{\mr{fl}*}$) associated to the map of sites (\ref{eq0.1}) (resp. (\ref{eq0.2})) for $i>0$.

\subsection*{Main results and Outline}
In the first half of Subsection \ref{subsec1.1}, we study the higher direct images $R^i\varepsilon_{\mr{\acute{e}t}*}G$ for $G$ a smooth commutative group scheme with connected fibres over the base. The main result is the following Theorem, see also Theorem \ref{thm1.1}.

\begin{thm}\label{thm0.1}
Let $X$ be a locally  noetherian fs log scheme, let $G$ be a smooth commutative group scheme with connected fibers over the underlying scheme of $X$, and let $i$ be a positive integer. 
\begin{enumerate}[(1)]
\item We have $$R^i\varepsilon_{\mr{\acute{e}t}*}G=\varinjlim_n (R^i\varepsilon_{\mr{\acute{e}t}*}G)[n]=\bigoplus_{\text{$l$ prime}}(R^i\varepsilon_{\mr{\acute{e}t}*}G)[l^\infty],$$
where $(R^i\varepsilon_{\mr{\acute{e}t}*}G)[n]$ denotes the $n$-torsion subsheaf of $R^i\varepsilon_{\mr{\acute{e}t}*}G$ and $(R^i\varepsilon_{\mr{\acute{e}t}*}G)[l^\infty]$ denotes the $l$-primary part of $R^i\varepsilon_{\mr{\acute{e}t}*}G$ for a prime number $l$.
\item The $l$-primary part $(R^i\varepsilon_{\mr{\acute{e}t}*}G)[l^\infty]$ is supported on the locus where $l$ is invertible.
\item If $n$ is invertible on $X$, then 
$$(R^i\varepsilon_{\mr{\acute{e}t}*}G)[n]=R^i\varepsilon_{\mr{\acute{e}t}*}G[n]=G[n](-i)\otimes_{\Z}\bigwedge^i(\Gml/\Gm)_{X_{\mr{\acute{e}t}}}.$$
\end{enumerate}
\end{thm}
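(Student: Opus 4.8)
The plan is to reduce all three statements to a computation on stalks at log geometric points, where the higher direct images become continuous cohomology of the Kummer \'etale fundamental group. For a log geometric point $\bar x\to X$ with log strict henselization $X_{\bar x}$, the stalk $(R^i\varepsilon_{\mr{\acute{e}t}*}\mc F)_{\bar x}$ is $H^i_{\mr{k\acute{e}t}}(X_{\bar x},\mc F)$. Since $X_{\bar x}$ is strictly henselian, the classical \'etale cohomology over it vanishes in positive degrees, so the spectral sequence relating Kummer \'etale cohomology over $X_{\bar x}$ to the continuous cohomology of $\pi=\pi_1^{\mr{k\acute{e}t}}(X_{\bar x})$ with coefficients in the classical cohomology of the universal Kummer cover $X_{\bar x}^{\sim}$ degenerates, identifying $H^i_{\mr{k\acute{e}t}}(X_{\bar x},G)$ with the group cohomology $H^i\big(\pi,G(X_{\bar x}^{\sim})\big)$. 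I will use the standard description $\pi\cong\mr{Hom}\big((\Gmlb)_{\bar x},\widehat{\Z}{}'(1)\big)\cong\widehat{\Z}{}'(1)^{r}$, where $r$ is the rank of $(\Gmlb)_{\bar x}$ and the prime denotes the prime-to-$p$ part, $p$ being the residue characteristic at $\bar x$.

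Parts (1) and (2) now follow from general properties of $\pi$. For (1), the positive-degree continuous cohomology of a profinite group with discrete coefficients is a filtered colimit of the cohomology of its finite quotients and is therefore torsion; hence $R^i\varepsilon_{\mr{\acute{e}t}*}G$ is torsion for $i>0$, and the displayed decompositions into the $n$-torsion subsheaves and the $l$-primary parts are then purely formal. For (2), the group $\pi$ is pro-(prime-to-$p$), so $H^{>0}(\pi,-)$ is prime-to-$p$ torsion; thus the stalk of $(R^i\varepsilon_{\mr{\acute{e}t}*}G)[l^\infty]$ vanishes at every point of residue characteristic $l$, which is exactly the assertion that this sheaf is supported on the locus where $l$ is invertible.

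For (3), I would first pass to finite coefficients. With $n$ invertible the map $[n]\colon G\to G$ is classically, hence Kummer, \'etale and surjective, so $0\to G[n]\to G\xrightarrow{\,n\,}G\to0$ is exact on $\Xket$ and yields
\[
0\to (R^{i-1}\varepsilon_{\mr{\acute{e}t}*}G)/n\to R^i\varepsilon_{\mr{\acute{e}t}*}G[n]\to (R^i\varepsilon_{\mr{\acute{e}t}*}G)[n]\to 0 .
\]
The identity $(R^i\varepsilon_{\mr{\acute{e}t}*}G)[n]=R^i\varepsilon_{\mr{\acute{e}t}*}G[n]$ is then equivalent to the $n$-divisibility of $R^{i-1}\varepsilon_{\mr{\acute{e}t}*}G$; for $i=1$ this is the divisibility of $\varepsilon_{\mr{\acute{e}t}*}G=G$, and in general it follows by induction once the lower direct images are known, since their torsion stalks $\bigwedge^{i-1}(\Gmlb)_{\bar x}\otimes G[m](-(i-1))$ assemble over $m$ into an $n$-divisible group. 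It remains to compute $R^i\varepsilon_{\mr{\acute{e}t}*}G[n]$. On stalks $G[n]$ is a trivial $\pi$-module, as it descends from the strictly henselian underlying scheme on which the classical fundamental group is trivial; using $H^1(\widehat{\Z}{}'(1),G[n])=G[n](-1)$, the vanishing of $H^{\ge2}(\widehat{\Z}{}'(1),-)$, and the K\"unneth formula for $\pi=\widehat{\Z}{}'(1)^{r}$, I obtain
\[
H^i(\pi,G[n])\cong \textstyle\bigwedge^i (\Gmlb)_{\bar x}\otimes G[n](-i).
\]
Globally this is realized by the cup-product map $\bigwedge^i(\Gmlb)\otimes G[n](-i)\to R^i\varepsilon_{\mr{\acute{e}t}*}G[n]$ built from the canonical class of $\Gmlb$ in $R^1\varepsilon_{\mr{\acute{e}t}*}\mu_n$ furnished by Kummer covers, which is an isomorphism because it is one on every stalk.

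I expect the principal difficulty to lie in the globalization in (3): one must produce the comparison morphism on all of $\Xet$, verify that the cup products computing the higher direct images genuinely realize the exterior-power structure on $\Gmlb$, and match the Tate twists $(-i)$ compatibly, rather than only reading off the abstract group $H^i(\pi,G[n])$ stalkwise. A subsidiary technical point is the $n$-divisibility of the lower direct images needed to pass between $(R^i\varepsilon_{\mr{\acute{e}t}*}G)[n]$ and $R^i\varepsilon_{\mr{\acute{e}t}*}G[n]$; establishing these two points rigorously is where the real work sits, the remaining steps being standard properties of profinite cohomology and of the Kummer \'etale fundamental group.
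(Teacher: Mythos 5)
Your proposal is correct in substance and, once unwound, runs on the same engine as the paper's proof: reduction to a noetherian strictly henselian local base with a chart $P\xrightarrow{\cong}M_{X,x}/\mc{O}_{X,x}^{\times}$, the tower of Kummer covers $X_n=X\times_{\Spec\Z[P]}\Spec\Z[P^{1/n}]$ with Galois groups $H_n$, group cohomology over these covers, the Kummer sequence, a divisibility induction, and the Kato--Nakayama computation for finite coefficients. The genuine difference is one of economy, in two places. First, where you assert the full degeneration $H^i_{\mr{k\acute{e}t}}(X_{\bar x},G)\cong H^i\bigl(\pi,G(X_{\bar x}^{\sim})\bigr)$, the paper proves and uses strictly less: it shows that every class in $H^i_{\mr{k\acute{e}t}}(X,F)$ dies on some $X_n$ with $(n,p)=1$ (Corollary \ref{cor1.2}, which rests on Nizio{\l}'s refinement theorem for Kummer \'etale covers plus strict henselianity --- this refinement step, not merely the vanishing of classical cohomology over a strictly henselian base, is the content hidden in your word ``standard''), and then reads off torsionness and $p$-torsion-freeness directly from the colimit \v{C}ech spectral sequence, whose $E_2$-terms $\varinjlim_n H^i\bigl(H_n,H^j_{\mr{k\acute{e}t}}(X_n,F)\bigr)$ are torsion and prime-to-$p$ for $i>0$ and vanish for $i=0$, $j>0$ --- no degeneration is needed for parts (1) and (2). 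Your degeneration claim is in fact true, but its proof is exactly this refinement lemma applied over each $X_m$, so you should not treat it as cost-free. Second, the ``principal difficulty'' you flag in (3) --- globalizing the stalkwise K\"unneth computation via cup products with matching Tate twists --- is a solved problem: it is precisely the Kato--Nakayama theorem (Theorem \ref{thm0.6} in the paper), which the paper simply cites to get $R^i\varepsilon_{\mr{\acute{e}t}*}G[n]=G[n](-i)\otimes_{\Z}\bigwedge^i(\Gml/\Gm)_{X_{\mr{\acute{e}t}}}$, so rederiving it on stalks and rebuilding the comparison map is redundant. Your divisibility induction coincides with the paper's (note that the base case, surjectivity of $[n]$ as an \'etale epimorphism, genuinely uses the connected-fibers hypothesis via the structure theorem for connected algebraic groups --- make that explicit); with Corollary \ref{cor1.2} proved rather than assumed and Theorem \ref{thm0.6} cited, your outline closes up into the paper's argument.
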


When the underlying scheme of the base $X$ is over $\Q$, Theorem \ref{thm0.1} actually gives an explicit description of $R^i\varepsilon_{\mr{\acute{e}t}*}G$, see Corollary \ref{cor1.6}. When the underlying scheme of the base $X$ is over a finite field, we also have an explicit description of $R^i\varepsilon_{\mr{\acute{e}t}*}G$, see Corollary \ref{cor1.7}.

In the second half of Subsection \ref{subsec1.1}, we study the first higher direct image $R^1\varepsilon_{\mr{fl}*}G$ for $G$ a smooth commutative group scheme over the base. The main result is the following Theorem, see also Theorem \ref{thm1.5}.

\begin{thm}\label{thm0.2}
Let $X$ be a locally noetherian fs log scheme, and let $G$ be either a finite flat group scheme over the underlying scheme of $X$ or a smooth commutative group scheme over the underlying scheme of $X$. We endow $G$ with the induced log structure from $X$. Then we have 
$$R^1\varepsilon_{\mr{fl}*} G=
\varinjlim_{n}\mc{H}om_{X}(\Z/n\Z(1),G)\otimes_{\Z}(\Gml/\Gm)_{X_{\mr{fl}}}.$$
\end{thm}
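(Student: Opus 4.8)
The plan is to construct an explicit comparison map and then check it is an isomorphism by a local computation on $\Xfl$. The basic input is the \emph{logarithmic Kummer sequence} $0\to\mu_n\to\Gml\xrightarrow{n}\Gml\to 0$ on $\Xkfl$ (with $\mu_n:=\Z/n\Z(1)$), exact precisely because $n$-th roots of sections of $M^{\mr{gp}}$ can be extracted in the Kummer flat topology. Pushing forward along $\varepsilon_{\mr{fl}}$, using $\varepsilon_{\mr{fl}*}\Gml=\Gml$ together with the snake lemma applied to $0\to\Gm\to\Gml\to(\Gml/\Gm)\to 0$, the cokernel of $n\colon\Gml\to\Gml$ on $\Xfl$ is $(\Gml/\Gm)\otimes\Z/n\Z$, whence a natural map $(\Gml/\Gm)\otimes\Z/n\Z\to R^1\varepsilon_{\mr{fl}*}\mu_n$. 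Composing with the pushforward $R^1\varepsilon_{\mr{fl}*}\mu_n\to R^1\varepsilon_{\mr{fl}*}G$ attached to a homomorphism $\mu_n\to G$, and using that $\mc{H}om_X(\mu_n,G)$ is $n$-torsion (so the factor $\otimes\Z/n\Z$ is absorbed), I obtain for each $n$ a map $\mc{H}om_X(\mu_n,G)\otimes(\Gml/\Gm)\to R^1\varepsilon_{\mr{fl}*}G$. These are compatible with the transition maps in $n$ and assemble into the candidate isomorphism $\Theta\colon\varinjlim_n\mc{H}om_X(\mu_n,G)\otimes(\Gml/\Gm)_{\Xfl}\to R^1\varepsilon_{\mr{fl}*}G$.

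To prove $\Theta$ is an isomorphism I would argue locally on $\Xfl$, so I may assume $X$ admits a global chart $\N^r\to M_X$ (equivalently, $(\Gml/\Gm)$ is constant). Recall that $R^1\varepsilon_{\mr{fl}*}G$ is the flat sheafification of $U\mapsto H^1_{\mr{kfl}}(U,G)$, and that the Leray sequence for $\varepsilon_{\mr{fl}}$ (with $\varepsilon_{\mr{fl}*}G=G$) identifies its sections with the Kummer torsors modulo the classical flat ones. The geometric heart is the standard $n$-th root cover $U_n\to U$: by the local structure theory of Kummer morphisms, every Kummer flat cover is, fppf-locally on the base, refined by such a $U_n$, and $U_n\to U$ is a torsor under the finite flat group scheme of multiplicative type $\Delta_n:=\mc{H}om_U\big((\Gml/\Gm)\otimes\Z/n\Z,\,\Gm\big)$. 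It therefore suffices to describe the $G$-torsors trivialized by $U_n$.

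For this I would run the descent (Hochschild--Serre) spectral sequence of the $\Delta_n$-torsor $U_n\to U$, using that $\Delta_n$ acts trivially on $G$ (as $G$ is pulled back from the base). Its low-degree terms present the group of $U_n$-split Kummer torsors modulo classical ones as the inflation image $H^1(\Delta_n,G)=\mathrm{Hom}_{\mr{gp}}(\Delta_n,G)$, the group-scheme cohomology being that of \cite[Chap. II, \S 3]{d-g1}. Cartier duality then computes this Hom-group: since $\Delta_n$ is dual to $(\Gml/\Gm)\otimes\Z/n\Z$, one has $\mathrm{Hom}_{\mr{gp}}(\Delta_n,G)\cong\mc{H}om_X(\mu_n,G)\otimes(\Gml/\Gm)$, matching the source of $\Theta$. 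Passing to $\varinjlim_n$ (the covers $U_n$ being cofinal among standard Kummer covers) yields the stated formula; the reduction of the smooth case to the finite flat one is by $\mc{H}om_X(\mu_n,G)=\mc{H}om_X(\mu_n,G[n])$, so only the $n$-torsion of $G$ intervenes.

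The main obstacle is the descent computation just sketched: one must show that precisely the inflation part $\mathrm{Hom}_{\mr{gp}}(\Delta_n,G)$ survives, i.e. that the higher edge terms and the obstruction in $H^2(\Delta_n,G)$ contribute no new classes after sheafification, and conversely that every Kummer-new torsor is captured this way (surjectivity of $\Theta$). Establishing exactness of the logarithmic Kummer sequence, and in the base case $G=\mu_n$ checking that $R^1\varepsilon_{\mr{fl}*}\Gml$ carries no spurious $n$-torsion that would enlarge $R^1\varepsilon_{\mr{fl}*}\mu_n$ beyond $(\Gml/\Gm)\otimes\Z/n\Z$, are the two technical points on which the argument rests; the compatibility of the Cartier-duality identification with the transition maps of the direct system is then a bookkeeping check.
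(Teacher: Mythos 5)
Your construction of the comparison map $\Theta$ is sound, but it is not new ground: it is exactly Kato's map, and the paper simply imports it from \cite[Thm.~4.1]{kat2} (your identification $R^1\varepsilon_{\mr{fl}*}\mu_n\cong(\Gml/\Gm)_{\Xfl}\otimes_{\Z}\Z/n\Z$ is fine, granted logarithmic Hilbert 90, $R^1\varepsilon_{\mr{fl}*}\Gml=0$, which the paper quotes as Theorem \ref{thm0.5}). The genuine gap is in your local verification, at precisely the step you yourself flag as ``the main obstacle.'' The \v{C}ech complex of the standard cover $U_n\to U$ with coefficients in $G$ is $G(U_n)\to G(U_n\times H_n)\to G(U_n\times H_n^2)\to\cdots$, i.e.\ the Hochschild complex of the diagonalizable group $H_n$ (your $\Delta_n$) with coefficients in the \emph{induced} module $\tilde{G}\colon T\mapsto G(T\times_U U_n)$ --- not in $G$ with trivial action. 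Identifying its $H^1$ with $\mr{Hom}_{\mr{gp}}(\Delta_n,G)\cong\mr{Hom}_U(\Z/n\Z(1),G)\otimes_{\Z}P^{\mr{gp}}$ over a strictly Henselian local noetherian (non-artinian) base is the entire analytic content of the theorem, and your sketch asserts it (``the higher edge terms and the obstruction in $H^2(\Delta_n,G)$ contribute no new classes'') without a mechanism. Kato and Nizio\l{} prove this first for artinian base, pass to complete local rings by limits, and then to Henselian local rings by fppf descent --- and it is exactly that last descent step which forces $G$ to be affine in \cite[Thm.~4.1(ii)]{kat2}. Since the point of the statement is to drop affineness for smooth $G$, your argument omits the one new ingredient of the paper: the Key Lemma (Lemma \ref{lem1.3}), which shows $\check{H}^i_{\mr{kfl}}(X_m/X,G)\xrightarrow{\cong}\check{H}^i_{\mr{kfl}}(x_m/x,G)$ for Henselian local $X$ with closed point $x$, proved by representing the \v{C}ech coboundary $d^{i-1}\colon\underline{C}^{i-1}(G)\to\underline{Z}^i(G)$ by a smooth morphism of algebraic spaces (Weil restriction along the finite flat maps $X_m\times H_m^i\to X$, plus vanishing of Hochschild cohomology of the diagonalizable $H_m$ with coherent coefficients, \cite[Exp.~I, Thm.~5.3.3]{sga3-1}) and then lifting sections over $x$ by Henselianness (Lemma \ref{lem1.2}). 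Nothing in your proposal substitutes for this.

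There is also a concrete error in your closing reduction: the claim that the smooth case reduces to the finite flat one ``by $\mc{H}om_X(\Z/n\Z(1),G)=\mc{H}om_X(\Z/n\Z(1),G[n])$'' only rewrites the \emph{source} of $\Theta$. It gives no comparison of $R^1\varepsilon_{\mr{fl}*}G$ with $R^1\varepsilon_{\mr{fl}*}G[n]$: for smooth $G$ the scheme $G[n]$ need not be finite flat (e.g.\ $G=\Ga$ in characteristic $p$), and even when it is, you would need that every $G$-torsor split by a Kummer cover is induced from a $G[n]$-torsor --- which is essentially the surjectivity of $\Theta$, i.e.\ the thing to be proved, not an input. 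The paper instead handles the smooth case directly: Theorem \ref{thm1.4} computes $H^1_{\mr{kfl}}(X,G)$ over strictly Henselian local bases by combining Nizio\l's artinian computation with the Key Lemma, and Theorem \ref{thm1.5} sheafifies as in \cite[Thm.~3.12]{niz1}, quoting the finite flat case from \cite{kat2}. In short, your outline reproduces the Kato--Nizio\l{} skeleton correctly but stops exactly at the step this paper was written to supply.
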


In Subsection \ref{subsec1.2}, we mainly study the 2nd higher direct image $R^2\varepsilon_{\mr{fl}*}G$ for a torus $G$ over the base. The main result is the following theorem, see also Theorem \ref{thm1.8}.

\begin{thm}\label{thm0.3}
Let $X$ be a locally  noetherian fs log scheme, and let $G$ be a torus over the underlying scheme of $X$. 
\begin{enumerate}[(1)]
\item We have 
$$R^2\varepsilon_{\mr{fl}*}G=\varinjlim_{n} (R^2\varepsilon_{\mr{fl}*}G)[n]=\bigoplus_{\text{$l$ prime}}(R^2\varepsilon_{\mr{fl}*}G)[l^\infty],$$
where $(R^2\varepsilon_{\mr{fl}*}G)[n]$ denotes the $n$-torsion subsheaf of $R^2\varepsilon_{\mr{fl}*}G$ and $(R^2\varepsilon_{\mr{fl}*}G)[l^\infty]$ denotes the $l$-primary part of $R^2\varepsilon_{\mr{fl}*}G$ for a prime number $l$.
\item We have $(R^2\varepsilon_{\mr{fl}*}G)[n]=R^2\varepsilon_{\mr{fl}*}G[n]$.
\item The $l$-primary part $(R^2\varepsilon_{\mr{fl}*}G)[l^\infty]$ is supported on the locus where $l$ is invertible.
\item If $n$ is invertible on $X$, then 
$$(R^2\varepsilon_{\mr{fl}*}G)[n]=R^2\varepsilon_{\mr{fl}*}G[n]=G[n](-2)\otimes_{\Z}\bigwedge^2(\Gml/\Gm)_{X_{\mr{fl}}}.$$
\end{enumerate}
\end{thm}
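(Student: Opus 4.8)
The plan is to mirror the proof of the \'etale statement (Theorem \ref{thm0.1}) in the flat topology and in degree $2$, feeding in the degree-$1$ flat computation (Theorem \ref{thm0.2}) to control the $R^1$ terms, and to isolate the genuinely flat phenomenon---the behaviour of $p$-power Kummer covers in characteristic $p$---as the one new ingredient. Throughout I would work \'etale-locally on $X$, where $G\cong\Gm\otimes_\Z X_*(G)$ with $X_*(G)$ an \'etale-locally constant finite free sheaf; since $R^q\varepsilon_{\mr{fl}*}(-)$ and the formation of torsion subsheaves commute with tensoring by such a sheaf and with \'etale localisation, this reduces every assertion to the case $G=\Gm$.

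The central device is the Kummer sequence $0\to G[n]\to G\xrightarrow{\times n}G\to 0$ on $\Xkfl$, which is exact because $\times n$ is already an fppf epimorphism with kernel the finite flat group scheme $G[n]$. Applying $R\varepsilon_{\mr{fl}*}$ and reading off degree $2$ yields the exact sequence
\begin{equation*}
0\to\mr{coker}\bigl(R^1\varepsilon_{\mr{fl}*}G\xrightarrow{\times n}R^1\varepsilon_{\mr{fl}*}G\bigr)\to R^2\varepsilon_{\mr{fl}*}(G[n])\to(R^2\varepsilon_{\mr{fl}*}G)[n]\to 0.
\end{equation*}
By Theorem \ref{thm0.2}, $R^1\varepsilon_{\mr{fl}*}G=\varinjlim_m\mc{H}om_X(\Z/m\Z(1),G)\otimes_\Z(\Gml/\Gm)_{\Xfl}$; for $G=\Gm$ the Cartier duals $\mc{H}om_X(\Z/m\Z(1),\Gm)=\Z/m\Z$ assemble in the colimit into the divisible sheaf $\Q/\Z\otimes(\Gml/\Gm)_{\Xfl}$. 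Divisibility makes $\times n$ surjective, so the cokernel vanishes and I obtain part (2): $(R^2\varepsilon_{\mr{fl}*}G)[n]=R^2\varepsilon_{\mr{fl}*}(G[n])$.

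For part (1) I would establish that $R^q\varepsilon_{\mr{fl}*}G$ is torsion for $q\ge 1$ by a transfer argument: up to an fppf refinement, which $\Xfl$ already resolves, a section of $R^q\varepsilon_{\mr{fl}*}G$ is split by a genuine Kummer cover $U'\to U$, finite locally free of some degree $d$, and the composite of restriction with the trace is multiplication by $d$; hence the section is $d$-torsion. The decomposition in part (1) is then the standard primary decomposition of a torsion abelian sheaf, and $R^2\varepsilon_{\mr{fl}*}G=\varinjlim_n(R^2\varepsilon_{\mr{fl}*}G)[n]$ identifies a torsion sheaf with the colimit of its $n$-torsion. For part (4), when $n$ is invertible $G[n]$ is finite \'etale, and on $n$-torsion sheaves with $n$ invertible the Kummer flat and Kummer \'etale topologies compute the same cohomology; thus $R^2\varepsilon_{\mr{fl}*}(G[n])=R^2\varepsilon_{\mr{\acute{e}t}*}(G[n])$, and Theorem \ref{thm0.1}(3) with $i=2$ gives $G[n](-2)\otimes_\Z\bigwedge^2(\Gml/\Gm)_{\Xet}$; identifying $\Gmlb$ and its exterior square on $\Xet$ with those on $\Xfl$ (the sheaf being represented by the same lattice on both sites) and combining with part (2) yields the stated formula.

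The main obstacle is part (3): the vanishing of the $p$-primary part on the locus where $p$ is not invertible. By part (2) this amounts to showing $R^2\varepsilon_{\mr{fl}*}(G[p^k])=0$ in characteristic $p$, and after reduction to $G=\Gm$ to the vanishing $R^2\varepsilon_{\mr{fl}*}\mu_{p^k}=0$ there. Here I can no longer borrow the \'etale picture, since $p$-power Kummer covers genuinely exist: locally the cover $\Spec R[\tfrac1m P]\to\Spec R[P]$ is Galois with group scheme the Cartier dual $(\tfrac1m P^{\mr{gp}}/P^{\mr{gp}})^\vee\cong\mu_m^{\oplus r}$, so $R^2\varepsilon_{\mr{fl}*}\mu_{p^k}=\varinjlim_m H^2_X(\mu_m^{\oplus r},\mu_{p^k})$ in group-scheme cohomology. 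The prime-to-$p$ factors contribute nothing to $p$-power coefficients for order reasons, reducing to $\varinjlim_s H^2_X(\mu_{p^s}^{\oplus r},\mu_{p^k})$; the crux is to prove this vanishes in characteristic $p$---in sharp contrast to the prime-to-$p$ case, where the same computation with an honest finite abelian group produces the nonzero exterior square $\bigwedge^2$. I expect this to require the explicit multiplicative-type structure of the $p$-power Kummer covers and a direct analysis of the degree-$2$ fppf cohomology of $\mu_{p^s}$ with $\mu_{p^k}$-coefficients, and to be the technical heart of the argument.
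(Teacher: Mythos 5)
Your parts (1), (2) and (4) track the paper closely---your Kummer-sequence derivation of (2) from the divisibility of $R^1\varepsilon_{\mr{fl}*}G$ is exactly the paper's proof of Theorem \ref{thm1.8}(2)---but the decisive gap is part (3), which you yourself call ``the technical heart'' and leave unproved; that is where the actual content of the theorem lies, and the reduction you propose points in a \emph{harder} direction than the paper's. The paper never computes $R^2\varepsilon_{\mr{fl}*}\mu_{p^k}$ via group-scheme cohomology with finite multiplicative coefficients. Instead it proves (Lemma \ref{lem1.6}) that for strictly Henselian local $X$ with residue characteristic $p$ one has $H^2_{\mr{kfl}}(X,G)=H^2_{\mr{k\acute{e}t}}(X,G)$, hence $p$-torsion-freeness of the stalk, with two inputs: Theorem \ref{thm1.7}, giving $H^2_{\mr{kfl}}(X,G)\cong\varinjlim_n\check{H}^2_{\mr{kfl}}(X_n/X,G)$ (whose proof, via Lemma \ref{lem1.5}, uses Gabber's theorem on Henselian pairs and the $H^1$-computation of Theorem \ref{thm1.4}); and the vanishing $\check{H}^2_{\mr{kfl}}(X_{p^r}/X,G)=0$, proved by reducing via the Key Lemma \ref{lem1.3} to a log point and identifying the \v{C}ech group with the Hochschild cohomology $H^2_{X_{\mr{fl}}}(H_{p^r},G)$, which classifies extensions of the diagonalizable group $H_{p^r}$ by the torus $G$ admitting a scheme-theoretic section; by \cite[Exp. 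XVII, Prop. 7.1.1]{sga3-2} such extensions are of multiplicative type, hence commutative, hence trivial over a separably closed field. The torus coefficients are essential here: your colimit formula $R^2\varepsilon_{\mr{fl}*}\mu_{p^k}=\varinjlim_m H^2_X(H_m,\mu_{p^k})$ presupposes the \v{C}ech-to-derived degeneration, and the machinery producing it (Lemmas \ref{lem1.2}, \ref{lem1.3}, Theorem \ref{thm1.7}) requires \emph{smooth} coefficients, so none of it applies to $\mu_{p^k}$. The paper's logical order is the reverse of yours: $p$-torsion-freeness of $H^2_{\mr{kfl}}(X,G)$ comes first, and the value of $H^2_{\mr{kfl}}(X,G[n])$ (Corollary \ref{cor1.9}) is \emph{deduced} from it. Note also that ``supported where $l$ is invertible'' concerns stalks at all points of residue characteristic $l$, including mixed-characteristic ones, so your reduction to schemes in characteristic $p$ is insufficient as stated; the paper covers this because the Key Lemma passes from a Henselian local base to its closed point.

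Two secondary problems. First, your transfer argument for torsionness in (1) does not go through as written: the underlying scheme morphism of a Kummer flat cover is finite but in general \emph{not} flat (already $\Spec\Z[P^{1/n}]\rightarrow\Spec\Z[P]$ fails to be flat for non-free fs $P$), so there is no finite locally free structure and no ready-made corestriction on $H^q_{\mr{kfl}}$; the paper instead gets torsionness from Lemma \ref{lem1.6}(3) and Corollary \ref{cor1.4}, where after comparison with Kummer \'etale cohomology the transfer happens inside honest finite-group cohomology $H^i(H_n(X),-)$. Second, in (4) you invoke ``kfl and k\'et compute the same cohomology on $n$-torsion sheaves for $n$ invertible''; the paper does not quote such a statement but derives the needed identity for these particular coefficients from Lemma \ref{lem1.6} together with the divisibility of $H^1_{\mr{kfl}}(X,G)$ (Corollary \ref{cor1.9}), and then produces the map in (4) by cup product before checking it is an isomorphism on stalks. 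These two points are likely repairable along the paper's lines, but together with the open part (3) your proposal does not yet constitute a proof.
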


When the underlying scheme of $X$ is a $\Q$-scheme or an $\F_q$-scheme for a finite field $\F_q$, Theorem \ref{thm0.3} can be strengthened to give an explicit description of $R^2\varepsilon_{\mr{fl}*}G$, see Corollary \ref{cor1.11} and Corollary \ref{cor1.12} respectively. 

Another case in which we have an explicit description of $R^2\varepsilon_{\mr{fl}*}G$ is related to log structure. Let $Y\in (\mr{fs}/X)$ be such that the ranks of the stalks of the \'etale sheaf $M_Y/\mc{O}_Y^{\times}$ are at most one, and let $(\mr{st}/Y)$ be the full subcategory of $(\mr{fs}/X)$ consisting of strict fs log schemes over $Y$. Then the restriction of $R^2\varepsilon_{\mr{fl}*}G$ to $(\mr{st}/Y)$ is zero, here $G$ is as in Theorem \ref{thm0.3}.

Theorem \ref{thm0.3} can also be slightly generalized to include a certain class of unipotent group schemes, see Theorem \ref{thm1.9}.

In Section \ref{sec2}, we investigate the higher direct image of Kato's logarithmic multiplicative group $\Gml$. The main result is the following theorem, see also Theorem \ref{thm2.1}.  

\begin{thm}\label{thm0.4}
Let $X$ be an fs log scheme with its underlying scheme locally  noetherian. Then we have:
\begin{enumerate}[(1)]
\item $R^r\varepsilon_{\mr{fl}*}\Gmlb=0$ (resp. $R^r\varepsilon_{\mr{\acute{e}t}*}\Gmlb=0$) for $r\geq 1$;
\item the canonical map $R^r\varepsilon_{\mr{fl}*}\Gm\rightarrow R^r\varepsilon_{\mr{fl}*}\Gml$ (resp. $R^r\varepsilon_{\mr{\acute{e}t}*}\Gm\rightarrow R^r\varepsilon_{\mr{\acute{e}t}*}\Gml$) is an isomorphism for $r\geq 2$.
\end{enumerate}
\end{thm}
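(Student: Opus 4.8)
The plan is to prove part (1) and then obtain part (2) as a formal consequence. Granting (1), I would apply $R\varepsilon_{\mr{fl}*}$ (resp. $R\varepsilon_{\mr{\acute{e}t}*}$) to the defining short exact sequence $0\to\Gm\to\Gml\to\Gmlb\to 0$ on $\Xkfl$ (resp. $\Xket$). For $r\geq 2$ both $R^{r-1}\varepsilon_{\mr{fl}*}\Gmlb$ and $R^{r}\varepsilon_{\mr{fl}*}\Gmlb$ vanish by (1), so the surrounding terms in the long exact sequence pinch off and yield an isomorphism $R^{r}\varepsilon_{\mr{fl}*}\Gm\xrightarrow{\sim}R^{r}\varepsilon_{\mr{fl}*}\Gml$; the étale case is identical. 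Thus the whole theorem reduces to the vanishing in (1).

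For (1) the first step is to show that $\Gmlb$ is uniquely divisible as a sheaf on $\Xkfl$ (resp. $\Xket$). Since $\Gml/\Gm$ has torsion-free values for fs log schemes, the $n$-torsion of $\Gml$ coincides with $\Gm[n]=\mu_n$, so $\Gmlb$ is torsion-free; conversely multiplication by $n$ is surjective on $\Gml$ over $\Xkfl$, because the $\mc O^\times$-part acquires an $n$-th root after a flat cover and the monoid part acquires one after a Kummer cover. Hence multiplication by $n$ is an isomorphism on the quotient $\Gmlb$, making $\Gmlb$ a sheaf of $\Q$-vector spaces; in particular each $R^r\varepsilon_{\mr{fl}*}\Gmlb$ is again uniquely divisible. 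It then suffices to prove that $R^r\varepsilon_{\mr{fl}*}\Gmlb$ is torsion for $r\geq 1$, since a uniquely divisible torsion sheaf is zero.

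To see the torsionness I would exploit that, Kummer-locally, every section of $\Gmlb$ is a fractional multiple of a section pulled back from the classical characteristic sheaf, i.e. $\Gmlb=\varinjlim_n\varepsilon_{\mr{fl}}^{*}(\Gml/\Gm)_{\Xfl}$ along the multiplication-by-$n$ maps. As $X$ is locally noetherian, $R^r\varepsilon_{\mr{fl}*}$ commutes with this filtered colimit, so $R^r\varepsilon_{\mr{fl}*}\Gmlb=\varinjlim_n R^r\varepsilon_{\mr{fl}*}\varepsilon_{\mr{fl}}^{*}(\Gml/\Gm)_{\Xfl}$. Because $(\Gml/\Gm)_{\Xfl}$ is étale-locally constant with finite free values, a projection-formula and local-constancy reduction expresses each term through $R^r\varepsilon_{\mr{fl}*}\Z$, which is torsion for $r\geq 1$ since it is computed by the cohomology of the (pro)finite log inertia governing $\varepsilon_{\mr{fl}}$, and the higher cohomology of a (pro)finite group with $\Z$-coefficients is torsion. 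A filtered colimit of torsion sheaves being torsion, this gives (1).

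The hard part is this last step in the flat setting. In the Kummer étale case it is the standard description of $R\varepsilon_{\mr{\acute{e}t}*}$ through the profinite log inertia built from $(\Gml/\Gm)_{\Xet}$ and $\widehat{\Z}(1)$, whose positive-degree $\Z$-cohomology is torsion. In the Kummer flat case the relevant covers are torsors under the finite flat Cartier duals $\mc{H}om((\Gml/\Gm)_{\Xfl},\mu_n)$, and I would need to argue that their fppf cohomology is annihilated by the order $n$ — for instance via the trace map attached to finite locally free covers — so that it dies after inverting $n$. A cleaner alternative, sidestepping $\Z$-coefficients altogether, is to observe that multiplication by $n$ is invertible on $\Gmlb$ while the contributions of the degree-$n$ Kummer covers to $R^r\varepsilon_{\mr{fl}*}$ are $n$-power torsion, which forces them to vanish directly.
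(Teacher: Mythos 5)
Your reduction of (2) to (1) via the long exact sequence for $0\to\Gm\to\Gml\to\Gmlb\to0$ is exactly what the paper does, and the Kummer \emph{flat} half of your first step is correct: on $\Xkfl$ the sheaf $\Gmlb$ is indeed uniquely divisible (torsion-freeness from the torsion-freeness of $M^{\mr{gp}}/\mc{O}^{\times}$ for fs log structures, divisibility from the Kummer covers). But the same claim on $\Xket$ is \emph{false} whenever some residue characteristic $p$ is not invertible on $X$: the degree-$p$ Kummer cover $X_p\to X$ is not Kummer \'etale, so the monoid part of a section cannot be divided by $p$ on the Kummer \'etale site, and $\Gmlb$ there has sections of the shape $(P^{1/n})^{\mr{gp}}\otimes_{\Z}\Q'$ with only prime-to-$p$ denominators --- this is precisely what the paper's Lemma \ref{lem2.1} records with the symbol $\Q'$. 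Consequently, in the \'etale branch your scheme ``uniquely divisible $+$ torsion $\Rightarrow 0$'' cannot kill the $p$-primary part; one needs the additional input that $R^r\varepsilon_{\mr{\acute{e}t}*}$ of any sheaf is $p$-torsion-free over a strictly Henselian local base with residue characteristic $p$ (the paper's Corollary \ref{cor1.4}, proved via the Galois covers $X_n$, $(n,p)=1$, and group cohomology of the finite groups $H_n(X)$). Without that, the argument does not close.

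In the flat branch, the torsionness step --- which you correctly identify as the hard part --- does not go through as sketched. First, $(\Gml/\Gm)_{\Xfl}$ is \emph{not} \'etale-locally constant (its stalk ranks jump along the log strata), so the projection-formula reduction to $R^r\varepsilon_{\mr{fl}*}\Z$ is unjustified. Second, the trace-map fallback fails structurally: the underlying scheme morphism of a Kummer flat cover $X_n\to X$ is finite but in general \emph{not} flat (only log flat, e.g.\ for non-free fs charts $P$), so there is no trace attached to a finite locally free morphism; and the $p$-part of the covering ``group'' is the connected diagonalizable group scheme $H_{p^t}$, not a finite abstract group, so ``contributions of degree-$n$ covers are $n$-power torsion'' is not a formal fact about group cohomology. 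What actually makes the paper's proof work is an explicit \v{C}ech computation (Lemma \ref{lem2.1}): writing $n=m\cdot p^t$ with $(m,p)=1$, the connected directions drop out because $\Gmlb(X_n\times_{\Spec\Z}H_{p^t}^r)=\Gmlb(X_n)$, and the \v{C}ech complex identifies with the standard complex of the finite group $H_m(X)$ acting trivially on the $\Q$-vector space $(P^{1/n})^{\mr{gp}}\otimes_{\Z}\Q$, whence all higher \v{C}ech cohomology vanishes outright; this is then promoted to $H^r_{\mr{kfl}}(X,\Gmlb)=0$ over strictly Henselian bases by an induction on $r$ using the spectral sequence, Niziol's factorization of an arbitrary Kummer flat cover through some $X_m$ followed by a classical flat cover (\cite[Cor.\ 2.16]{niz1}), and the vanishing $H^{r}_{\mr{fl}}(X_m,\varepsilon_{\mr{fl}*}\Gmlb)=H^{r}_{\mr{\acute{e}t}}(X_m,(\Gml/\Gm)_{X_{\mr{\acute{e}t}}}\otimes_{\Z}\Q)=0$. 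You would need to supply these two mechanisms (or genuine substitutes) --- the passage from standard covers to all covers, and the treatment of the connected diagonalizable part --- for your outline to become a proof.
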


By Theorem \ref{thm0.2}, we have $R^1\varepsilon_{\mr{fl}*}\Gm=
\Q/\Z\otimes_{\Z}(\Gml/\Gm)_{X_{\mr{fl}}}$. Theorem \ref{thm0.1} gives a description of $R^1\varepsilon_{\mr{\acute{e}t}*}\Gm$. In contrast to Theorem \ref{thm0.4} (2), the canonical maps $R^1\varepsilon_{\mr{fl}*}\Gm\rightarrow R^1\varepsilon_{\mr{fl}*}\Gml$ and $R^1\varepsilon_{\mr{\acute{e}t}*}\Gm\rightarrow R^1\varepsilon_{\mr{\acute{e}t}*}\Gml$ are not isomorphisms in general by the following theorem of Kato (see \cite[Cor. 5.2]{kat2}).

\begin{thm}[Kato]\label{thm0.5}
Let $X$ be an fs log scheme with its underlying scheme locally noetherian. Then we have $R^1\varepsilon_{\mr{fl}*}\Gml=0$ and $R^1\varepsilon_{\mr{\acute{e}t}*}\Gml=0$.
\end{thm}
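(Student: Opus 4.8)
The plan is to compute the sheafification directly. Since $R^1\varepsilon_{\mr{fl}*}\Gml$ is the sheaf on $\Xfl$ associated to the presheaf $U\mapsto H^1_{\mr{kfl}}(U,\Gml)$, and since it suffices to check the vanishing of such a sheafification on objects $U\in\fsX$ whose underlying scheme is strictly henselian local, I would reduce to showing $H^1_{\mr{kfl}}(U,\Gml)=0$ whenever $U$ is strictly henselian local and admits a chart $P\to M_U$ by a sharp fs monoid $P$ with $P^{\mr{gp}}\xrightarrow{\sim}(\Gml/\Gm)(U)\cong\Z^r$. By the structure theory of the Kummer flat topology, every kfl covering of such a $U$ is refined by a composite $W\to U_n\to U$, where $U_n=U\times_{\Spec\Z[P]}\Spec\Z[\tfrac1n P]$ is a standard Kummer covering and $W\to U_n$ is a classical flat covering. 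As $\mathrm{Pic}(U)=0$ and $U$ is strictly henselian, the classical flat part contributes nothing to the discrepancy measured by $\varepsilon_{\mr{fl}}$, and I am reduced to showing $\check H^1(U_n/U,\Gml)=0$ for all $n$.

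First I would exploit the torsor structure: $U_n\to U$ is a classical fppf torsor under the finite flat diagonalizable group scheme $G_n=\mc{H}om_X(\tfrac1n P^{\mr{gp}}/P^{\mr{gp}},\Gm)$. Hence $U_n\times_U U_n\cong U_n\times_X G_n$, and the Čech complex of $U_n/U$ becomes the bar complex of $G_n$ acting on $M^{\mr{gp}}(U_n)$, giving $\check H^i(U_n/U,\Gml)\cong H^i_X(G_n,M^{\mr{gp}}(U_n))$. The chart for $U_n$ provides the $G_n$-equivariant short exact sequence $0\to\mc{O}(U_n)^\times\to M^{\mr{gp}}(U_n)\to\tfrac1n P^{\mr{gp}}\to 0$, in which $G_n$ acts trivially on $\tfrac1n P^{\mr{gp}}=(\Gml/\Gm)(U_n)$ (the grading shifts the monomials only by units, so the action descends trivially to the quotient $\Gmlb$). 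The associated long exact sequence in $G_n$-cohomology reads
\[
\tfrac1n P^{\mr{gp}}\xrightarrow{\ \delta\ }H^1_X(G_n,\mc{O}(U_n)^\times)\to H^1_X(G_n,M^{\mr{gp}}(U_n))\to H^1_X(G_n,\tfrac1n P^{\mr{gp}}).
\]

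The right-hand term vanishes, since $H^1_X(G_n,\tfrac1n P^{\mr{gp}})=\mc{H}om_X(G_n,\Z^r)=0$: there are no nonzero homomorphisms from the finite group scheme $G_n$ to the constant torsion-free group $\Z^r$. Therefore $H^1_X(G_n,M^{\mr{gp}}(U_n))=\mathrm{coker}(\delta)$, and everything comes down to the surjectivity of the connecting map $\delta$. Unwinding its definition, $\delta(p)$ is represented by the $1$-cocycle $g\mapsto\chi_p(g)\in\mu_n\subset\mc{O}(U_n)^\times$, where $\chi_p$ is the character of $G_n$ attached to $p\bmod P^{\mr{gp}}$; since $G_n$ is diagonalizable with character group $\tfrac1n P^{\mr{gp}}/P^{\mr{gp}}$, the image of $\delta$ is precisely the set of classes represented by the characters $G_n\to\mu_n\hookrightarrow\mc{O}(U_n)^\times$.

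Thus the crux — and the step I expect to be the main obstacle — is the Hilbert $90$ type computation showing that $H^1_X(G_n,\mc{O}(U_n)^\times)$ is exhausted by these character classes, i.e. that $\delta$ is surjective. This is exactly the content of Kato's analysis in \cite[\S 5]{kat2}: one uses the grading $\mc{O}(U_n)=\bigoplus_{\chi}\mc{O}(U)_\chi$ by $\tfrac1n P^{\mr{gp}}/P^{\mr{gp}}$ together with the local triviality of $\mathrm{Pic}$ over $U$ to pin down $H^1_X(G_n,\Gm)$ and match it with the characters coming from the log structure. The following consistency check is reassuring: passing to the colimit over $n$, one identifies $\varinjlim_n H^1_X(G_n,\mc{O}(U_n)^\times)$ with $R^1\varepsilon_{\mr{fl}*}\Gm=\Q/\Z\otimes_{\Z}(\Gml/\Gm)_{\Xfl}$ of Theorem \ref{thm0.2}, and the surjectivity of $\delta$ says precisely that all of these classes originate from $\Gml/\Gm$, so that they cancel in $M^{\mr{gp}}=\Gml$. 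Granting the surjectivity, the cokernel vanishes, hence $\check H^1(U_n/U,\Gml)=0$ for all $n$, so $H^1_{\mr{kfl}}(U,\Gml)=0$ and $R^1\varepsilon_{\mr{fl}*}\Gml=0$. For the étale statement one runs the identical argument restricted to $n$ invertible on $X$, where $U_n\to U$ is a Kummer étale $G_n$-torsor with $G_n$ finite étale, yielding $R^1\varepsilon_{\mr{\acute{e}t}*}\Gml=0$.
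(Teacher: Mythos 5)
Your proposal should first be measured against what the paper actually does here: Theorem \ref{thm0.5} is not proved in the paper at all --- it is quoted as Kato's theorem, with the citation \cite[Cor. 5.2]{kat2} (the proof is reproduced in \cite[Thm. 3.20, Cor. 3.21]{niz1}). What you have written is a faithful reconstruction of the skeleton of that cited proof: the stalkwise reduction to a strictly henselian local base with a sharp chart, the refinement of an arbitrary Kummer flat cover through a standard Kummer cover $U_n$ followed by a classical flat cover (\cite[Cor. 2.16]{niz1}), the identification of $\check{H}^{\bullet}(U_n/U,\Gml)$ with the cohomology of the diagonalizable group $G_n$, the equivariant sequence $0\to\mc{O}(U_n)^{\times}\to M^{\mr{gp}}(U_n)\to\frac{1}{n}P^{\mr{gp}}\to 0$ with trivial action on the quotient, the vanishing of $H^1$ with constant torsion-free coefficients, and the computation that $\delta(p)$ is the character cocycle $\chi_p$. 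All of these steps are sound (one technical caution: when $p\mid n$ the group $G_n$ has an infinitesimal factor, so the coefficients in the bar complex must be the module functor $T\mapsto M^{\mr{gp}}(U_n\times_U T)$ in the sense of the paper's $H^i_X(G,F)$, not the abstract group $M^{\mr{gp}}(U_n)$; abstract group cohomology is only correct for the \'etale part $H_{n'}(X)$).

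The genuine gap is the one you flag yourself, and it cannot be outsourced: the surjectivity of $\delta$, i.e. that $H^1_X(G_n,\mc{O}(U_n)^{\times})$ is exhausted by the character classes, \emph{is} the logarithmic Hilbert 90 computation that constitutes the theorem. Citing \cite[\S 5]{kat2} at that point makes the argument circular as a proof of the statement, since \cite[Cor. 5.2]{kat2} is deduced from precisely that analysis; all the real work --- decomposing $\mc{O}(U_n)$ by the $\Gamma_n$-grading, normalizing unit cocycles using henselianness, and treating the infinitesimal part of $G_n$ scheme-theoretically --- lives in the step you skip. A secondary, easily repaired omission: ``the classical flat part contributes nothing'' requires $H^1_{\mr{fl}}(U_n,\Gml)=0$ for \emph{every} $n$ (via $\mr{Pic}(U_n)=0$ and the vanishing of $H^1$ of the characteristic sheaf on a strictly henselian local base), not merely $\mr{Pic}(U)=0$. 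Your consistency check against Theorem \ref{thm0.2} is a reasonable sanity test but not a substitute for the surjectivity. In summary: correct architecture, identical in outline to the Kato--Nizio{\l} proof the paper cites, but incomplete exactly at the theorem's core.
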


In Section \ref{sec3}, we apply the previous results to compute $H^i_{\mr{kfl}}(X,\Gm)$ for $i=1,2$ in the following two cases.
\begin{enumerate}[(1)]
\item $R$ is a discrete valuation ring with finite residue field, and $X=\Spec R$ endowed with the canonical log structure associated to its closed point.
\item $K$ is either a number field or a function field. When $K$ is a number field, $X$ is the spectrum of the ring of integers of $K$. When $K$ is a function field, $X$ is the unique smooth projective curve associated to $K$. Let $S$ be a finite set of closed points of $X$, $U:=X-S$, and $j:U\hookrightarrow X$. We endow $X$ with the log structure $j_*\mc{O}_U^{\times}\cap\mc{O}_X\rightarrow\mc{O}_X$. 
\end{enumerate}

\subsection*{History}
As we have stated in the beginning, Kummer flat topology is introduced by K. Kato in \cite{kat2}. The comparison between the Kummer flat topology and the classical flat topology is initiated in loc. cit.. In fact our Theorem \ref{thm0.2} is a generalization of \cite[Thm. 4.1]{kat2} from smooth affine commutative group schemes to smooth commutative group schemes. Theorem \ref{thm0.5} is also from \cite[Cor. 5.2]{kat2}. We would like to point out that \cite{kat2} was started around 1991 and has been circulated as an incomplete preprint for a long time until 2019. The comparison results in loc. cit. has been reproduced by Niziol in \cite{niz1} with proofs. Besides \cite{kat2}, the only existing comparison theorem between the Kummer log topologies and the classical topologies is the following one from \cite{k-n1}.

\begin{thm}[Kato-Nakayama]\label{thm0.6}
Let $X$ be an fs log scheme, and let $F$ be a sheaf of abelian groups on $(\mr{fs}/X)_{\mr{\acute{e}t}}$ such that 
$$F=\bigcup_{n\colon\text{invertible on $X$}}\mr{Ker}(F\xrightarrow{n}F).$$
Then the cup-product induces an isomorphism
$$F(-q)\otimes_{\Z}\bigwedge^q(\Gml/\Gm)_{X_{\mr{\acute{e}t}}}\rightarrow R^q\varepsilon_{\mr{\acute{e}t}*}\varepsilon_{\mr{\acute{e}t}}^*F$$
for any $q\geq 0$, where $(-q)$ denotes the Tate twist.
\end{thm}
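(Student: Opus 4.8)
The plan is to verify that the cup-product map is an isomorphism of sheaves on $\Xet$ by checking it on stalks at geometric points $\bar{x}$ of the underlying scheme, which may be done after replacing $X$ by a suitable \'etale neighbourhood of $\bar{x}$. First I would reduce to the case $F=\mr{Ker}(F\xrightarrow{n}F)=:F[n]$ for a single integer $n$ invertible on $X$: the hypothesis gives $F=\varinjlim_n F[n]$ over such $n$, and since $\varepsilon_{\mr{\acute{e}t}}^*$, the functors $R^q\varepsilon_{\mr{\acute{e}t}*}$, the Tate twist, and $(-)\otimes_{\Z}\bigwedge^q(\Gml/\Gm)_{\Xet}$ all commute with this filtered colimit, it suffices to treat each $F[n]$ and pass to the limit.

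Next I would bring in the local structure of the Kummer \'etale topology. \'Etale-locally on $X$ there is a chart $P\to M_X$ by a fine saturated sharp monoid $P$, and a cofinal system of Kummer \'etale covers of $X$ is provided by the base changes along the Kummer homomorphisms $P\hookrightarrow\tfrac{1}{m}P$ for $m$ invertible on $X$. For such $m$ the resulting cover is finite Galois, with group canonically identified with $\mr{Hom}\big((\Gml/\Gm)_{\bar{x}},\mu_m\big)$, where $(\Gml/\Gm)_{\bar{x}}\cong\Z^r$ is the stalk of the characteristic sheaf at $\bar{x}$. Passing to the limit over all such $m$ yields the tame logarithmic inertia group
$$\pi=\mr{Hom}\big((\Gml/\Gm)_{\bar{x}},\,\widehat{\Z}'(1)\big)\cong\widehat{\Z}'(1)^{\oplus r},\qquad \widehat{\Z}'(1)=\varprojlim_{m}\mu_m,$$
the limit running over $m$ invertible on $X$.

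I would then identify the stalk of $R^q\varepsilon_{\mr{\acute{e}t}*}\varepsilon_{\mr{\acute{e}t}}^*F$ at $\bar{x}$ with the continuous group cohomology $H^q(\pi,F_{\bar{x}})$ for the trivial action of $\pi$ on $F_{\bar{x}}$, by expressing the stalk as the colimit of Kummer \'etale cohomology over the \'etale neighbourhoods and the cofinal tower above. This cohomology is computed by K\"unneth: a single factor $\widehat{\Z}'(1)$ contributes $H^0=F_{\bar{x}}$ and $H^1=\mr{Hom}(\widehat{\Z}'(1),F_{\bar{x}})=F_{\bar{x}}(-1)$ with no higher cohomology, so over the $r$ factors one obtains the exterior algebra and
$$H^q(\pi,F_{\bar{x}})\cong F_{\bar{x}}(-q)\otimes_{\Z}\bigwedge^q(\Gml/\Gm)_{\bar{x}},$$
matching the left-hand side of the asserted isomorphism on stalks.

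Finally I would check that the cup-product map of the statement induces exactly this identification. The degree-one generators come from the canonical map $(\Gml/\Gm)\to R^1\varepsilon_{\mr{\acute{e}t}*}\widehat{\Z}'(1)$ sending a local section of $M_X^{\mr{gp}}$ to the class of the $\mu_m$-torsor of its $m$-th roots; under the stalk identification these correspond to the degree-one classes generating $H^*(\pi,-)$, and the cup product on the left matches the product in continuous group cohomology, so the map is an isomorphism stalkwise and hence globally. The main obstacle is the content of the second and third paragraphs: establishing the cofinality of the standard Kummer covers, the precise Tate-twisted identification of their Galois groups with $\mr{Hom}((\Gml/\Gm)_{\bar{x}},\mu_m)$, and the comparison of the stalk of the higher direct image with continuous cohomology of $\pi$. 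Tracking the Tate twist so that $\widehat{\Z}'(1)$ (and not an abstract $\widehat{\Z}'$) appears, and verifying the compatibility of the two cup products, are the delicate points; the remaining K\"unneth computation is then formal.
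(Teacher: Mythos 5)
The paper does not prove this theorem itself --- it is imported from Kato--Nakayama \cite{k-n1} --- and your sketch reconstructs essentially the standard argument from that source: reduction to $n$-torsion and to stalks at a strictly henselian log point, cofinality of the standard Kummer covers along $P\hookrightarrow P^{1/m}$ with Galois group $\mr{Hom}((\Gml/\Gm)_{\bar{x}},\mu_m)$, identification of the stalk of $R^q\varepsilon_{\mr{\acute{e}t}*}\varepsilon_{\mr{\acute{e}t}}^*F$ with continuous cohomology of the tame logarithmic inertia $\widehat{\Z}'(1)^{\oplus r}$ acting trivially, and the K\"unneth/exterior-algebra computation matched via cup product. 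The delicate points you flag are genuine but standard, and the first two (cofinality of the standard covers and the \v{C}ech-to-derived comparison yielding group cohomology) are exactly the mechanisms this paper develops independently in Corollary \ref{cor1.2} and Proposition \ref{prop1.2} for its own strictly henselian computations, so your proposal is sound and consistent with the known proof.
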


\subsection*{Ideas of the proofs}
Like what happens very often in cohomological comparison, \v{C}ech cohomology is a key tool for the proofs of the comparison results in this article.

For Theorem \ref{thm0.1} (resp. Theorem \ref{thm0.3}), we first prove that $R^i\varepsilon_{\mr{\acute{e}t}*}G$ (resp. $R^2\varepsilon_{\mr{fl}*}G$) is torsion and $R^{i-1}\varepsilon_{\mr{\acute{e}t}*}G$ (resp. $R^1\varepsilon_{\mr{fl}*}G$) is divisible. If $n$ is invertible on the base (resp. $n$ is arbitrary), we have a short exact sequence $0\to G[n]\to G\xrightarrow{n}G\to0$ on $(\mr{fs}/X)_{\mr{k\acute{e}t}}$ (resp. $(\mr{fs}/X)_{\mr{kfl}}$). And this short exact sequence further gives rise to a short exact sequence 
\begin{align*}
0\to R^{i-1}\varepsilon_{\mr{\acute{e}t}*}G\otimes_{\Z}\Z/n\Z \to R^i\varepsilon_{\mr{\acute{e}t}*}G[n]\to (R^i\varepsilon_{\mr{\acute{e}t}*}G)[n]\to0  \\
\text{(resp.  $0\to R^{1}\varepsilon_{\mr{fl*}}G\otimes_{\Z}\Z/n\Z \to R^2\varepsilon_{\mr{fl*}}G[n]\to (R^2\varepsilon_{\mr{fl}*}G)[n]\to0$).}
\end{align*}
Then we are reduced to investigate $R^i\varepsilon_{\mr{\acute{e}t}*}G[n]$ (resp. $R^2\varepsilon_{\mr{fl}*}G[n]$). Clearly the sheaf $R^i\varepsilon_{\mr{\acute{e}t}*}G[n]$ can be explicitly described via Theorem \ref{thm0.6}. And actually the sheaf $R^1\varepsilon_{\mr{fl}*}G[n]$ can also be made well-understood via Theorem \ref{thm0.6} with some  effort, see Corollary \ref{cor1.9}. Part (2) of Theorem \ref{thm0.1} and part (3) of Theorem \ref{thm0.3} are reduced to local computations, see Corollary \ref{cor1.4} and Lemma \ref{lem1.6} (3).

The proof of Theorem \ref{thm0.2} is a modification of that of \cite[Thm. 4.1]{kat2} which is the affine case of Theorem \ref{thm0.2}. The author loc. cit. first constructs a canonical homomorphism
\begin{equation}\label{eq0.3}
\varinjlim_{n}\mc{H}om_{X}(\Z/n\Z(1),G)\otimes_{\Z}(\Gml/\Gm)_{X_{\mr{fl}}}\to R^1\varepsilon_{\mr{fl}*} G.
\end{equation}
Then it suffices to show that this homomorphism is actually an isomorphism. One is reduced to check this for $X=\Spec A$ such that $A$ is a noetherian strictly Henselian local ring and $X$ admits a chart $P\to M_X$ with $P$ an fs monoid such that the canonical map $P\xrightarrow{\cong}M_{X,x}/\mc{O}_{X,x}^\times$ is an isomorphism, where $x$ denotes the closed point of $X$. Therefore one is led to the computation of $H_{\mr{kfl}}^1(X,G)$ in this case. For any positive integer $m$, we define
\begin{equation}\label{eq0.4}
X_m:=X\times_{\Spec\Z[P]}\Spec\Z[P^{1/m}]
\end{equation}
endowed with the canonical log structure coming from $P^{1/m}$, where $P^{1/m}$ is a monoid endowed with a homomorphism $P\rightarrow P^{1/m}$ which can be identified with $P\xrightarrow{m}P$. We get a Kummer flat cover $X_m\rightarrow X$, which is also a Kummer \'etale cover if $m$ is coprime to the characteristic of the residue field of $A$. It is shown in \cite[Lem. 4.6]{kat2} and its proof that 
$$H_{\mr{kfl}}^1(X,G)=\check{H}^1_{\mr{kfl}}(X,G)=\varinjlim_m\check{H}^1_{\mr{kfl}}(X_m/X,G)=\varinjlim_mH^1(C_{G,m}),$$
where $C_{G,m}$ is the \v{C}ech complex for the cover $X_m\to X$ with coefficients in $G$. In order to show that (\ref{eq0.3}) is an isomorphism, one is reduced to show that
\begin{equation}\label{eq0.5}
\mr{Hom}_{X}(\Z/n\Z(1),G)\otimes_{\Z}P^{\mr{gp}}\to H^1(C_{G,n})
\end{equation}
is an isomorphism for each positive integer $n$, see \cite[\S 4.9]{kat2} for the details of the reduction. For this, the author first deal with the case that $A$ is complete in \cite[Prop. 4.10]{kat2}. And the proof of \cite[Prop. 4.10]{kat2} is first given in the artinian case in \cite[\S 4.11]{kat2}, then achieved in general by passing to limit in \cite[\S 4.13]{kat2}. The proof of (\ref{eq0.5}) in the general case is reduced to the complete case by descent which requires the condition that $G$ is affine. In this article, we make the following key lemma (see also Lemma \ref{lem1.3}), which reduces the computation of $\check{H}^1_{\mr{kfl}}(X_m/X,G)$ directly to the artinian case. Therefore we can remove the condition that $G$ is affine.
\begin{lem}[Key Lemma]
Let $X$ be an fs log scheme whose underlying scheme is $\Spec A$ with $A$ a noetherian strictly Henselian local ring, and let $x$ be the closed point of $X$. Let $P\to M_X$ be a chart of $X$ with $P$ an fs monoid such that the induced map $P\xrightarrow{\cong}M_{X,x}/\mc{O}_{X,x}^\times$ is an isomorphism. Let $X_m$ be as constructed in (\ref{eq0.4}). We regard $x$ as an fs log scheme with respect to the induced log structure, and $x_m:=X_m\times_Xx$. Let ? be either kfl or k\'et, then the canonical map 
$$\check{H}_{?}^i(X_m/X,G)\rightarrow \check{H}_{?}^i(x_m/x,G)$$
is an isomorphism for all $i>0$.
\end{lem}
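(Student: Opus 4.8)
The plan is to compute both sides by the explicit \v{C}ech complexes of the two covers and to compare them term by term through the closed fibre. First I would record the torsor structure: the cover $X_m\to X$ is a torsor, in the topology $?$, under the diagonalizable (finite flat, and \'etale when $m$ is invertible) group scheme $\Gamma_m:=\mc{H}om(P^{\mr{gp}}/mP^{\mr{gp}},\Gm)$, and, since $x$ carries the induced log structure with the \emph{same} chart $P$, the cover $x_m\to x$ is a torsor under the very same $\Gamma_m$. Hence the iterated fs fibre products linearize: the $(i+1)$-fold product $X_m\times_X\cdots\times_X X_m$ is isomorphic to the product of $X_m$ with $i$ copies of $\Gamma_m$ (the torsor trivialises over itself), and likewise over $x$. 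Writing $B_i:=\Gamma(X_m\times_X\cdots\times_X X_m,\mc{O})$, the map in the statement is induced by the reductions $B_i\to B_i\otimes_A k$, where $k$ is the separably closed residue field of $A$; each $B_i$ is finite and flat over $A$, since $\Z[P]\to\Z[P^{1/m}]$ is finite flat and the products are taken against $\Gamma_m$. I would first treat $G$ smooth, deducing the finite flat case afterwards by choosing a resolution $0\to G\to G_0\to G_1\to 0$ with $G_0,G_1$ smooth affine and chasing the associated long exact sequences.

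Next comes the core point. Since $(B_i,\mathfrak{m}B_i)$ is a Henselian pair and $G$ is smooth, $G(B_i)\to G(B_i\otimes_A k)$ is surjective, so the \v{C}ech complex $C^\bullet(X_m/X,G)$ surjects onto $C^\bullet(x_m/x,G)$; let $K^\bullet$ be the kernel complex, $K^i=\ker\big(G(B_i)\to G(B_i\otimes_A k)\big)$. It then suffices to prove $H^i(K^\bullet)=0$ for $i\geq 1$. I would filter $K^\bullet$ by $F^jK^i=\ker\big(G(B_i)\to G(B_i/\mathfrak{m}^jB_i)\big)$, so that $F^1K^i=K^i$. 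For smooth $G$ the successive quotients of this congruence filtration are $\mr{Lie}(G)\otimes_A(\mathfrak{m}^jB_i/\mathfrak{m}^{j+1}B_i)$, and flatness of $B_i$ over $A$ identifies this with $\mr{Lie}(G)\otimes_k(\mathfrak{m}^j/\mathfrak{m}^{j+1})\otimes_k(B_i\otimes_A k)$. As a complex in $i$ this is a fixed $k$-vector space tensored with the \v{C}ech complex of $x_m\to x$ with coefficients in the structure sheaf. The decisive observation is that this last complex is acyclic in positive degrees: over the field $k$ the cover $x_m\to x$ is faithfully flat, so quasi-coherent \v{C}ech cohomology vanishes above degree $0$ (equivalently, the coefficients are a linear representation of the diagonalizable, hence linearly reductive, group scheme $\Gamma_m$). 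Thus every graded piece of $K^\bullet$ is acyclic in positive degrees.

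Finally I would assemble. When $A$ is artinian $\mathfrak{m}$ is nilpotent, the filtration on $K^\bullet$ is finite, and acyclicity of the graded pieces forces $H^i(K^\bullet)=0$ for $i\geq1$ at once; the long exact sequence attached to $0\to K^\bullet\to C^\bullet(X_m/X,G)\to C^\bullet(x_m/x,G)\to 0$ then yields the claimed isomorphism for all $i>0$. The main obstacle is the passage to a general noetherian strictly Henselian $A$, where the $\mathfrak{m}$-adic filtration is only exhaustive and separated, not complete, so one cannot simply sum the successive corrections furnished by the acyclic graded pieces. I would resolve this by reducing to the artinian case: apply the artinian result to each $A/\mathfrak{m}^j$ (for which the left-hand groups are all canonically $\check{H}^i(x_m/x,G)$, independently of $j$), deduce the complete case from the Mittag--Leffler property of the surjective tower $\{G(B_i\otimes_A A/\mathfrak{m}^j)\}_j$ attached to smooth affine $G$, and finally compare $A$ with its completion $\hat{A}$, using that $B_i$ and $\hat{B}_i$ share the same associated graded so that the comparison map is an isomorphism on the ($|\Gamma_m|$-torsion) cohomology in positive degrees. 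This completion comparison, rather than the linear-reductive vanishing, is the technically delicate step.
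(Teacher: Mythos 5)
Your infinitesimal computation is correct, and it is in substance the same one the paper makes: the congruence kernel of the \v{C}ech complex along a square-zero (or graded $\mathfrak{m}$-adic) thickening is $\mr{Lie}(G)$ tensored with a quasi-coherent module regarded as a representation of the diagonalizable group $H_m=\Spec\Z[(P^{1/m})^{\mr{gp}}/P^{\mr{gp}}]$, and linear reductivity (\cite[Exp. I, Thm. 5.3.3]{sga3-1}) kills its positive cohomology; this settles the artinian case exactly as you say. The genuine gap is the passage from artinian to general noetherian strictly Henselian $A$, which you correctly flag as the delicate step but do not close. Concretely: (i) your Mittag--Leffler step needs $G(\hat{B}_i)=\varprojlim_j G(B_i/\mathfrak{m}^jB_i)$, which you only have for affine $G$ (for a general smooth separated group scheme the surjectivity of $G(\hat{B}_i)\to\varprojlim_j G(B_i/\mathfrak{m}^jB_i)$ is an algebraization problem, solvable for proper $G$ by Grothendieck existence but not in general) --- yet removing affineness is the entire point of this lemma; (ii) the final comparison of $A$ with $\hat{A}$ is asserted via ``same associated graded,'' but acyclicity of the graded pieces does not control the cohomology of a filtered complex whose filtration is not complete --- this is precisely the obstruction you yourself identified one step earlier, so the argument is circular at its crux. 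Nor can you invoke Artin approximation, since a noetherian strictly Henselian local ring need not satisfy it; Kato's original route through $\hat{A}$ used fppf descent along $A\to\hat{A}$, which is exactly what forced the affineness hypothesis in \cite[Thm. 4.1]{kat2}.

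The paper closes this gap by avoiding limits and completions altogether: it repackages your graded computation as a smoothness statement and then uses Henselianness directly. The \v{C}ech functors $\underline{C}^i(G)$ are representable by group algebraic spaces (Weil restriction along the finite flat $X_m\times_{\Spec\Z}H_m^i\to X$), and your vanishing of the congruence-kernel \v{C}ech cohomology is precisely the infinitesimal lifting criterion showing that $d^{i-1}\colon\underline{C}^{i-1}(G)\to\underline{Z}^i(G)$ is a smooth morphism (Lemma \ref{lem1.2}). Given a cocycle $z$ over $X$ vanishing on the closed fibre, $(d^{i-1})^{-1}(z)$ is then smooth over $X=\Spec A$ with a section over $x$ (the zero section), and Hensel's lemma lifts it to a section over $X$; termwise surjectivity of $C^i(X_m/X,G)\to C^i(x_m/x,G)$ likewise follows from smoothness of $G$ and the fact that $X_m\times_{\Spec\Z}H_m^i$ is a disjoint union of spectra of Henselian local rings. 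If you want to salvage your route, this is the repair: smoothness of the coboundary onto the cocycle functor is the correct completion-free packaging of your acyclic graded pieces. (A secondary issue: your proposed extension to finite flat $G$ via a resolution $0\to G\to G_0\to G_1\to 0$ does not yield an exact sequence of \v{C}ech complexes, since $G_0(B_i)\to G_1(B_i)$ need not be surjective, e.g. $H^1_{\mr{fl}}(B_i,\mu_p)=B_i^\times/(B_i^\times)^p\neq0$ in characteristic $p$; but the lemma is only stated and needed for smooth $G$.)
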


At last, we briefly discuss the proof of Theorem \ref{thm0.4}. The second part of Theorem \ref{thm0.4} clearly follows from the first part. And part (1), i.e. the vanishings of $R^r\varepsilon_{\mr{fl}*}\Gmlb$ and $R^r\varepsilon_{\mr{\acute{e}t}*}\Gmlb$ for $r\geq 1$, follow from the vanishings of $H_{\mr{kfl}}^r(X,\Gmlb)$ and $H_{\mr{k\acute{e}t}}^r(X,\Gmlb)$ with $X$ having its underlying scheme the spectrum of a noetherian strictly Henselian local ring. The latter are given by explicit computations of \v{C}ech cohomology, see Theorem \ref{thm2.1} and Lemma \ref{lem2.1}.

\section{The higher direct images for smooth commutative group schemes}\label{sec1}
In this section, we investigate the higher direct images under $\varepsilon_{\mr{fl}}$ and $\varepsilon_{\mr{\acute{e}t}}$ for smooth commutative group schemes. We deal with the case of $\varepsilon_{\mr{\acute{e}t}}$ in the first subsection, and with the case of $\varepsilon_{\mr{fl}}$ in the second subsection. In the Kummer \'etale case, we have more tools at hand and can get results for $R^i\varepsilon_{\mr{\acute{e}t}*}G$ for $i\geq2$ and $G$ a smooth commutative group scheme with connected fibres over the base. In the Kummer flat case, we  have only results for $R^2\varepsilon_{\mr{fl}*}$ for certain smooth commutative group schemes.
\subsection{Kummer \'etale case}\label{subsec1.1}
In this subsection, we study the higher direct image $R^i\varepsilon_{\mr{\acute{e}t}*} G$ along the forgetful map $\varepsilon_{\mr{\acute{e}t}}:(\mr{fs}/X)_{\mr{k\acute{e}t}}\rightarrow (\mr{fs}/X)_{\mr{\acute{e}t}}$, where $i\geq2$ and $G$ is a smooth commutative group scheme with connected fibres over $X$ regarded as a sheaf on $(\mr{fs}/X)_{\mr{k\acute{e}t}}$. 

In order to understand $R^i\varepsilon_{\mr{\acute{e}t}*}G$, we need to compute $H^i_{\mr{k\acute{e}t}}(X,G)$ first, for the case that $X$ has its underlying scheme $\Spec A$ with $A$ a noetherian strictly Henselian ring. We will make heavy use of \v{C}ech cohomology, for which we often refer to \cite[Chap. III]{mil1}. 

The following proposition is an analogue of \cite[Chap. III, Prop. 2.9]{mil1}.
\begin{prop}\label{prop1.1}
Let ? be either fl or \'et. Let $X$ be an fs log scheme, and $F$ a sheaf on $(\mr{fs}/X)_{\mr{k}?}$. Let $\underline{H}_{\mr{k}?}^i(F)$ be the presheaf $U\mapsto H_{\mr{k}?}^i(U,F)$ for $U\in (\mr{fs}/X)$. Then we have that the 0-th \v{C}ech cohomology group $\check{H}^0_{\mr{k}?}(U,\underline{H}_{\mr{k}?}^i(F))$ vanishes for $i>0$ and all $U\in(\mr{fs}/X)$. 
\end{prop}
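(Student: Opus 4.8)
The plan is to follow the classical argument behind \cite[Chap. III, Prop. 2.9]{mil1}, combining the description of the zeroth \v{C}ech cohomology as a filtered colimit over coverings with the local triviality of higher derived-functor cohomology. Throughout, let ? be fl or \'et and abbreviate $\mc{P}:=\underline{H}^i_{\mr{k}?}(F)$.

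First I would recall, from the formalism of \v{C}ech cohomology on a site in \cite[Chap. III]{mil1}, that for any presheaf $\mc{P}$ on $(\mr{fs}/X)_{\mr{k}?}$ one has
$$\check{H}^0_{\mr{k}?}(U,\mc{P})=\varinjlim_{\mc{U}}\check{H}^0(\mc{U},\mc{P}),$$
the colimit being taken over the filtered system of Kummer ($?$) coverings $\mc{U}=\{U_j\to U\}_j$ of $U$, ordered by refinement (any two coverings admit a common refinement via fibre products, and by \cite[Chap. III]{mil1} the induced maps on \v{C}ech $H^0$ do not depend on the choice of refinement data). An element of $\check{H}^0(\mc{U},\mc{P})$ is a family $(s_j)_j$ with $s_j\in\mc{P}(U_j)$ agreeing after restriction to the fibre products $U_j\times_U U_{j'}$. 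Applying this with $\mc{P}=\underline{H}^i_{\mr{k}?}(F)$, each $s_j$ is a cohomology class in $H^i_{\mr{k}?}(U_j,F)$.

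The key input is the local triviality of higher cohomology: for $i>0$ and any $\xi\in H^i_{\mr{k}?}(V,F)$, there is a Kummer ($?$) covering $\{W_k\to V\}_k$ with $\xi|_{W_k}=0$ for every $k$. This is the standard fact that the sheaf associated to $\underline{H}^i_{\mr{k}?}(F)$ vanishes for $i>0$; it is proved by embedding $F$ into an injective sheaf and dimension-shifting, and, being a purely formal property of a site, it applies verbatim to both $(\mr{fs}/X)_{\mr{kfl}}$ and $(\mr{fs}/X)_{\mr{k\acute{e}t}}$. Granting this, I would argue as follows. Represent a given class in $\check{H}^0_{\mr{k}?}(U,\mc{P})$ by a family $(s_j)_j$ over some covering $\mc{U}$. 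Applying local triviality to each $s_j$ yields coverings $\{V_{jk}\to U_j\}_k$ with $s_j|_{V_{jk}}=0$; the composite $\mc{V}=\{V_{jk}\to U\}_{j,k}$ is a Kummer ($?$) covering refining $\mc{U}$ (coverings are stable under composition), and under the refinement map $\check{H}^0(\mc{U},\mc{P})\to\check{H}^0(\mc{V},\mc{P})$ the family $(s_j)_j$ is sent to $(s_j|_{V_{jk}})_{j,k}=0$. Hence every class dies in the colimit, and $\check{H}^0_{\mr{k}?}(U,\mc{P})=0$.

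The only point requiring care, and the step I expect to be the main (albeit mild) obstacle, is checking that the \v{C}ech formalism of \cite[Chap. III]{mil1} transports without change to the Kummer sites: one must confirm that $(\mr{fs}/X)_{\mr{k}?}$ admits the fibre products needed to build \v{C}ech complexes, that its coverings are stable under composition and refinement so the colimit above is genuinely filtered, and that the resulting maps on \v{C}ech $H^0$ are independent of the refinement data. All of these are part of the standard construction of the Kummer flat and Kummer \'etale sites (cf.\ \cite[Def. 2.3]{kat2}, \cite[\S 2.1]{niz1}), so no genuinely new difficulty arises.
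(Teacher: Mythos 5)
Your proposal is correct and coincides with the paper's own argument: the paper's proof of this proposition consists of the single remark that the proof of \cite[Chap. III, Prop. 2.9]{mil1} is purely formal and works verbatim on the Kummer sites, and what you have written out --- $\check{H}^0_{\mr{k}?}(U,\mc{P})$ as a filtered colimit over Kummer coverings, local triviality of higher derived-functor classes via embedding into an injective and dimension shifting, and killing a compatible family by passing to a common refinement, plus the routine checks that the Kummer sites admit fibre products and composition-stable coverings --- is precisely that formal proof made explicit.
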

\begin{proof}
The proof of \cite[Chap. III, Prop. 2.9]{mil1} is purely formal, and it works also here.
\end{proof}

The following corollary is an analogue of \cite[Chap. III, Cor. 2.10]{mil1}.

\begin{cor}\label{cor1.1}
Let ?, $X$, $F$ and $U$ be as in Proposition \ref{prop1.1}. Then the \v{C}ech cohomology to derived functor cohomology spectral sequence 
$$\check{H}_{\mr{k}?}^i(U,\underline{H}_{\mr{k}?}^j(F))\Rightarrow H^{i+j}_{\mr{k}?}(U,F)$$
induces isomorphisms 
$$\check{H}_{\mr{k}?}^i(U,F)\xrightarrow{\cong}H_{\mr{k}?}^i(U,F)$$
for $i=0,1$, and an exact sequence
$$0\rightarrow \check{H}_{\mr{k}?}^2(U,F)\rightarrow H_{\mr{k}?}^2(U,F)\rightarrow \check{H}_{\mr{k}?}^1(U,\underline{H}_{\mr{k}?}^1(F))\rightarrow \check{H}_{\mr{k}?}^3(U,F)\rightarrow H_{\mr{k}?}^3(U,F).$$
\end{cor}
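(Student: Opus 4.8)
The plan is to derive everything formally from the stated spectral sequence, using Proposition \ref{prop1.1} as the one substantive input; the argument is the exact analogue of Milne's \cite[Chap. III, Cor. 2.10]{mil1}. Writing $E_2^{p,q}=\check{H}_{\mr{k}?}^p(U,\underline{H}_{\mr{k}?}^q(F))$, I would first record the shape of the $E_2$-page. The spectral sequence is first-quadrant. Along the bottom row the presheaf $\underline{H}_{\mr{k}?}^0(F)$ is just $F$, so $E_2^{p,0}=\check{H}_{\mr{k}?}^p(U,F)$; along the left-hand edge Proposition \ref{prop1.1} gives the vanishing $E_2^{0,q}=0$ for every $q>0$. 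These two observations are the only facts about the $E_2$-page that the proof uses.

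Next I would extract the pieces of the abutment $H^n:=H_{\mr{k}?}^n(U,F)$ in total degrees $n\le 3$ from its filtration. For $n=0,1$ every graded piece other than $E_\infty^{n,0}$ sits in the vanishing left column, while $E_2^{n,0}$ is touched by no differential (for $n\le 1$ the outgoing $d_2$ lands below the first quadrant and the incoming one starts in the left column), so $E_\infty^{n,0}=E_2^{n,0}$ and the edge map $\check{H}_{\mr{k}?}^n(U,F)\to H^n$ is an isomorphism. For $n=2$ the piece $E_\infty^{0,2}$ vanishes; the term $E_\infty^{2,0}$ equals $E_2^{2,0}=\check{H}_{\mr{k}?}^2(U,F)$ because its only possible incoming differential $d_2\colon E_2^{0,1}\to E_2^{2,0}$ has zero source; and the middle term is $E_\infty^{1,1}=\ker\bigl(d_2\colon E_2^{1,1}\to E_2^{3,0}\bigr)$, the incoming $d_2$ again originating in the left column. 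The filtration on $H^2$ therefore reads $0\to\check{H}_{\mr{k}?}^2(U,F)\to H^2\to\ker(d_2^{1,1})\to 0$, where $d_2^{1,1}$ denotes $d_2\colon \check{H}_{\mr{k}?}^1(U,\underline{H}_{\mr{k}?}^1(F))\to\check{H}_{\mr{k}?}^3(U,F)$.

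Finally I would splice in the behaviour of $\check{H}_{\mr{k}?}^3(U,F)=E_2^{3,0}$. Its only incoming differential is $d_2^{1,1}$ and no higher differential reaches it (the potential source $E_3^{0,2}$ is a subquotient of the vanishing $E_2^{0,2}$), so $E_\infty^{3,0}=\check{H}_{\mr{k}?}^3(U,F)/\mathrm{im}(d_2^{1,1})$, and the edge map $\check{H}_{\mr{k}?}^3(U,F)\to H^3$---being the projection onto this bottom graded piece followed by the inclusion $E_\infty^{3,0}\hookrightarrow H^3$---has kernel precisely $\mathrm{im}(d_2^{1,1})$. Concatenating the surjection $H^2\twoheadrightarrow\ker(d_2^{1,1})$ with the inclusion $\ker(d_2^{1,1})\hookrightarrow E_2^{1,1}$, then the map $d_2^{1,1}$, then the edge map $\check{H}_{\mr{k}?}^3(U,F)\to H^3$, yields the asserted five-term exact sequence, while the analysis of degrees $0,1$ gives the two isomorphisms. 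I do not expect any genuine obstacle: the argument is purely homological bookkeeping once the spectral sequence is in hand, and its entire nontrivial content is the vanishing of the whole left column, which is exactly what Proposition \ref{prop1.1} supplies.
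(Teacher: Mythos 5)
Your proposal is correct and is exactly the argument the paper intends: the paper's proof consists of the single remark that the corollary ``follows from Proposition \ref{prop1.1},'' leaving implicit precisely the standard low-degree bookkeeping you carry out, namely that the vanishing of the left column $E_2^{0,q}=\check{H}_{\mr{k}?}^0(U,\underline{H}_{\mr{k}?}^q(F))=0$ for $q>0$ forces $E_\infty^{n,0}=E_2^{n,0}$ in degrees $n\le 1$ and yields the spliced sequence $0\to E_2^{2,0}\to H^2\to \ker(d_2^{1,1})\to 0$ together with $E_\infty^{3,0}=E_2^{3,0}/\mathrm{im}(d_2^{1,1})$ in degrees $2$ and $3$. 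Your identifications of the edge maps and of the kernel of $\check{H}_{\mr{k}?}^3(U,F)\to H_{\mr{k}?}^3(U,F)$ are accurate, so there is nothing to add.
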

\begin{proof}
The results follow from Proposition \ref{prop1.1}.
\end{proof}

\begin{cor}\label{cor1.2}
Let $X=\Spec A$ be an fs log scheme with $A$ a noetherian strictly Henselian local ring, $x$ the closed point of $X$, $p$ the characteristic of the residue field of $A$, and $F$ a sheaf on $(\mr{fs}/X)_{\mr{k\acute{e}t}}$. Let $P\xrightarrow{\alpha} M_X$ be a chart of the log structure of $X$ with $P$ an fs monoid, such that the induced map $P\rightarrow M_{X,x}/\mc{O}_{X,x}^{\times}$ is an isomorphism. For any positive integer $m$, we define $X_m$ to be the fs log scheme $X\times_{\Spec\Z[P]}\Spec\Z[P^{1/m}]$ endowed with the canonical log structure coming from $P^{1/m}$, where $P^{1/m}$ is a monoid endowed with a homomorphism $P\rightarrow P^{1/m}$ which can be identified with $P\xrightarrow{m}P$. We get a Kummer flat cover $f_m:X_m\rightarrow X$, which is also a Kummer \'etale cover if $m$ is coprime to $p$.

Let $\gamma\in  H_{\mr{k\acute{e}t}}^i(X,F)$, then there exists a positive integer $n$ with $(n,p)=1$ such that $\gamma$ maps to zero in $H_{\mr{k\acute{e}t}}^i(X_n,F)$ along $f_n:X_n\rightarrow X$.
\end{cor}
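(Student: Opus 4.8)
The plan is to prove the (formally stronger and more symmetric) statement that
$$\varinjlim_n H^i_{\mr{k\acute{e}t}}(X_n,F)=0\quad\text{for every }i>0,$$
where the colimit runs over the $n$ with $(n,p)=1$ ordered by divisibility; the corollary follows at once, since $\gamma$ then maps to $0$ in this filtered colimit and hence to $0$ in some $H^i_{\mr{k\acute{e}t}}(X_n,F)$. Two structural inputs will be used repeatedly. First, because $A$ is strictly Henselian and $P\xrightarrow{\cong}M_{X,x}/\mc{O}_{X,x}^{\times}$, the covers $\{f_n\colon X_n\to X:(n,p)=1\}$ are cofinal among Kummer \'etale covers of $X$ (the standard structural fact for strictly Henselian log local schemes), so $\check H^{*}_{\mr{k\acute{e}t}}(X,-)=\varinjlim_n\check H^{*}(X_n/X,-)$; the same applies with $X$ replaced by any $X_n$, since $X_n$ is again a strictly Henselian local fs log scheme with chart $P^{1/n}$, and all these towers are cofinal in one another. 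Second, each $f_n$ is Galois with finite abelian group $\Gamma_n$, and $X_n^{\times_X(p+1)}\cong\coprod_{\Gamma_n^{p}}X_n$; hence the \v{C}ech complex of $f_n$ with coefficients in any presheaf computes the cohomology of the finite group $\Gamma_n$.

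The engine of the proof is that \v{C}ech classes always die on the cover. Pulling back $f_n$ along itself gives the cover $\mr{pr}_1\colon X_n\times_XX_n\to X_n$, which admits the diagonal as a section; its augmented \v{C}ech complex is therefore split, so $\check H^{r}(X_n\times_XX_n/X_n,F)=0$ for $r>0$. Consequently any class of $\check H^{r}(X_n/X,F)$ restricts to $0$ in $H^{r}_{\mr{k\acute{e}t}}(X_n,F)$ for every $r>0$. It thus suffices to show, by induction on $i$, that the edge map $\check H^{i}_{\mr{k\acute{e}t}}(X_n,F)\to H^{i}_{\mr{k\acute{e}t}}(X_n,F)$ is surjective for all $n$; the case $i=1$ is Corollary \ref{cor1.1}.

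For the inductive step I assume $\varinjlim_n H^{j}_{\mr{k\acute{e}t}}(X_n,F)=0$ for $1\le j<i$ and feed the \v{C}ech-to-derived-functor spectral sequence $\check H^{p}_{\mr{k\acute{e}t}}(X,\underline H^{q}_{\mr{k\acute{e}t}}(F))\Rightarrow H^{p+q}_{\mr{k\acute{e}t}}(X,F)$. By the Galois description, $E_2^{p,q}=\varinjlim_n H^{p}(\Gamma_n,H^{q}_{\mr{k\acute{e}t}}(X_n,F))$. For $1\le q\le i-1$ this vanishes: a cocycle on the finite group $\Gamma_n$ has finitely many values in $H^{q}_{\mr{k\acute{e}t}}(X_n,F)$, and by the inductive hypothesis each such value is killed on some $X_{nm}$, so the inflated cocycle dies in the colimit. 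For $q=i$ we have $E_2^{0,i}=\check H^{0}_{\mr{k\acute{e}t}}(X,\underline H^{i}_{\mr{k\acute{e}t}}(F))=0$ by Proposition \ref{prop1.1}. Hence, in total degree $i$, every off-bottom term vanishes; as every differential that could hit $E^{i,0}_{r}$ issues from one of these vanishing terms (they sit in total degree $i-1$ with $q\ge1$), one gets $E^{i,0}_{\infty}=E^{i,0}_{2}=\check H^{i}_{\mr{k\acute{e}t}}(X,F)$ and therefore $H^{i}_{\mr{k\acute{e}t}}(X,F)\cong\check H^{i}_{\mr{k\acute{e}t}}(X,F)$. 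Running the identical argument over each $X_n$ and combining with the section argument yields $\varinjlim_n H^{i}_{\mr{k\acute{e}t}}(X_n,F)=0$.

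I expect the main obstacle to be the identification and vanishing of the terms $\check H^{p}_{\mr{k\acute{e}t}}(X,\underline H^{q}_{\mr{k\acute{e}t}}(F))$ for $0<q<i$. This is exactly where the strictly Henselian hypothesis, the cofinality and Galois structure of the tower $(X_n)$, and the inductive hypothesis all intervene: one needs that the only covers that matter are the $X_n$, that they are Galois so that the presheaves $\underline H^{q}(F)$ are computed by finite-group cohomology, and that the coefficient groups $H^{q}_{\mr{k\acute{e}t}}(X_n,F)$ become uniformly trivial along the tower. Once the elementary colimit-of-cocycles vanishing is in hand the spectral sequence collapses onto its bottom row, and the self-dual section argument disposes of what remains.
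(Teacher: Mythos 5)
Your proof is correct, but it takes a genuinely different and much heavier route than the paper's, and its real engine is hidden inside your first ``structural input''. The paper's proof is three lines: by Proposition~\ref{prop1.1} there is a Kummer \'etale cover killing $\gamma$; by Nizio{\l}'s structure theorem \cite[Prop.~2.15]{niz1} a member covering the closed point is refined by a diagram with $h\colon Z\rightarrow X_n$ classically \'etale, $(n,p)=1$; and since the underlying scheme of $X_n$ is strictly Henselian local, $h$ admits a section $s$, whence $f_n^*\gamma=s^*h^*f_n^*\gamma=0$. Your asserted cofinality of the tower $\{X_n\}_{(n,p)=1}$ among Kummer \'etale covers of $X$ is \emph{exactly} this combination (Nizio{\l}'s theorem plus the Henselian section trick), so you have cited, rather than proved, the only nontrivial step; the fact is true and standard, but you should make that dependence explicit. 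Granting it, your spectral-sequence induction is also logically redundant for the statement at hand: cofinality together with Proposition~\ref{prop1.1} applied over each $X_n$ (which satisfies the same hypotheses with chart $P^{1/n}$) already yields both the corollary and your stronger claim $\varinjlim_n H^i_{\mr{k\acute{e}t}}(X_n,F)=0$ directly --- any class of $H^i_{\mr{k\acute{e}t}}(X_n,F)$ dies on some Kummer \'etale cover, which some $X_{nm}$ refines --- with no induction, no Galois cohomology, and no collapse argument. What your longer route buys is finer information: the identification $\check{H}^p_{\mr{k\acute{e}t}}(X_n/X,\underline{H}^q_{\mr{k\acute{e}t}}(F))\cong H^p(\Gamma_n,H^q_{\mr{k\acute{e}t}}(X_n,F))$, the degeneration of the \v{C}ech-to-derived spectral sequence onto its bottom row, and the isomorphism $\check{H}^i_{\mr{k\acute{e}t}}(X,F)\cong H^i_{\mr{k\acute{e}t}}(X,F)$ --- in effect you re-derive, ahead of time, the mechanism the paper develops separately in Proposition~\ref{prop1.2} and Corollary~\ref{cor1.4}. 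The individual steps are sound: the diagonal-section splitting of the pulled-back \v{C}ech complex, the finitely-many-values argument killing $\varinjlim_n H^p(\Gamma_n,-)$ under the inductive hypothesis, and, notably, your use of Proposition~\ref{prop1.1} plus cofinality to get $E_2^{0,i}=0$, which correctly avoids the circular appeal to Proposition~\ref{prop1.2}(2) (whose proof in the paper depends on this very corollary). One cosmetic point: you use $p$ both for the residue characteristic and as a spectral-sequence index; rename one of them.
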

\begin{proof}
By Proposition \ref{prop1.1}, there exists a Kummer \'etale cover $$\{Y_i\xrightarrow{g_i} X\}_{i\in I}$$
such that $g_i^*(\gamma)=0$ for each $i\in I$. Let $i_0$ be such that $g_{i_0}(Y_{i_0})$ contains the closed point of $X$. By \cite[Prop. 2.15]{niz1}, there exists a commutative diagram
$$\xymatrix{
Z\ar[r]^h\ar[d]_g & X_n\ar[d]^{f_n}  \\
Y_{i_0}\ar[r]^{g_{i_0}} &X
},$$
such that the image of $g_{i_0}\circ g$ contains the closed point of $X$, $g$ is Kummer \'etale, $n$ is a positive integer which is invertible on $X$, and $h$ is classically \'etale. Then we have $h^*f_n^*\gamma=g^*g_{i_0}^*\gamma=0$. Since $h$ is classically \'etale and the underlying scheme of $X_n$ is strictly Henselian local, $h$ has a section $s$. It follows that 
$$f_n^*\gamma=s^*h^*(f_n^*\gamma)=s^*0=0.$$
\end{proof}

\begin{cor}\label{cor1.3}
Let the notation and the assumptions be as in Corollary \ref{cor1.2}. Then we have 
$$H_{\mr{k\acute{e}t}}^i(X,F)\cong\varinjlim_n\mr{ker}(H_{\mr{k\acute{e}t}}^i(X,F)\xrightarrow{f_n^*}H_{\mr{k\acute{e}t}}^i(X_n,F)).$$
\end{cor}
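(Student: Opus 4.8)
The plan is to realize the right-hand side as an increasing union of subgroups of $H_{\mr{k\acute{e}t}}^i(X,F)$ that exhausts the whole group, so that the statement reduces immediately to Corollary \ref{cor1.2}.

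First I would make precise the directed system. Index it by the positive integers coprime to $p$, partially ordered by divisibility; this poset is filtered, and these are exactly the $n$ for which $f_n$ is a Kummer \'etale cover. For $n\mid n'$ with both coprime to $p$, the homomorphism $P^{1/n}\to P^{1/n'}$ (which may be identified with $P\xrightarrow{n'/n}P$) induces a morphism $X_{n'}\to X_n$ over $X$, through which $f_{n'}$ factors as $X_{n'}\to X_n\xrightarrow{f_n}X$. Applying $H_{\mr{k\acute{e}t}}^i(-,F)$, the pullback $f_{n'}^*$ factors through $f_n^*$, and therefore $\mr{ker}(f_n^*)\subseteq\mr{ker}(f_{n'}^*)$. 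Thus the groups $\mr{ker}\bigl(H_{\mr{k\acute{e}t}}^i(X,F)\xrightarrow{f_n^*}H_{\mr{k\acute{e}t}}^i(X_n,F)\bigr)$ form a directed system of subgroups of $H_{\mr{k\acute{e}t}}^i(X,F)$ whose transition maps are the natural inclusions.

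Since the colimit of a directed system of subobjects along inclusions is simply their union, I would identify $\varinjlim_n\mr{ker}(f_n^*)$ with $\bigcup_n\mr{ker}(f_n^*)\subseteq H_{\mr{k\acute{e}t}}^i(X,F)$; in particular the canonical map to $H_{\mr{k\acute{e}t}}^i(X,F)$ is injective. Surjectivity is precisely the content of Corollary \ref{cor1.2}: every class $\gamma\in H_{\mr{k\acute{e}t}}^i(X,F)$ is annihilated by $f_n^*$ for some $n$ with $(n,p)=1$, hence lies in $\mr{ker}(f_n^*)$. Therefore the union is all of $H_{\mr{k\acute{e}t}}^i(X,F)$, and the canonical map is an isomorphism.

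There is no deep obstacle here: the only point requiring care is the verification that the transition maps in the colimit are the inclusions, which rests on the factorization $f_{n'}=f_n\circ(X_{n'}\to X_n)$ and on restricting the index set to the $n$ coprime to $p$ (so that the system is genuinely cofinal among Kummer \'etale covers). Granting this bookkeeping, the statement is a formal consequence of Corollary \ref{cor1.2}.
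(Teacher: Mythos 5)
Your proposal is correct and matches the paper's argument: the paper's proof is simply ``This follows from Corollary \ref{cor1.2}'', and your write-up supplies exactly the intended bookkeeping (the kernels form an increasing union via the factorization $f_{n'}=f_n\circ(X_{n'}\to X_n)$, and Corollary \ref{cor1.2} shows this union exhausts $H_{\mr{k\acute{e}t}}^i(X,F)$). No gaps.
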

\begin{proof}
This follows from Corollary \ref{cor1.2}.
\end{proof}

\begin{prop}\label{prop1.2}
Let the notation and the assumptions be as in Corollary \ref{cor1.2}. We further let $\N':=\{n\in\N\mid (n,p)=1 \}$. 
\begin{enumerate}[(1)]
\item The family $\mathscr{X}_{\N}:=\{X_n\rightarrow X\}_{n\geq1}$ (resp. $\mathscr{X}_{\N'}:=\{X_n\rightarrow X\}_{n\in\N'}$) of Kummer flat covers (resp. Kummer \'etale covers) of $X$ satisfies the condition (L3) from \cite[\S 2]{art3}, whence a spectral sequence
\begin{equation}\label{eq1.1}
\check{H}_{\mr{kfl}}^i(\mathscr{X}_{\N},\underline{H}_{\mr{kfl}}^j(F))\Rightarrow H^{i+j}_{\mr{kfl}}(X,F)\quad (\text{resp. $\check{H}_{\mr{k\acute{e}t}}^i(\mathscr{X}_{\N'},\underline{H}_{\mr{k\acute{e}t}}^j(F))\Rightarrow H^{i+j}_{\mr{k\acute{e}t}}(X,F)$}),
\end{equation}
where 
$$\check{H}_{\mr{kfl}}^i(\mathscr{X}_{\N},F):=\varinjlim_{n\in\N}\check{H}_{\mr{kfl}}^i(X_n/X,F)$$
and
$$\check{H}_{\mr{k\acute{e}t}}^i(\mathscr{X}_{\N'},F):=\varinjlim_{n\in\N'}\check{H}_{\mr{k\acute{e}t}}^i(X_n/X,F).$$
\item We have $\check{H}_{\mr{k\acute{e}t}}^0(\mathscr{X}_{\N'},\underline{H}_{\mr{k\acute{e}t}}^j(F))=0$ for any $j>0$.
\end{enumerate}
\end{prop}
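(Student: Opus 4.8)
The plan is to treat the two assertions separately, reducing (1) to Artin's formal machinery and (2) to Corollary \ref{cor1.2}.

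For part (1), I would first record that $\mathscr{X}_{\N}=\{X_n\to X\}_{n\geq1}$ (resp. $\mathscr{X}_{\N'}=\{X_n\to X\}_{n\in\N'}$) is a directed system of Kummer flat (resp. Kummer \'etale) covers of $X$, the order being divisibility: given $m,n$, the monoid maps $P^{1/m}\to P^{1/\mr{lcm}(m,n)}$ and $P^{1/n}\to P^{1/\mr{lcm}(m,n)}$ produce maps $X_{\mr{lcm}(m,n)}\to X_m$ and $X_{\mr{lcm}(m,n)}\to X_n$ over $X$, so $X_{\mr{lcm}(m,n)}\to X$ is a common refinement, and the transition maps $X_{mn}\to X_n$ are again covers of the same type (Kummer flat always, and Kummer \'etale when $mn\in\N'$, since then $m$ is prime to $p$). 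This directedness, together with the stability of the family under these refinements, is what has to be checked for Artin's condition (L3) of \cite[\S 2]{art3}, and it yields the spectral sequence (\ref{eq1.1}). Concretely, (\ref{eq1.1}) is the filtered colimit over $n$ of the Cartan--Leray (\v{C}ech-to-derived-functor) spectral sequences $\check{H}_{\mr{k}?}^i(X_n/X,\underline{H}_{\mr{k}?}^j(F))\Rightarrow H^{i+j}_{\mr{k}?}(X,F)$ of the individual covers $X_n\to X$; since filtered colimits are exact and the refinement maps act as the identity on the abutment $H^{i+j}_{\mr{k}?}(X,F)$, the colimit is again a convergent first-quadrant spectral sequence with the stated $E_2$-term and abutment.

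For part (2), I must show $\varinjlim_{n\in\N'}\check{H}_{\mr{k\acute{e}t}}^0(X_n/X,\underline{H}_{\mr{k\acute{e}t}}^j(F))=0$ for $j>0$. A class in $\check{H}_{\mr{k\acute{e}t}}^0(X_n/X,\underline{H}_{\mr{k\acute{e}t}}^j(F))$ is represented by some $s\in H_{\mr{k\acute{e}t}}^j(X_n,F)$. For $n\in\N'$ the log scheme $X_n$ again satisfies all the hypotheses of Corollary \ref{cor1.2}: its underlying scheme is noetherian strictly Henselian local, it carries the chart $P^{1/n}\to M_{X_n}$ inducing an isomorphism $P^{1/n}\xrightarrow{\cong}M_{X_n,x_n}/\mc{O}_{X_n,x_n}^\times$, and one checks $(X_n)_{n'}=X_{nn'}$ for the associated root covers. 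Hence Corollary \ref{cor1.2} applied with base $X_n$ furnishes an integer $n'$ prime to $p$ with $s|_{X_{nn'}}=0$, so the given class dies under the transition map to $\check{H}_{\mr{k\acute{e}t}}^0(X_{nn'}/X,\underline{H}_{\mr{k\acute{e}t}}^j(F))$ and is therefore zero in the colimit. Equivalently, the same mechanism (via \cite[Prop. 2.15]{niz1} and the splitting of classical \'etale covers of the strictly Henselian local $X_n$) shows that $\mathscr{X}_{\N'}$ is cofinal among the Kummer \'etale covers of $X$, so the colimit coincides with $\check{H}_{\mr{k\acute{e}t}}^0(X,\underline{H}_{\mr{k\acute{e}t}}^j(F))$, which vanishes by Proposition \ref{prop1.1}.

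The formal part of (1) is routine once the directedness is matched to Artin's (L3); the real content lies in the inheritance of hypotheses used in (2). The point I expect to require care is the verification that $X_n$ (for $n\in\N'$) is still strictly Henselian local with the isomorphism-type chart $P^{1/n}$ and that $(X_n)_{n'}=X_{nn'}$, so that Corollary \ref{cor1.2} can be bootstrapped from base $X$ to base $X_n$. This is also where the restriction to the Kummer \'etale topology enters: the splitting of classical \'etale covers over a strictly Henselian local ring, which drives both Corollary \ref{cor1.2} and the cofinality above, has no faithfully flat counterpart, so part (2) does not extend to the Kummer flat case.
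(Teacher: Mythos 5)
Your proposal is correct and follows essentially the same route as the paper: part (1) is Artin's colimit \v{C}ech-to-derived-functor spectral sequence for the directed family $\{X_n\to X\}$ (the paper simply cites \cite[Chap.~II, Sec.~3, (3.3)]{art3}), and part (2) uses the injection $\check{H}_{\mr{k\acute{e}t}}^0(\mathscr{X}_{\N'},\underline{H}_{\mr{k\acute{e}t}}^j(F))\hookrightarrow\varinjlim_{n\in\N'}H_{\mr{k\acute{e}t}}^j(X_n,F)$ together with the vanishing of the colimit via Corollary \ref{cor1.2}. Your explicit verification that each $X_n$ ($n\in\N'$) again satisfies the hypotheses of Corollary \ref{cor1.2} (strictly Henselian local, chart $P^{1/n}$, and $(X_n)_{n'}=X_{nn'}$) fills in a bootstrapping step the paper's one-line proof leaves implicit, and is exactly the right point to flag.
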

\begin{proof}
Part (1) follows from \cite[Chap. II, Sec. 3, (3.3)]{art3}.

Part (2) follows from Corollary \ref{cor1.2}. Indeed we have
$$\check{H}_{\mr{k\acute{e}t}}^0(\mathscr{X}_{\N'},\underline{H}_{\mr{k\acute{e}t}}^j(F))\hookrightarrow\varinjlim_{n\in\N'}H_{\mr{k\acute{e}t}}^j(X_n,F),$$
and the latter vanishes by Corollary \ref{cor1.2}.
\end{proof}

\begin{cor}\label{cor1.4}
Let the notation and the assumptions be as in Corollary \ref{cor1.2}. Then the groups $H_{\mr{k\acute{e}t}}^i(X,F)$ are torsion and $p$-torsion-free for all $i>0$.
\end{cor}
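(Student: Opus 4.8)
The plan is to feed everything through the \v{C}ech-to-derived-functor spectral sequence of Proposition \ref{prop1.2} (1) and to show that every entry of its $E_2$-page, apart from the corner $(0,0)$, is a torsion abelian group with trivial $p$-torsion. Recall that an abelian group $M$ is torsion with $M[p]=0$ exactly when every element of $M$ has finite order prime to $p$; call this property $(\ast)$. The class of groups with property $(\ast)$ is stable under passage to subgroups, to quotients, to extensions, and to filtered colimits (for quotients: a lift of a $p$-torsion class has order prime to $p$ by hypothesis, hence the class itself is $0$), so it suffices to control the $E_2$-terms and then reassemble.

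The key structural input is that for $n\in\N'$ the Kummer \'etale cover $f_n\colon X_n\to X$ is Galois. Since $n$ is invertible on $X$ and $A$ is strictly Henselian, $X_n\to X$ is a torsor under the finite abelian group $G_n:=\mr{Hom}(P^{\mr{gp}},\mu_n(A))$; as $G_n$ is killed by $n$, every prime divisor of $|G_n|$ divides $n$, so $|G_n|$ is prime to $p$. Consequently the $r$-fold fibre product $X_n\times_X\cdots\times_X X_n$ in $\fsX$ is a disjoint union of copies of $X_n$ indexed by $G_n^{r-1}$. Since the presheaf $\underline{H}_{\mr{k\acute{e}t}}^j(F)$ sends disjoint unions to products, the \v{C}ech complex of $f_n$ with coefficients in $\underline{H}_{\mr{k\acute{e}t}}^j(F)$ is identified with the standard complex computing the group cohomology of the $G_n$-module $H_{\mr{k\acute{e}t}}^j(X_n,F)$, so that
\[
\check{H}_{\mr{k\acute{e}t}}^i(X_n/X,\underline{H}_{\mr{k\acute{e}t}}^j(F))\cong H^i\bigl(G_n,H_{\mr{k\acute{e}t}}^j(X_n,F)\bigr).
\]
For $i>0$ the right-hand side is annihilated by $|G_n|$, which is prime to $p$, hence has property $(\ast)$; passing to the colimit over $n\in\N'$ shows that the term $E_2^{i,j}=\check{H}_{\mr{k\acute{e}t}}^i(\mathscr{X}_{\N'},\underline{H}_{\mr{k\acute{e}t}}^j(F))$ has property $(\ast)$ for every $i>0$ and every $j\geq0$.

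It remains to assemble these facts. For $i=0$ and $j>0$ we have $E_2^{0,j}=0$ by Proposition \ref{prop1.2} (2). Fix $m>0$. Each graded piece of the spectral-sequence filtration on $H_{\mr{k\acute{e}t}}^m(X,F)$ is a term $E_\infty^{i,j}$ with $i+j=m$, a subquotient of $E_2^{i,j}$; the term with $i=0$ vanishes, and each term with $i>0$ has property $(\ast)$, hence so does the corresponding subquotient. As this filtration is finite and $(\ast)$ is stable under extensions, $H_{\mr{k\acute{e}t}}^m(X,F)$ has property $(\ast)$, i.e. it is torsion and $p$-torsion-free, for every $m>0$.

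The main obstacle is the first step, namely pinning down the Galois structure of the standard Kummer cover $f_n\colon X_n\to X$ and thereby reducing the \v{C}ech cohomology of $f_n$ to the group cohomology of $G_n$; once $|G_n|$ is seen to be prime to $p$, the annihilation of positive-degree group cohomology by the group order does the rest. Alternatively one can bypass the spectral sequence altogether: by Corollary \ref{cor1.2} any class in $H_{\mr{k\acute{e}t}}^i(X,F)$ with $i>0$ dies in $H_{\mr{k\acute{e}t}}^i(X_n,F)$ for a suitable $n\in\N'$, and the corestriction attached to the Galois cover $f_n$ shows that the class is killed by $|G_n|$, which is prime to $p$.
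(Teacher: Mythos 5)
Your proof is correct and follows essentially the same route as the paper: the spectral sequence of Proposition \ref{prop1.2} (1), the vanishing of the terms $E_2^{0,j}$ ($j>0$) from Proposition \ref{prop1.2} (2), the identification $\check{H}_{\mr{k\acute{e}t}}^i(X_n/X,\underline{H}_{\mr{k\acute{e}t}}^j(F))\cong H^i\bigl(G_n,H_{\mr{k\acute{e}t}}^j(X_n,F)\bigr)$ coming from the Galois structure of $X_n/X$ (your $G_n=\mr{Hom}(P^{\mr{gp}},\mu_n(A))$ is precisely the group of points of the paper's constant Galois group $H_n$), and the annihilation of positive-degree group cohomology by the group order, which is prime to $p$. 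Your closing alternative --- combining Corollary \ref{cor1.2} with the corestriction for the Galois cover $f_n$, so that any class trivialized on $X_n$ is killed by $|G_n|$ --- is also sound and gives a slightly shorter path than the spectral-sequence assembly used in the paper.
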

\begin{proof}
By the Kummer \'etale spectral sequence from (\ref{eq1.1}) and Proposition \ref{prop1.2} (2), it suffices to show that the groups $\varinjlim_{n\in\N'}\check{H}_{\mr{k\acute{e}t}}^i(X_n/X,\underline{H}_{\mr{k\acute{e}t}}^j(F))$ are torsion and $p$-torsion-free for all $i>0$ and $j\geq0$.  Let $H_n:=\Spec\Z[(P^{1/n})^{\mr{gp}}/P^{\mr{gp}}]$ which is a group scheme over $\Spec\Z$ such that 
$$X_n\times_XX_n=X_n\times_{\Spec\Z}H_n,$$
see \cite[the second paragraph on p522]{niz1} for more detailed descriptions of $H_n$. Regarded as a group scheme over $X$, $H_n$ is constant. And $X_n$ is a Galois cover of $X$ with Galois group $H_n$. By \cite[Example 2.6]{mil1}, we have 
$$\check{H}_{\mr{k\acute{e}t}}^i(X_n/X,\underline{H}_{\mr{k\acute{e}t}}^j(F))=H^i(H_n,H_{\mr{k\acute{e}t}}^j(X_n,F))$$
which is torsion and $p$-torsion-free for $i>0$. This finishes the proof.
\end{proof}

\begin{cor}\label{cor1.5}
Let $X$ be a locally  noetherian fs log scheme, and let $F$ be a sheaf on $(\mr{fs}/X)_{\mr{k\acute{e}t}}$. Then the sheaves $R^i\varepsilon_{\mr{\acute{e}t}*}F$ are torsion for $i>0$.
\end{cor}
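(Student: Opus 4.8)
The plan is to check the assertion on stalks and reduce to the strictly Henselian local computation already carried out in Corollary \ref{cor1.4}.

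By the definition of the higher direct image along the morphism of sites $\varepsilon_{\mr{\acute{e}t}}$, the sheaf $R^i\varepsilon_{\mr{\acute{e}t}*}F$ is the sheaf on $\Xet$ associated to the presheaf $U\mapsto H^i_{\mr{k\acute{e}t}}(U,F)$, $U\in(\mr{fs}/X)$. Since a sheaf of abelian groups is torsion precisely when all of its stalks are torsion, and since (by the reduction to the strict site recalled in the Notation section) the cohomology is computed on $(\mr{st}/X)_{\mr{\acute{e}t}}$, it suffices to show that the stalk $(R^i\varepsilon_{\mr{\acute{e}t}*}F)_{\bar x}$ is a torsion group for every geometric point $\bar x$ of the underlying scheme of $X$. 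As stalks commute with sheafification, this stalk is the filtered colimit $\varinjlim_U H^i_{\mr{k\acute{e}t}}(U,F)$ over the classical \'etale neighborhoods $U$ of $\bar x$.

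First I would identify this colimit with cohomology over the strict localization. Let $X_{(\bar x)}$ be the fs log scheme with underlying scheme $\Spec\mc{O}_{X,\bar x}^{\mr{sh}}$ and log structure pulled back from $X$; since $X$ is locally noetherian, $\mc{O}_{X,\bar x}^{\mr{sh}}$ is a noetherian strictly Henselian local ring, and $X_{(\bar x)}$ is the limit of the cofiltered system of those \'etale neighborhoods $U$, with affine strict transition maps. By the continuity of Kummer \'etale cohomology along such limits of fs log schemes one obtains $\varinjlim_U H^i_{\mr{k\acute{e}t}}(U,F)\cong H^i_{\mr{k\acute{e}t}}(X_{(\bar x)},F)$. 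Next I would check that $X_{(\bar x)}$ meets the hypotheses of Corollary \ref{cor1.2}: as $X$ is fs, \'etale locally around $\bar x$ there is a chart $P\to M_X$ with $P$ an fs monoid inducing an isomorphism $P\xrightarrow{\cong}M_{X,\bar x}/\mc{O}_{X,\bar x}^{\times}$, and because every classical \'etale cover of a strictly Henselian local scheme splits while the characteristic monoid $M/\mc{O}^{\times}$ is unchanged by strict localization, this chart is defined on $X_{(\bar x)}$ and still induces an isomorphism at the closed point. Corollary \ref{cor1.4} then shows $H^i_{\mr{k\acute{e}t}}(X_{(\bar x)},F)$ is torsion for $i>0$, so every stalk of $R^i\varepsilon_{\mr{\acute{e}t}*}F$ is torsion and the sheaf itself is torsion.

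The one genuinely nonformal step is the identification $\varinjlim_U H^i_{\mr{k\acute{e}t}}(U,F)\cong H^i_{\mr{k\acute{e}t}}(X_{(\bar x)},F)$: it requires a passage-to-the-limit theorem for Kummer \'etale cohomology of fs log schemes along affine strict transition maps (the logarithmic analogue of the classical continuity of \'etale cohomology), together with the verification that the log structures behave correctly under strict localization. Everything else is either formal---the sheafification description of $R^i\varepsilon_{\mr{\acute{e}t}*}$ and the stalkwise criterion for being torsion---or is exactly the content of the already-established Corollary \ref{cor1.4}.
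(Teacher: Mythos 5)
Your overall route --- stalks, strict localization, then Corollary \ref{cor1.4} --- is exactly the deduction the paper intends (its entire proof of this corollary is ``This follows from Corollary \ref{cor1.4}''). But the step you yourself flag as the nonformal one is, as you literally state it, false for an arbitrary sheaf $F$ on the big site: the identification $\varinjlim_U H^i_{\mr{k\acute{e}t}}(U,F)\cong H^i_{\mr{k\acute{e}t}}(X_{(\bar x)},F)$, where on the right $F$ means the restriction of the given big-site sheaf to the object $X_{(\bar x)}\in(\mr{fs}/X)$. A sheaf on $(\mr{fs}/X)_{\mr{k\acute{e}t}}$ is merely a sheaf-functor on all fs log schemes over $X$ and need not carry the cofiltered limit $X_{(\bar x)}=\varprojlim_U U$ to a colimit of sections; this already fails at $i=0$, where in general $\varinjlim_U F(U)\neq F(X_{(\bar x)})$ (for instance $F=\prod_{p}\mc{O}/p$, an infinite product of sheaves, in the classical big \'etale site: a filtered colimit of products is not the product of the colimits). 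What the continuity theorem --- classical or its Kummer \'etale analogue --- actually gives is $\varinjlim_U H^i_{\mr{k\acute{e}t}}(U,F)\cong H^i_{\mr{k\acute{e}t}}(X_{(\bar x)},f^{-1}F)$, where $f^{-1}F$ is the \emph{inverse image} sheaf on the small Kummer \'etale site of $X_{(\bar x)}$, which for a general $F$ differs from the restriction of $F$.

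Fortunately the slip is harmless for the purpose at hand: Corollary \ref{cor1.4} holds for an \emph{arbitrary} sheaf on a noetherian strictly Henselian local fs log scheme satisfying the chart condition (its proof, via Corollary \ref{cor1.2}, Proposition \ref{prop1.2} and the \v{C}ech/Galois-cohomology computation, runs verbatim on the small Kummer \'etale site), so you may simply apply it to $f^{-1}F$ rather than to the restriction of $F$, and conclude that every stalk of $R^i\varepsilon_{\mr{\acute{e}t}*}F$ is torsion. With that substitution your remaining points --- existence of a neat chart \'etale-locally at $\bar x$ and its persistence under strict localization, noetherianness of $\mc{O}^{\mr{sh}}_{X,\bar x}$, and the stalkwise criterion for torsion --- are fine. (One further gloss, which the paper makes equally tacitly: torsionness of a sheaf on the big site should in principle be checked at geometric points of every $U\in(\mr{fs}/X)$, not only of $X$, and objects of $(\mr{fs}/X)$ carry no noetherian hypothesis; like the paper, you implicitly restrict to the locally noetherian situation where Corollary \ref{cor1.4} applies.)
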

\begin{proof}
This follows from Corollary \ref{cor1.4}.
\end{proof}

\begin{thm}\label{thm1.1}
Let $X$ be a locally  noetherian fs log scheme, let $G$ be a smooth commutative group scheme with connected fibers over the underlying scheme of $X$, and let $i$ be a positive integer. 
\begin{enumerate}[(1)]
\item We have $$R^i\varepsilon_{\mr{\acute{e}t}*}G=\varinjlim_n (R^i\varepsilon_{\mr{\acute{e}t}*}G)[n]=\bigoplus_{\text{$l$ prime}}(R^i\varepsilon_{\mr{\acute{e}t}*}G)[l^\infty],$$
where $(R^i\varepsilon_{\mr{\acute{e}t}*}G)[n]$ denotes the $n$-torsion subsheaf of $R^i\varepsilon_{\mr{\acute{e}t}*}G$ and $(R^i\varepsilon_{\mr{\acute{e}t}*}G)[l^\infty]$ denotes the $l$-primary part of $R^i\varepsilon_{\mr{\acute{e}t}*}G$ for a prime number $l$.
\item The $l$-primary part $(R^i\varepsilon_{\mr{\acute{e}t}*}G)[l^\infty]$ is supported on the locus where $l$ is invertible.
\item If $n$ is invertible on $X$, then 
$$(R^i\varepsilon_{\mr{\acute{e}t}*}G)[n]=R^i\varepsilon_{\mr{\acute{e}t}*}G[n]=G[n](-i)\otimes_{\Z}\bigwedge^i(\Gml/\Gm)_{X_{\mr{\acute{e}t}}}.$$
\end{enumerate}
\end{thm}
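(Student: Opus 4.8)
The plan is to deduce all three parts from the torsion and divisibility properties coming from the local \v{C}ech computations above, combined with the Kato--Nakayama comparison (Theorem \ref{thm0.6}).

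I would first dispose of parts (1) and (2). Part (1) is formal: by Corollary \ref{cor1.5} the sheaf $R^i\varepsilon_{\mr{\acute{e}t}*}G$ is torsion for $i>0$, and every torsion sheaf of abelian groups $A$ satisfies $A=\varinjlim_n A[n]=\bigoplus_{l}A[l^\infty]$, the primary decomposition being checked on stalks. For part (2) I would pass to stalks: for a geometric point $\bar x$ of $X$ with residue characteristic $p$, taking stalks commutes with $R^i\varepsilon_{\mr{\acute{e}t}*}$ in the locally noetherian setting, so the stalk of $R^i\varepsilon_{\mr{\acute{e}t}*}G$ at $\bar x$ is $H^i_{\mr{k\acute{e}t}}(\Spec\mc{O}_{X,\bar x}^{\mr{sh}},G)$ for the induced log structure. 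Since $\mc{O}_{X,\bar x}^{\mr{sh}}$ is noetherian strictly Henselian local and admits a chart $P\to M$ with $P\cong M_{X,\bar x}/\mc{O}_{X,\bar x}^{\times}$, Corollary \ref{cor1.4} applies and shows this group is $p$-torsion-free. Hence its $p$-primary part vanishes, i.e. $(R^i\varepsilon_{\mr{\acute{e}t}*}G)[p^\infty]_{\bar x}=0$, which is exactly the assertion that $(R^i\varepsilon_{\mr{\acute{e}t}*}G)[l^\infty]$ is supported where $l$ is invertible.

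For part (3), fix $n$ invertible on $X$. As $G$ is smooth with connected fibres and $n$ is invertible, $[n]$ is \'etale with open (hence full) image on each fibre, so $0\to G[n]\to G\xrightarrow{n}G\to 0$ is exact on $(\mr{fs}/X)_{\mr{k\acute{e}t}}$; applying $R\varepsilon_{\mr{\acute{e}t}*}$ yields the short exact sequence
\begin{equation*}
0\to R^{i-1}\varepsilon_{\mr{\acute{e}t}*}G\otimes_{\Z}\Z/n\Z\to R^i\varepsilon_{\mr{\acute{e}t}*}G[n]\to (R^i\varepsilon_{\mr{\acute{e}t}*}G)[n]\to 0.
\end{equation*}
The middle term I would identify via Theorem \ref{thm0.6}: since $G[n]$ is representable it is already a sheaf on the Kummer \'etale site, so $\varepsilon_{\mr{\acute{e}t}}^*G[n]=G[n]$, and as $G[n]$ is killed by $n$ with $n$ invertible, Theorem \ref{thm0.6} gives $R^i\varepsilon_{\mr{\acute{e}t}*}G[n]=G[n](-i)\otimes_{\Z}\bigwedge^i(\Gml/\Gm)_{X_{\mr{\acute{e}t}}}$. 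Thus part (3) reduces to showing that the left-hand term vanishes, i.e. that $R^{i-1}\varepsilon_{\mr{\acute{e}t}*}G$ is $n$-divisible. I would prove more generally, for all $j\geq 0$ and all $n$ invertible on $X$, that $R^j\varepsilon_{\mr{\acute{e}t}*}G$ is $n$-divisible, by induction on $j$ run simultaneously with the formula $(R^j\varepsilon_{\mr{\acute{e}t}*}G)[n]=G[n](-j)\otimes_{\Z}\bigwedge^j(\Gml/\Gm)_{X_{\mr{\acute{e}t}}}$. The base case $j=0$ holds because $R^0\varepsilon_{\mr{\acute{e}t}*}G=G$ is $n$-divisible and $\bigwedge^0=\Z$. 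In the inductive step, $n$-divisibility of $R^{j-1}\varepsilon_{\mr{\acute{e}t}*}G$ kills the left term of the displayed sequence with $i=j$, yielding the torsion formula at level $j$; then, using the primary decomposition of part (1), for $l\nmid n$ multiplication by $n$ is already invertible on the $l$-primary part, while for $l\mid n$ the prime $l$ is invertible on $X$, so passing to the colimit over $l^k$ in the just-proved formula gives $(R^j\varepsilon_{\mr{\acute{e}t}*}G)[l^\infty]=G[l^\infty](-j)\otimes_{\Z}\bigwedge^j(\Gml/\Gm)_{X_{\mr{\acute{e}t}}}$, which is $l$-divisible because $G[l^\infty]$ is. Summing over $l$ shows $R^j\varepsilon_{\mr{\acute{e}t}*}G$ is $n$-divisible, closing the induction.

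The step I expect to be the main obstacle is precisely this divisibility, together with the care needed to see that the torsion formula and the divisibility reinforce one another without circularity: at level $j$ one must establish the $n$-torsion formula (using divisibility at level $j-1$) \emph{before} deducing divisibility at level $j$ from it. The remaining technical points --- exactness of the Kummer sequence on the Kummer \'etale site, the identity $\varepsilon_{\mr{\acute{e}t}}^*G[n]=G[n]$, the commutation of $\otimes_{\Z}\bigwedge^j(\Gml/\Gm)_{X_{\mr{\acute{e}t}}}$ with the filtered colimit $\varinjlim_k G[l^k]=G[l^\infty]$, and the applicability of Corollary \ref{cor1.4} to the strict Henselization --- are routine but should be checked explicitly.
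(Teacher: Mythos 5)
Your proposal is correct and follows the paper's proof essentially step for step: parts (1) and (2) come from Corollaries \ref{cor1.5} and \ref{cor1.4} (the paper leaves the stalk argument you spell out implicit), and part (3) uses the same short exact sequence from the Kummer sequence, the identification of the middle term $R^i\varepsilon_{\mr{\acute{e}t}*}G[n]$ via Theorem \ref{thm0.6}, and the same induction on the degree in which $n$-divisibility of $R^{j-1}\varepsilon_{\mr{\acute{e}t}*}G$ and the torsion formula at level $j$ reinforce each other through the primary decomposition of part (1). The only (harmless) deviation is that you justify surjectivity of $[n]$ on fibres by the \'etaleness/open-image argument, where the paper instead reduces to tori, unipotent groups and abelian varieties via the structure theorem for connected algebraic groups.
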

\begin{proof}
Part (1) follows from Corollary \ref{cor1.5}.

Part (2) follows from Corollary \ref{cor1.4}.

We are left with part (3). Since $n$ is invertible on $X$, the sequence 
\begin{equation}\label{eq1.2}
0\rightarrow G[n]\rightarrow G\xrightarrow{n}G\rightarrow0
\end{equation}
is a short exact sequence of sheaves of abelian groups for the classical flat topology and $G[n]$ is quasi-finite and \'etale. Indeed, if $X$ is a point, then by the structure theorem of connected algebraic groups, see \cite[Thm. 2.3, Thm. 2.4]{bri1}, we are reduced to check the cases that $G$ is a torus, or connected unipotent group, or abelian variety, which are clearly true. In general, it suffices to show that $G\xrightarrow{n}G$ is an epimorphism for the classical flat topology. This is clear, since it is set-theoretically surjective and flat by the fiberwise criterion of flatness, therefore it is faithfully flat. Moreover, $G[n]$ being \'etale over $X$ implies that $G\xrightarrow{n}G$ is even an epimorphism for the classical \'etale topology, and thus the sequence (\ref{eq1.2}) is also exact for the classical \'etale topology. Since the pullback functor $\varepsilon_{\mr{\acute{e}t}}^*$ is exact, the sequence (\ref{eq1.2}) remains exact on $(\mr{fs}/X)_{\mr{k\acute{e}t}}$ and induces a long exact sequence 
$$\rightarrow R^{i-1}\varepsilon_{\mr{\acute{e}t}*}G\xrightarrow{n} R^{i-1}\varepsilon_{\mr{\acute{e}t}*}G\rightarrow R^i\varepsilon_{\mr{\acute{e}t}*}G[n]\rightarrow R^i\varepsilon_{\mr{\acute{e}t}*}G\xrightarrow{n} R^i\varepsilon_{\mr{\acute{e}t}*}G.$$
This further induces a short exact sequence
$$0\rightarrow R^{i-1}\varepsilon_{\mr{\acute{e}t}*}G\otimes_{\Z}\Z/n\Z\rightarrow R^{i}\varepsilon_{\mr{\acute{e}t}*}G[n]\rightarrow (R^{i}\varepsilon_{\mr{\acute{e}t}*}G)[n]\rightarrow 0$$
for each $i>0$. We have $R^i\varepsilon_{\mr{\acute{e}t}*}G[n]=G[n](-i)\otimes_{\Z}\bigwedge^i(\Gml/\Gm)_{X_{\mr{\acute{e}t}}}$ by Theorem \ref{thm0.6}. To finish the proof, it suffices to prove that the sheaf $R^{i-1}\varepsilon_{\mr{\acute{e}t}*}G$ is $n$-divisible. We proceed by induction. For $i=1$, this is clear, since $G\xrightarrow{n}G$ is an epimorphism of sheaves of abelian groups for the classical \'etale topology. Assume $R^{j}\varepsilon_{\mr{\acute{e}t}*}G$ is $n$-divisible, then we have $R^{j+1}\varepsilon_{\mr{\acute{e}t}*}G[n]\xrightarrow{\cong} (R^{j+1}\varepsilon_{\mr{\acute{e}t}*}G)[n]$. Therefore
\begin{align*}
\varinjlim_r(R^{j+1}\varepsilon_{\mr{\acute{e}t}*}G)[n^r]=&\varinjlim_rR^{j+1}\varepsilon_{\mr{\acute{e}t}*}G[n^r] \\
=&\varinjlim_rG[n^r](-j-1)\otimes_{\Z}\bigwedge^{j+1}(\Gml/\Gm)_{X_{\mr{\acute{e}t}}},
\end{align*}
where the second equality follows from Theorem \ref{thm0.6}. Hence $\varinjlim_r(R^{j+1}\varepsilon_{\mr{\acute{e}t}*}G)[n^r]$ is $n$-divisible. It follows by part (1) that $R^{j+1}\varepsilon_{\mr{\acute{e}t}*}G$ is $n$-divisible. This finishes the induction.
\end{proof}

\begin{cor}\label{cor1.6}
Let $X$ be a locally noetherian fs log scheme such that the underlying scheme of $X$ is a $\Q$-scheme, and $G$ a smooth commutative group scheme with connected fibres over the underlying scheme of $X$. Then we have 
$$R^i\varepsilon_{\mr{\acute{e}t},*}G=\varinjlim_{n}G[n](-i)\otimes_{\Z}\bigwedge^i(\Gml/\Gm)_{X_{\acute{e}t}}.$$
\end{cor}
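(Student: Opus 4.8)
The plan is to read the corollary off directly from Theorem \ref{thm1.1}. First I would observe that since the underlying scheme of $X$ is a $\Q$-scheme, every positive integer $n$ is invertible on $X$. Consequently part (3) of Theorem \ref{thm1.1} is available for all $n\geq 1$, with no restriction coming from the characteristic, and it yields
$$(R^i\varepsilon_{\mr{\acute{e}t}*}G)[n]=G[n](-i)\otimes_{\Z}\bigwedge^i(\Gml/\Gm)_{X_{\mr{\acute{e}t}}}$$
for every $n$.

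Next I would feed this into part (1) of Theorem \ref{thm1.1}, which identifies $R^i\varepsilon_{\mr{\acute{e}t}*}G$ with the direct limit $\varinjlim_n(R^i\varepsilon_{\mr{\acute{e}t}*}G)[n]$ of its $n$-torsion subsheaves, taken over the positive integers ordered by divisibility. Substituting the displayed identification into this limit gives the asserted formula
$$R^i\varepsilon_{\mr{\acute{e}t}*}G=\varinjlim_n G[n](-i)\otimes_{\Z}\bigwedge^i(\Gml/\Gm)_{X_{\mr{\acute{e}t}}}.$$

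The single point that requires care — and which I expect to be the main, albeit mild, obstacle — is the compatibility of the isomorphisms of part (3) with the transition maps on the two sides: on the left these are the inclusions $(R^i\varepsilon_{\mr{\acute{e}t}*}G)[n]\hookrightarrow(R^i\varepsilon_{\mr{\acute{e}t}*}G)[m]$ for $n\mid m$, while on the right they are induced by the inclusions $G[n]\hookrightarrow G[m]$. I would settle this by invoking the naturality of the cup-product isomorphism of Theorem \ref{thm0.6}: the identification $(R^i\varepsilon_{\mr{\acute{e}t}*}G)[n]\cong R^i\varepsilon_{\mr{\acute{e}t}*}G[n]$ comes from the $n$-divisibility of $R^{i-1}\varepsilon_{\mr{\acute{e}t}*}G$ established in the proof of Theorem \ref{thm1.1}, and the isomorphism $R^i\varepsilon_{\mr{\acute{e}t}*}G[n]\cong G[n](-i)\otimes_{\Z}\bigwedge^i(\Gml/\Gm)_{X_{\mr{\acute{e}t}}}$ of Theorem \ref{thm0.6} is functorial in the coefficient sheaf $G[n]$, hence commutes with the maps induced by $G[n]\hookrightarrow G[m]$. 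Passing to the colimit then completes the proof.
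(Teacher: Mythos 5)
Your proposal is correct and matches the paper's intended argument: the paper's proof of Corollary \ref{cor1.6} is simply ``This follows from Theorem \ref{thm1.1}'', and the route you spell out --- every $n$ is invertible on a $\Q$-scheme, so part (3) identifies $(R^i\varepsilon_{\mr{\acute{e}t}*}G)[n]$ for all $n$, and part (1) assembles these into the colimit --- is exactly how that deduction goes. Your extra remark on the compatibility of the transition maps, via the functoriality of the Kato--Nakayama cup-product isomorphism in the coefficient sheaf, is a legitimate point of care that the paper leaves implicit.
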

\begin{proof}
This follows from Theorem \ref{thm1.1}.
\end{proof}

\begin{cor}\label{cor1.7}
Let $p$ be a prime number. Let $X$ be a locally noetherian fs log scheme such that the underlying scheme of $X$ is an $\F_p$-scheme, and $G$ a smooth commutative group scheme with connected fibres over the underlying scheme of $X$. Then we have 
$$R^i\varepsilon_{\mr{\acute{e}t}*}G=\varinjlim_{(n,p)=1}G[n](-i)\otimes_{\Z}\bigwedge^i(\Gml/\Gm)_{X_{\acute{e}t}}.$$
\end{cor}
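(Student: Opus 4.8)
The plan is to deduce the statement directly from Theorem \ref{thm1.1}, the only extra input being that the underlying scheme of $X$ is an $\F_p$-scheme, so that $p$ is nowhere invertible on $X$ while every integer prime to $p$ is invertible on $X$.

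First I would kill the $p$-primary part. By Theorem \ref{thm1.1} (2) the sheaf $(R^i\varepsilon_{\mr{\acute{e}t}*}G)[p^\infty]$ is supported on the locus where $p$ is invertible; but the underlying scheme of $X$ is an $\F_p$-scheme, so $p=0$ on $X$ and this locus is empty. Hence $(R^i\varepsilon_{\mr{\acute{e}t}*}G)[p^\infty]=0$, and the direct-sum decomposition of Theorem \ref{thm1.1} (1) collapses to
$$R^i\varepsilon_{\mr{\acute{e}t}*}G=\bigoplus_{l\neq p}(R^i\varepsilon_{\mr{\acute{e}t}*}G)[l^\infty].$$

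Next I would rewrite the colimit $\varinjlim_n(R^i\varepsilon_{\mr{\acute{e}t}*}G)[n]$ from Theorem \ref{thm1.1} (1) as a colimit over $n$ prime to $p$. Writing any $n$ as $p^a m$ with $(m,p)=1$ and using that multiplication by $p$ is an isomorphism on each $l$-primary part with $l\neq p$, one sees that $(R^i\varepsilon_{\mr{\acute{e}t}*}G)[n]=(R^i\varepsilon_{\mr{\acute{e}t}*}G)[m]$. Since the transition maps in the colimit are the inclusions of these $n$-torsion subsheaves, the colimit is simply their increasing union, and this union is unchanged when the index is restricted to integers prime to $p$; thus
$$R^i\varepsilon_{\mr{\acute{e}t}*}G=\varinjlim_{(n,p)=1}(R^i\varepsilon_{\mr{\acute{e}t}*}G)[n].$$

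Finally, for each $n$ with $(n,p)=1$ the integer $n$ is invertible on the $\F_p$-scheme $X$, so Theorem \ref{thm1.1} (3) applies and gives $(R^i\varepsilon_{\mr{\acute{e}t}*}G)[n]=G[n](-i)\otimes_{\Z}\bigwedge^i(\Gml/\Gm)_{X_{\mr{\acute{e}t}}}$. Substituting this into the previous display yields the claimed formula. I expect no genuine obstacle here, as the entire content has been front-loaded into Theorem \ref{thm1.1}; the only point requiring a moment of care is the harmless bookkeeping in the previous paragraph, namely that the vanishing of the $p$-primary part lets one pass from the colimit over all $n$ to the colimit over $n$ prime to $p$ without altering the union.
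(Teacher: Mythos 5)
Your proposal is correct and follows exactly the paper's route: the paper's own proof of Corollary \ref{cor1.7} is simply ``This follows Theorem \ref{thm1.1}'', and your argument spells out precisely the intended deduction (the $p$-primary part vanishes by part (2) since $p=0$ on an $\F_p$-scheme, the colimit in part (1) therefore restricts to $n$ prime to $p$, and part (3) applies to each such $n$). Your bookkeeping step identifying $(R^i\varepsilon_{\mr{\acute{e}t}*}G)[n]$ with $(R^i\varepsilon_{\mr{\acute{e}t}*}G)[m]$ for $n=p^am$, $(m,p)=1$, is sound and is the only detail the paper leaves implicit.
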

\begin{proof}
This follows Theorem \ref{thm1.1}.
\end{proof}

Now we are going to generalize Kato's description of $R^1\varepsilon_{\mr{fl}*}G$, see \cite[Thm. 4.1]{kat2}, from smooth affine group schemes to smooth group schemes. The following lemma is analogous to \cite[Proof of Thm. III.3.9, Step 2 plus Rmk. 3.11 (b)]{mil1}.

\begin{lem}\label{lem1.2}
Let $X$ be a locally noetherian fs log scheme endowed with a chart $P\rightarrow M_X$ with $P$ an fs monoid satisfying $P^{\times}=1$, and $G$ a smooth commutative group scheme over $X$ endowed with the induced log structure from $X$. For a positive integer $m$, we define $P^{1/m}$ and $f_m:X_m\rightarrow X$ as in Corollary \ref{cor1.2}. Let $H_m$ be the group scheme $\Spec\Z[(P^{1/m})^{\mr{gp}}/P^{\mr{gp}}]$ over $\Spec\Z$, then we have that the $(r+1)$-fold product $X_m\times_X\cdots\times_XX_m$ is isomorphic to $X_m\times_{\Spec\Z} H_m^r$, where $H_m^r$ denotes the $r$-fold product of $H_m$ over $\Spec\Z$. 

We define $\underline{C}^{\cdot}(G)$ to be the complex of functors $\underline{C}^i(G):(\mr{st}/X)\rightarrow\mr{Ab}$ such that, for any $Y\in (\mr{st}/X)$, $\underline{C}^{\cdot}(G)(Y)$ is the \v{C}ech complex $C^{\cdot}(Y_m/Y,G)$ for the Kummer flat cover $Y_m:=Y\times_XX_m\rightarrow Y$. Write $\underline{Z}^i(G)$ for the functor 
$$(\mr{st}/X)\rightarrow\mr{Ab},Y\mapsto\mr{ker}(d^i:C^i(Y_m/Y,G)\rightarrow C^{i+1}(Y_m/Y,G)),$$
then $d^{i-1}:\underline{C}^{i-1}(G)\rightarrow\underline{Z}^i(G)$ is representable by a smooth morphism of algebraic spaces over $X$ for $i\geq1$.
\end{lem}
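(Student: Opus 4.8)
The plan is to first identify the terms of the \v{C}ech complex with Weil restrictions, so that representability and smoothness become instances of standard facts about restriction of scalars. Using the identification $X_m\times_X\cdots\times_X X_m\cong X_m\times_{\Spec\Z}H_m^r$ from the first half of the lemma, for $Y\in(\mr{st}/X)$ one has $\underline{C}^r(G)(Y)=G(Y_m\times_{\Spec\Z}H_m^r)=G(Y\times_X W_r)$, where $W_r:=X_m\times_{\Spec\Z}H_m^r$ and $W_r\to X$ is finite locally free, being the composite of the finite locally free map $X_m\to X$ with the base change of the finite locally free $H_m^r\to\Spec\Z$. Hence $\underline{C}^r(G)$ is the Weil restriction $\mathrm{Res}_{W_r/X}(G\times_X W_r)$ of the smooth $W_r$-group scheme $G\times_X W_r$ along a finite locally free morphism. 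I would invoke the standard representability of such restrictions of scalars by algebraic spaces, together with the fact that restriction of scalars along a finite locally free morphism preserves smoothness, to conclude that each $\underline{C}^r(G)$ is representable by a smooth commutative group algebraic space over $X$. The differentials $d^r$ are alternating sums of pullbacks along face maps, hence homomorphisms of group algebraic spaces, so each $\underline{Z}^i(G)=\ker(d^i)$ is a closed subgroup algebraic space, and in particular representable; moreover $\underline{Z}^0(G)\cong G$ by the sheaf property of $G$ for the Kummer flat cover.

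Next, since smoothness of a morphism of algebraic spaces is fppf-local on the base and $f_m\colon X_m\to X$ is finite, flat and surjective, it suffices to show that the base change $d^{i-1}_{X_m}\colon\underline{C}^{i-1}(G)_{X_m}\to\underline{Z}^i(G)_{X_m}$ is smooth. The point of passing to $X_m$ is that over $(\mr{st}/X_m)$ the tautological cover $Y'_m=Y'\times_X X_m\to Y'$ acquires a canonical section $(\mathrm{id},\mathrm{str})$ for every $Y'$, induced by the structure map $Y'\to X_m$, which is the diagonal of $X_m\times_X X_m\to X_m$ base-changed to $Y'$. The associated extra degeneracy yields, by the usual natural simplicial formulas, a contracting homotopy $k^{\bullet}$ of the augmented \v{C}ech complex, realized by morphisms of the representing group algebraic spaces. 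Thus $0\to G_{X_m}\to\underline{C}^0(G)_{X_m}\xrightarrow{d^0}\underline{C}^1(G)_{X_m}\to\cdots$ is split exact in the additive category of commutative group algebraic spaces over $X_m$, in which idempotents split (the image of an idempotent endomorphism is the kernel of its complement, hence a closed subgroup algebraic space).

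From split-exactness with contracting homotopy I obtain canonical decompositions $\underline{C}^r(G)_{X_m}\cong\underline{Z}^r(G)_{X_m}\times_{X_m}\underline{Z}^{r+1}(G)_{X_m}$ for all $r\geq0$, under which $d^{i-1}_{X_m}$ becomes the projection $\underline{Z}^{i-1}(G)_{X_m}\times_{X_m}\underline{Z}^i(G)_{X_m}\to\underline{Z}^i(G)_{X_m}$. I would then prove smoothness of every $\underline{Z}^r(G)_{X_m}$ by induction on $r$, starting from $\underline{Z}^0(G)_{X_m}\cong G_{X_m}$ and using the following extraction principle: if $M\cong A\times_S B$ is smooth over $S$ and $A\to S$ is smooth (hence, carrying the identity section, faithfully flat), then $B\to S$ is smooth. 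Here flatness of $B$ follows because $A\to S$ faithfully flat reflects flatness of the $\mathcal{O}_S$-module $\mathcal{O}_B$ from that of $\mathcal{O}_A\otimes\mathcal{O}_B$; smoothness of the geometric fibres of $B$ follows since a product of group schemes over a field is smooth only if each factor is (a product over a field is geometrically reduced only if each factor is); and finite presentation is inherited through the section. Applying this to $\underline{C}^r(G)_{X_m}\cong\underline{Z}^r(G)_{X_m}\times_{X_m}\underline{Z}^{r+1}(G)_{X_m}$ peels off smoothness of $\underline{Z}^{r+1}(G)_{X_m}$. Once $\underline{Z}^{i-1}(G)_{X_m}$ is smooth, $d^{i-1}_{X_m}$ is the base change of $\underline{Z}^{i-1}(G)_{X_m}\to X_m$ along $\underline{Z}^i(G)_{X_m}\to X_m$, hence smooth; fppf descent along $f_m$ then gives smoothness of $d^{i-1}$ over $X$.

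The step I expect to be the main obstacle is the first: justifying that $\mathrm{Res}_{W_r/X}(G\times_X W_r)$ is representable by a smooth algebraic space for a general smooth commutative group scheme $G$, without assuming $G$ affine or quasi-projective. This generality is exactly what is needed to remove Kato's affineness hypothesis, and it is why the statement is phrased in terms of algebraic spaces: for quasi-projective $G$ one gets a scheme, but in general one must appeal to representability of restriction of scalars of algebraic spaces along finite locally free morphisms and verify carefully that smoothness is preserved. The homotopy-theoretic and descent parts are, by contrast, formal once the representing objects and the naturality of the contracting homotopy are in place.
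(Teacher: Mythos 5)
Your first paragraph matches the paper's proof exactly: the identification $\underline{C}^i(G)=\mathring{\pi}_*G$ for $\pi\colon X_m\times_{\Spec\Z}H_m^i\to X$ and representability of the Weil restriction by a group algebraic space (via \cite[Chap.~V, 1.4(a)]{mil1}), with $\underline{Z}^i(G)$ representable as a kernel, is how the paper begins. But your smoothness argument has a genuine gap at the base-change step. The base change of the algebraic space $\underline{C}^i(G)$ along $\mathring{X}_m\to\mathring{X}$ represents the functor $T\mapsto G(T\times_{\mathring{X}}\mathring{X}_m\times H_m^i)$ on schemes over $\mathring{X}_m$ (a strict $Y$ over $X$ yields the \emph{classical} fibre product, since strict base change of the Kummer map computes on underlying schemes). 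By contrast, the complex on which your tautological section produces a contracting homotopy is the \v{C}ech complex of the cover $Y'_m=Y'\times_XX_m\to Y'$ for $Y'$ strict over $X_m$, whose terms are $G(\mathring{Y}'\times_{\Spec\Z}H_m^{i+1})$, computed with \emph{fs log} fibre products. These two functors differ, because Kummer fibre products do not commute with this base change: $\mathring{X}_m\times_{\mathring{X}}\mathring{X}_m\neq \mathring{X}_m\times_{\Spec\Z}H_m$ in general. Already for the standard log point over a field $k$ with $P=\N$ one gets $k[s,s']/(s^m,s'^m)$ versus $k[s,u]/(s^m,u^m-1)$, and the canonical comparison map is not an isomorphism. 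Relatedly, your section $(\mathrm{id},\mathrm{str})$ is a morphism of log schemes only when $Y'$ carries the log structure pulled back from $X_m$, which is precisely not the situation describing the fppf base change of the representing spaces. So the split exactness you establish lives on the wrong complex, and there is nothing for the final fppf-descent step to descend. (A secondary issue: descent would also need $\mathring{f}_m$ to be flat, and $\Z[P]\to\Z[P^{1/m}]$ is not flat for a general fs monoid $P$ with $P^\times=1$, e.g.\ for non-free simplicial cones; it is flat when $P$ is free.)

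The paper avoids base change altogether and instead verifies the infinitesimal lifting criterion for $d^{i-1}\colon\underline{C}^{i-1}(G)\to\underline{Z}^i(G)$ directly over $X$: for a square-zero thickening $T_0\subset T$, smoothness of $G$ gives surjections $\underline{C}^i(G)(T)\to\underline{C}^i(G)(T_0)$ with kernel the \v{C}ech complex $C^{\cdot}(T_m/T,N)$ of the congruence-kernel functor $N$, and a diagram chase reduces the lifting problem to $\check{H}^i(T_m/T,N)=0$ for $i\geq1$. This vanishing is proved by identifying $C^{\cdot}(T_m/T,N)$ with the standard complex computing the Hochschild cohomology of the $H_m$-module $\tilde{N}$, which is coherent (a Lie-algebra-valued module), so the cohomology vanishes because $H_m$ is diagonalizable \cite[Exp.~I, Thm.~5.3.3]{sga3-1}. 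Note that this last point is exactly the ingredient your homotopy was trying to substitute for: the kernel complex does split, but the splitting comes from diagonalizability of $H_m$ acting on a coherent module, not from a section of the cover, which does not exist over $X$. If you want to salvage your outline, you should drop the passage to $X_m$ and run your splitting argument on the kernel complex $C^{\cdot}(T_m/T,N)$, at which point you reproduce the paper's computation; your Weil-restriction smoothness remark and the extraction principle are fine but are not needed once the lifting criterion is checked this way.
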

\begin{proof}
By definition $\underline{C}^{i}(G)$ is the functor
$$(\mr{st}/X)\rightarrow\mr{Ab}, Y\mapsto G(Y_m\times_Y\cdots\times_YY_m)=G(Y\times_X X_m\times_{\Spec\Z}H_m^i),$$
that is, it is $\mathring{\pi}_*G$, where $\pi$ denotes the map $X_m\times_{\Spec\Z}H_m^i\rightarrow X$ and $\mathring{\pi}$ denotes the underlying map of schemes of $\pi$. Since $\mathring{\pi}$ is clearly finite and faithful flat, $\underline{C}^{i}(G)$ is therefore represented by the Weil restriction of scalars of $G\times_X(X_m\times_{\Spec\Z}H_m^i)$, which is representable by a group algebraic space by \cite[Chap. V, 1.4 (a)]{mil1}. Hence the functor $\underline{Z}^i(G)$, as the kernel of a map $d^i:\underline{C}^i(G)\rightarrow \underline{C}^{i+1}(G)$ of group algebraic spaces over $X$, is representable by a group algebraic space over $X$.

Now we prove the smoothness of $d^{i-1}:\underline{C}^{i-1}(G)\rightarrow\underline{Z}^i(G)$. It suffices to show that, for any affine $X$-scheme $T$ and closed subscheme $T_0$ of $T$ defined by an ideal $I$ of square zero, any $z\in\underline{Z}^i(G)(T)$ whose image $z_0$ in $\underline{Z}^i(G)(T_0)$ arises from an element $c_0\in\underline{C}^{i-1}(G)(T_0)$, there exists $c\in\underline{C}^{i-1}(G)(T)$ such that $c$ maps to $c_0$. Let $N$ be the functor 
$$(\mr{Sch}/T)\rightarrow\mr{Ab},Y\mapsto\mr{ker}(G(Y)\rightarrow G(Y\times_TT_0)).$$
Let $C^{\cdot}(T_m/T,N)$ be the \v{C}ech complex for the Kummer flat cover $T_m:=T\times_XX_m\rightarrow T$ with coefficients in $N$.
Then we have the following commutative diagram
$$\xymatrix{
0\ar[r] &C^{i-1}(T_m/T,N)\ar[r]\ar[d] &\underline{C}^{i-1}(G)(T)\ar[r]\ar[d] &\underline{C}^{i-1}(G)(T_0)\ar[d]\ar[r] &0  \\
0\ar[r] &C^{i}(T_m/T,N)\ar[r]\ar[d] &\underline{C}^{i}(G)(T)\ar[r]\ar[d] &\underline{C}^{i}(G)(T_0)\ar[d]\ar[r] &0  \\
0\ar[r] &C^{i+1}(T_m/T,N)\ar[r] &\underline{C}^{i+1}(G)(T)\ar[r] &\underline{C}^{i+1}(G)(T_0)\ar[r] &0  \\
}$$
with exact rows, where the exactness property at the right hand side follows from the smoothness of $G$. Let $Z^i(T_m/T,N)$ be the kernel of 
$$C^{i}(T_m/T,N)\rightarrow C^{i+1}(T_m/T,N),$$
then the above diagram induces the following commutative diagram
$$\xymatrix{
0\ar[r] &C^{i-1}(T_m/T,N)\ar[r]\ar[d] &\underline{C}^{i-1}(G)(T)\ar[r]\ar[d] &\underline{C}^{i-1}(G)(T_0)\ar[d]\ar[r] &0  \\
0\ar[r] &Z^{i}(T_m/T,N)\ar[r] &\underline{Z}^{i}(G)(T)\ar[r] &\underline{Z}^{i}(G)(T_0) \\
}$$
with exact rows. By an easy diagram chasing, for the existence of $c$, it suffices to show that the map $C^{i-1}(T_m/T,N)\rightarrow Z^{i}(T_m/T,N)$ is surjective, that is, that $\check{H}^i(T_m/T,N)=0$, $i\geq 1$. 

To finish the proof, we compute $\check{H}^i(T_m/T,N)$ for $i\geq 1$. Let $\tilde{N}$ be the functor 
$$(\mr{Sch}/T)\rightarrow\mr{Ab},U\mapsto N(U\times_TT_m).$$
Since $T_m\times_TT_m\cong T_m\times H_m$, the group scheme $H_m$ acts on the functor $\tilde{N}$, and the \v{C}ech complex $C^{\cdot}(T_m/T,N)$ can be identified with the standard complex $C^{\cdot}(H_m,\tilde{N})$ computing the cohomology of the $H_m$-module $\tilde{N}$. We claim that $\tilde{N}$ is coherent, then the vanishing of $\check{H}^i(T_m/T,N)$ follows from $H_m$ being diagonalizable by \cite[Exp. I, Thm. 5.3.3]{sga3-1}. For any $U\in (\mr{Sch}/T)$, the smoothness of $G$ implies that 
\begin{align*}
\tilde{N}(U)=N(U\times_TT_m)=&\mr{ker}(G(U\times_TT_m)\rightarrow G(U\times_TT_m\times_TT_0))  \\
=&\mr{Lie}(G)\otimes_{\Gamma(T,\mc{O}_T)}\Gamma(T_m,I\mc{O}_{T_m})\otimes_{\Gamma(T,\mc{O}_T)}\Gamma(U,\mc{O}_U).
\end{align*}
Therefore $\tilde{N}$ is coherent.
\end{proof}

The following lemma is analogous to \cite[Proof of Thm. III.3.9, Step 3 plus Rmk. 3.11 (b)]{mil1}.

\begin{lem}[Key Lemma]\label{lem1.3}
Let $X, X_m, f_m, P, P^{1/m}$ be as in Lemma \ref{lem1.2}. We further assume that the underlying scheme of $X$ is $\Spec A$ with $A$ a Henselian local ring, and let $x$ be the closed point of $X$. We regard $x$ as an fs log scheme with respect to the induced log structure, and $x_m:=x\times_{\Spec\Z[P]}\Spec\Z[P^{1/m}]$ is obviously identified with $X_m\times_Xx$ canonically. Let ? be either kfl or k\'et, then the canonical map 
$$\check{H}_{?}^i(X_m/X,G)\rightarrow \check{H}_{?}^i(x_m/x,G)$$
is an isomorphism for all $i>0$.
\end{lem}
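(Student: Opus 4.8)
The plan is to reinterpret both Čech groups through the complex of group algebraic spaces $\underline{C}^{\cdot}(G)$ from Lemma \ref{lem1.2} and then to lift cochains from the closed fibre to all of $X$ using smoothness together with the Henselian hypothesis. Writing $\underline{C}^i:=\underline{C}^i(G)$ and $\underline{Z}^i:=\underline{Z}^i(G)=\ker(d^i)$, the definition of the Čech complex gives $\check{H}^i_?(X_m/X,G)=H^i(\underline{C}^{\cdot}(G)(X))$, and since $x_m\times_x x_m\cong x_m\times_{\Spec\Z}H_m$ one likewise gets $\check{H}^i_?(x_m/x,G)=H^i(\underline{C}^{\cdot}(G)(x))$, where $x\in(\mr{st}/X)$ carries the induced log structure. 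So it suffices to show that restriction along $x\hookrightarrow X$ induces an isomorphism $H^i(\underline{C}^{\cdot}(G)(X))\to H^i(\underline{C}^{\cdot}(G)(x))$ for $i>0$.

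Two smoothness inputs will drive the argument. First, $\underline{C}^i$ is the Weil restriction of the smooth group scheme $G\times_X(X_m\times_{\Spec\Z}H_m^i)$ along a finite locally free morphism, hence is itself smooth over $X$; consequently $\underline{C}^i(X)\to\underline{C}^i(x)$ is surjective because $A$ is Henselian local. Second, by Lemma \ref{lem1.2} each $d^{i-1}\colon\underline{C}^{i-1}\to\underline{Z}^i$ with $i\geq1$ is a smooth morphism of algebraic spaces over $X$. I would then use repeatedly the following lifting property over the Henselian local base: for a smooth morphism $\mc{Y}\to\mc{Z}$ of algebraic spaces over $X$, any $z\in\mc{Z}(X)$ together with a point $\bar{y}\in\mc{Y}(x)$ lying over $z|_x$ admits a lift $y\in\mc{Y}(X)$ with image $z$ and $y|_x=\bar{y}$. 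This reduces, by base change along $z$, to the statement that a smooth algebraic space over a Henselian local ring lifts rational points from the closed fibre.

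Granting this, injectivity is immediate: if $z\in\underline{Z}^i(X)$ has $z|_x=d^{i-1}\bar{c}$ for some $\bar{c}\in\underline{C}^{i-1}(x)$, then applying the lifting property to the smooth map $d^{i-1}\colon\underline{C}^{i-1}\to\underline{Z}^i$ with the point $\bar{c}$ over $z|_x$ produces $c\in\underline{C}^{i-1}(X)$ with $d^{i-1}c=z$, so $[z]=0$ already over $X$. For surjectivity, I would start from a cocycle $\bar{z}\in\underline{Z}^i(x)$, lift it to a cochain $c\in\underline{C}^i(X)$ by smoothness of $\underline{C}^i$, and set $w:=d^ic\in\underline{Z}^{i+1}(X)$; then $w|_x=d^i\bar{z}=0$. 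Applying the lifting property to the smooth map $d^i\colon\underline{C}^i\to\underline{Z}^{i+1}$ with the point $0\in\underline{C}^i(x)$ over $w|_x=0$ yields $c'\in\underline{C}^i(X)$ with $d^ic'=w$ and $c'|_x=0$, so that $z:=c-c'$ is a cocycle in $\underline{Z}^i(X)$ with $z|_x=\bar{z}$, and $[z]$ maps to $[\bar{z}]$.

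The hard part will be the lifting property itself in the setting of algebraic spaces rather than schemes: one must know that a smooth morphism of algebraic spaces over the Henselian local ring $A$ lifts a given rational point of the closed fibre, which is precisely where the Henselian hypothesis and the representability furnished by Lemma \ref{lem1.2} are essential, and which is exactly the point handled by \cite[Rmk. 3.11 (b)]{mil1}. The remaining steps are formal cochain manipulations. I would also note that for $?=\mr{k\acute{e}t}$ one restricts to $m$ coprime to the residue characteristic so that $X_m\to X$ is a Kummer étale cover, the Čech complex being identical in either topology.
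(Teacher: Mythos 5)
Your proposal is correct and follows essentially the same route as the paper: both rest on Lemma \ref{lem1.2} (representability and smoothness of $d^{i-1}\colon\underline{C}^{i-1}(G)\to\underline{Z}^i(G)$, together with smoothness of the Weil restrictions $\underline{C}^i(G)$) combined with Henselian lifting of sections of smooth algebraic spaces through a prescribed point of the closed fibre, exactly the point covered by \cite[Rmk. 3.11 (b)]{mil1}. The only difference is packaging --- the paper establishes levelwise surjectivity of $C^{\cdot}(X_m/X,G)\to C^{\cdot}(x_m/x,G)$ and then proves the kernel complex is exact, while you run the equivalent injectivity/surjectivity diagram chase directly on cohomology classes.
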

\begin{proof}
Since $G$ is smooth and $X_m\times_X\cdots\times_XX_m\cong X_m\times_{\Spec\Z} H_m^r$ is a disjoin union of spectra of Henselian local rings, the maps
$$C^{i}(X_m/X,G)\rightarrow C^{i}(x_m/x,G)$$
are surjective by \cite[Chap. I, 4.13]{mil1}. Thus we are reduced to show that the complex $\mr{ker}(C^{\cdot}(X_m/X,G)\rightarrow C^{\cdot}(x_m/x,G))$ is exact. Let 
$$z\in \mr{ker}(C^{i}(X_m/X,G)\xrightarrow{d^i} C^{i+1}(X_m/X,G))$$
have image $z_0=0$ in $C^{i}(x_m/x,G)$. We seek a $c\in C^{i-1}(X_m/X,G)$ with $c_0=0$ such that $d^{i-1}(c)=z$. By Lemma \ref{lem1.2}, we know that $(d^{i-1})^{-1}(z)$ is representable by a smooth scheme over $X$. But $(d^{i-1})^{-1}(z)$ has a section over $x$, namely the zero section, and as $X$ is Henselian, this lifts to a section of $(d^{i-1})^{-1}(z)$ over $X$. This finishes the proof.
\end{proof}

With the help of Lemma \ref{lem1.3}, we can give a slightly different proof of Kato's Theorem (\cite[Thm. 4.1]{kat2}) which describes $R^1\varepsilon_{\mr{fl}*}G$ for a smooth affine group scheme $G$. This alternative proof allows us to remove the affinity condition on $G$ from \cite[Thm. 4.1 (ii)]{kat2}.

\begin{thm}\label{thm1.4}
Let $X=\Spec A$ be an fs log scheme with $A$ a noetherian strictly Henselian local ring, $x$ the closed point of $X$, $p$ the characteristic of the residue field of $A$, and we fix a chart $P\rightarrow M_X$ satisfying $P\cong M_{X,x}/\mc{O}_{X,x}^{\times}$. Let ? be either fl or \'et, and $\varepsilon_?:(\mr{fs}/X)_{\mr{k}?}\rightarrow (\mr{fs}/X)_{?}$  the canonical ``forgetful'' map of sites, and let $G$ be a smooth commutative group scheme over $X$ endowed with the log structure induced from $X$. Then we have canonical isomorphisms 
$$H_{\mr{k}?}^1(X,G)\cong\check{H}_{\mr{k}?}^1(X,G)\cong
\begin{cases}
\varinjlim_{(n,p)=1}\mr{Hom}_{X}(\Z/n\Z(1),G)\otimes_{\Z}P^{\mr{gp}},&\text{ if ?=\'et}  \\
\varinjlim_{n}\mr{Hom}_{X}(\Z/n\Z(1),G)\otimes_{\Z}P^{\mr{gp}}, &\text{ if ?=fl}
\end{cases}.$$
\end{thm}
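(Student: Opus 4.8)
The plan is to compute $H^1_{\mr{k}?}(X,G)$ by descending along the tower of Kummer covers $\{f_m\colon X_m\to X\}$ and then transporting the computation to the closed point. First, Corollary \ref{cor1.1} supplies the isomorphism $\check{H}^1_{\mr{k}?}(X,G)\cong H^1_{\mr{k}?}(X,G)$, so it suffices to identify the \v{C}ech group. The covers $X_m\to X$ are cofinal among Kummer covers of $X$ --- in the \'etale case this is built into the spectral sequence (\ref{eq1.1}) through the vanishing of Proposition \ref{prop1.2}(2), and in the flat case it is \cite[Lem. 4.6]{kat2} --- so that $H^1_{\mr{k}?}(X,G)\cong\varinjlim_m\check{H}^1_{\mr{k}?}(X_m/X,G)$, the index $m$ ranging over $\N'=\{(m,p)=1\}$ when ?=\'et and over $\N$ when ?=fl.

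Next I would invoke the Key Lemma (Lemma \ref{lem1.3}) to replace each $\check{H}^1_{\mr{k}?}(X_m/X,G)$ by $\check{H}^1_{\mr{k}?}(x_m/x,G)$, where $x=\Spec k$ is the closed point with its induced log structure and $k$ is separably closed since $A$ is strictly Henselian. This is exactly the step that makes the affineness hypothesis on $G$ unnecessary: where Kato descends from the general case to the complete case (which forced $G$ to be affine), Lemma \ref{lem1.3} carries the entire \v{C}ech group to the Artinian base $x$ in one stroke.

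The heart of the argument --- and the main obstacle --- is then the explicit computation of $\check{H}^1_{\mr{k}?}(x_m/x,G)$ over the field $k$. Using $x_m\times_x\cdots\times_x x_m\cong x_m\times_{\Spec\Z}H_m^{\bullet}$ with $H_m=\Spec\Z[(P^{1/m})^{\mr{gp}}/P^{\mr{gp}}]$, the \v{C}ech complex becomes the standard complex computing the cohomology of the diagonalizable group scheme $H_m$ acting on $G(x_m)$, exactly as in the proof of Lemma \ref{lem1.2}. I would show that this first cohomology is carried by the homomorphisms of group schemes $H_m\to G$, i.e. $\check{H}^1_{\mr{k}?}(x_m/x,G)\cong\mathrm{Hom}_x(H_m,G)$. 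In the \'etale case $H_m$ is the finite constant group $(\Z/m\Z)^r$ over $k$ and this is ordinary group cohomology; in the flat case the infinitesimal part of $H_m$ must be handled through the representation theory of diagonalizable groups, as in the appeal to \cite[Exp. I]{sga3-1} within Lemma \ref{lem1.2}. This is precisely Kato's Artinian computation \cite[\S 4.11]{kat2}, which needs no affineness and goes through for any smooth commutative $G$ over the field $k$.

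Finally, Cartier duality identifies $\mathrm{Hom}_x(H_m,G)$ with $\mathrm{Hom}_x(\mu_m,G)\otimes_{\Z}P^{\mr{gp}}$: since $(P^{1/m})^{\mr{gp}}/P^{\mr{gp}}\cong P^{\mr{gp}}/mP^{\mr{gp}}$, an element $v\in P^{\mr{gp}}$ gives by reduction modulo $m$ a homomorphism $H_m\to\mu_m$, and composition with $\mathrm{Hom}_x(\mu_m,G)$ produces the asserted tensor factor. It remains to identify $\mathrm{Hom}_x(\mu_m,G)$ with $\mathrm{Hom}_X(\Z/m\Z(1),G)$, which follows from the rigidity of homomorphisms out of the multiplicative-type group $\mu_m$ together with the Henselian property of $A$, and to check that all these isomorphisms are compatible with the transition maps as $m$ varies. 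Passing to the colimit then yields the stated description, with index set $\N'$ in the \'etale case and $\N$ in the flat case.
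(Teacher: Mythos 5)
Your proposal is correct and follows essentially the same route as the paper: the paper's proof simply points to Nizio{\l}'s proof of \cite[Prop.~3.13]{niz1} (itself Kato's argument via the tower $X_m$, the limit formula $H^1_{\mr{k}?}(X,G)\cong\varinjlim_m\check{H}^1_{\mr{k}?}(X_m/X,G)$, and the artinian computation of \cite[\S 4.11]{kat2}) and substitutes Lemma \ref{lem1.3} for the complete-case descent step that required $G$ affine, which is exactly the structure you reconstruct. Your expansion is faithful in all the key points, including the correct observation that in the flat case one must invoke \cite[Lem.~4.6]{kat2} rather than the flat analogue of Proposition \ref{prop1.2}(2) (which would be circular, as it rests on Theorem \ref{thm1.5}), and the identification $\mr{Hom}_x(H_m,G)\cong\mr{Hom}_x(\mu_m,G)\otimes_{\Z}P^{\mr{gp}}\cong\mr{Hom}_X(\Z/m\Z(1),G)\otimes_{\Z}P^{\mr{gp}}$ via diagonalizability and the Henselian rigidity that the paper handles through Lemma \ref{lemA.1} and Gabber's theorem.
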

\begin{proof}
In the proof of \cite[Prop. 3.13]{niz1}, the affineness of $G$ is only used in the paragraph before \cite[Cor. 3.17]{niz1}. Its use is to extend from the complete local case (see \cite[Lem. 3.15]{niz1}) to the Henselian local case. Note that the proof of \cite[Lem. 3.15]{niz1} deals with artinian local case first, then pass to complete local case. With the help of Lemma \ref{lem1.3}, we can pass from artinian local case directly to Henselian local case, hence no affineness of $G$ is needed.
\end{proof}

\begin{thm}\label{thm1.5}
Let $X$ be a locally noetherian fs log scheme, and let $G$ be either a finite flat group scheme over the underlying scheme of $X$ or a smooth commutative group scheme over the underlying scheme of $X$. We endow $G$ with the induced log structure from $X$. Then we have 
$$R^1\varepsilon_{\mr{fl}*} G=
\varinjlim_{n}\mc{H}om_{X}(\Z/n\Z(1),G)\otimes_{\Z}(\Gml/\Gm)_{X_{\mr{fl}}}.$$
\end{thm}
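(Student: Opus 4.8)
The plan is to construct the map \eqref{eq0.3} globally out of a Kummer sequence and a cup product, and then to prove it is an isomorphism by reducing, via localization and spreading out, to the strictly Henselian local situation, where it is identified with the isomorphism of Theorem \ref{thm1.4}.

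First I would construct $\phi\colon\varinjlim_n\mc{H}om_X(\Z/n\Z(1),G)\otimes_\Z(\Gml/\Gm)_{X_{\mr{fl}}}\to R^1\varepsilon_{\mr{fl}*}G$. Writing $\mu_n=\Z/n\Z(1)$, the sequence $0\to\mu_n\to\Gml\xrightarrow{n}\Gml\to0$ is exact on $\Xkfl$ (sections of $M^{\mr{gp}}$ acquire $n$-th roots Kummer-flat-locally, and $\Gml[n]=\Gm[n]=\mu_n$), so applying $R\varepsilon_{\mr{fl}*}$ and using Kato's vanishing $R^1\varepsilon_{\mr{fl}*}\Gml=0$ from Theorem \ref{thm0.5} produces a surjection $\Gml\twoheadrightarrow R^1\varepsilon_{\mr{fl}*}\mu_n$ whose restriction to $\Gm$ vanishes, because a unit already has $n$-th roots in the classical flat topology; this yields a canonical map $(\Gml/\Gm)_{X_{\mr{fl}}}\to R^1\varepsilon_{\mr{fl}*}\mu_n$. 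Cup product with this class, followed by the evaluation pairing $\mu_n\otimes\mc{H}om_X(\mu_n,G)\to G$, gives $\mc{H}om_X(\mu_n,G)\otimes_\Z(\Gml/\Gm)_{X_{\mr{fl}}}\to R^1\varepsilon_{\mr{fl}*}G$, and passing to the colimit over $n$ defines $\phi$.

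Next I would check that $\phi$ is an isomorphism of sheaves on $(\mr{st}/X)_{\mr{fl}}$. For injectivity it suffices to see that the kernel sheaf, which is automatically an \'etale sheaf with sections $\ker\phi_U$, has vanishing \'etale stalks; these stalks are computed over the strict henselizations $\Spec\mc{O}_{X,\bar x}^{\mr{sh}}$ by continuity, so the point is that $\phi$ is injective on sections over such bases. For surjectivity I would use that $R^1\varepsilon_{\mr{fl}*}G$ is the fppf-sheafification of $U\mapsto H^1_{\mr{kfl}}(U,G)$: a local section lifts, after an fppf cover and a localization at a point, to a genuine class over some $\Spec\mc{O}_{X,\bar x}^{\mr{sh}}$, and spreading out a preimage found there back to an \'etale neighborhood shows the section is fppf-locally in the image of $\phi$. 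In both steps one is thus reduced to $X=\Spec A$ with $A$ noetherian strictly Henselian local; since $M_X$ is fs and $A$ is strictly Henselian, after this localization one may choose a chart $P\to M_X$ with $P\xrightarrow{\cong}M_{X,x}/\mc{O}_{X,x}^\times$, and then $\Gamma(X,(\Gml/\Gm)_{X_{\mr{fl}}})=P^{\mr{gp}}$.

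On such a base the two sides of $\phi$ are computed by Theorem \ref{thm1.4}. The target sheaf has sections $\varinjlim_n\mr{Hom}_X(\mu_n,G)\otimes_\Z P^{\mr{gp}}$, while the Leray spectral sequence for $\varepsilon_{\mr{fl}}$, together with the vanishing of $H^1_{\mr{fl}}(X,G)$ and $H^2_{\mr{fl}}(X,G)$ over the strictly Henselian local base (valid for $G$ smooth, where flat cohomology agrees with \'etale cohomology, which vanishes in positive degrees over a strictly Henselian local base), gives $\Gamma(X,R^1\varepsilon_{\mr{fl}*}G)\cong H^1_{\mr{kfl}}(X,G)$. Theorem \ref{thm1.4} then supplies the isomorphism $\varinjlim_n\mr{Hom}_X(\mu_n,G)\otimes_\Z P^{\mr{gp}}\xrightarrow{\cong}H^1_{\mr{kfl}}(X,G)$ in the smooth case; the finite flat case (where $G$ is affine) is handled either by the corresponding local computation in \cite{niz1} or by d\'evissage to the smooth case through a short exact sequence $0\to G\to H\to Q\to0$ with $H,Q$ smooth commutative. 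I expect the main obstacle to be this last compatibility: one must verify that $\phi$, built from the Kummer boundary and the cup product, induces on sections precisely the isomorphism of Theorem \ref{thm1.4} --- which is described through the explicit \v{C}ech cocycles for the covers $X_n\to X$ --- and not merely an abstract isomorphism between two presentations of the same group. Reconciling the flat-topology sheafification and the Leray edge map with this \v{C}ech description, and tracking the class of $X_n\to X$ through both, is where the real work lies.
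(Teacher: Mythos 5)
Your proposal is correct and takes essentially the same route as the paper: construct Kato's canonical comparison map (your Kummer-sequence construction via $0\to\Z/n\Z(1)\to\Gml\xrightarrow{n}\Gml\to0$ and $R^1\varepsilon_{\mr{fl}*}\Gml=0$ agrees with Kato's pushout-of-torsors definition, since the boundary map sends a section of $M^{\mr{gp}}$ to its torsor of $n$-th roots), reduce by stalkwise/limit arguments to the strictly Henselian local case with a neat chart, and conclude by Theorem \ref{thm1.4}, with the finite flat case delegated to \cite{kat2}. The compatibility issue you flag at the end is real but is precisely what the paper's citation ``as \cite[Thm. 3.12]{niz1} follows from \cite[Prop. 3.13]{niz1}'' encapsulates: the isomorphism of Theorem \ref{thm1.4} is by construction induced by this very map on the \v{C}ech complexes for the covers $X_n\to X$, so no separate reconciliation is needed.
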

\begin{proof}
The statement for the finite flat case is the same as in \cite[Thm. 4.1]{kat2}, we only need to deal with the other case, which follows from Theorem \ref{thm1.4} in the same way as \cite[Thm. 3.12]{niz1} follows from \cite[Prop. 3.13]{niz1}.
\end{proof}

The following proposition about $H_{\mr{k}?}^1(X,G)$ will be used in next subsection.

\begin{prop}\label{prop1.3}
Let the notation and the assumptions be as in Theorem \ref{thm1.4}. Then we have $\varinjlim_nH_{\mr{kfl}}^1(X_{n},G)=0$ and $\varinjlim_{(n,p)=1}H_{\mr{k\acute{e}t}}^1(X_{n},G)=0$
\end{prop}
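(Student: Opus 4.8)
The plan is to apply Theorem \ref{thm1.4} and Corollary \ref{cor1.2} not to $X$ itself but to the covers $X_n$, after observing that each $X_n$ is again of the local type to which these results apply. First I would record the structure of $X_n$. Since $P$ is finitely generated, $\Z[P]\to\Z[P^{1/n}]$ is finite, so the underlying scheme of $X_n=X\times_{\Spec\Z[P]}\Spec\Z[P^{1/n}]$ is finite over $\Spec A$; as $A$ is strictly Henselian local, it is a finite disjoint union $X_n=\coprod_j X_n^{(j)}$ of spectra of noetherian strictly Henselian local rings, all of whose closed points $x_n^{(j)}$ lie over $x$ and hence have residue characteristic $p$. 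The log structure of $X_n$ is the one attached to the chart $P^{1/n}$, which is sharp because $P^\times=1$; by construction the chart induces on each component an isomorphism $P^{1/n}\xrightarrow{\cong}M_{X_n^{(j)},\,x_n^{(j)}}/\mc{O}^\times$. Thus each $X_n^{(j)}$ satisfies the hypotheses of both Corollary \ref{cor1.2} and Theorem \ref{thm1.4}, with $P$ replaced by $P^{1/n}$ and $p$ unchanged.

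For the Kummer \'etale statement I would argue directly with Corollary \ref{cor1.2}. The key bookkeeping is the identity
$$X\times_{\Spec\Z[P]}\Spec\Z[P^{1/n}]\times_{\Spec\Z[P^{1/n}]}\Spec\Z[P^{1/nm}]=X\times_{\Spec\Z[P]}\Spec\Z[P^{1/nm}],$$
which shows that the $m$-th Kummer cover of $X_n$ (in the sense of Corollary \ref{cor1.2} applied to $X_n$) is exactly $X_{nm}$. Given $\gamma\in H^1_{\mr{k\acute{e}t}}(X_n,G)=\bigoplus_j H^1_{\mr{k\acute{e}t}}(X_n^{(j)},G)$ with $(n,p)=1$, Corollary \ref{cor1.2} applied to each component yields an integer $m_j$ coprime to $p$ killing $\gamma|_{X_n^{(j)}}$ after pullback; taking $m$ to be a common multiple of the $m_j$ (still coprime to $p$), the class $\gamma$ dies in $H^1_{\mr{k\acute{e}t}}(X_{nm},G)$. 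Since every class appears at some finite level, this gives $\varinjlim_{(n,p)=1}H^1_{\mr{k\acute{e}t}}(X_n,G)=0$.

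For the Kummer flat statement Corollary \ref{cor1.2} is unavailable (it is special to the \'etale topology, where a classically \'etale cover of a strictly Henselian local scheme splits), so I would instead use the explicit computation. Theorem \ref{thm1.4}, applied to each component of $X_n$, gives
$$H^1_{\mr{kfl}}(X_n,G)\cong\bigoplus_j\,\varinjlim_{m}\mr{Hom}_{X_n^{(j)}}(\Z/m\Z(1),G)\otimes_{\Z}(P^{1/n})^{\mr{gp}},$$
a torsion group, since any class lies in the image of a summand $\mr{Hom}_{X_n^{(j)}}(\Z/m\Z(1),G)\otimes_{\Z}(P^{1/n})^{\mr{gp}}$ for some $m$ and is hence $m$-torsion. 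I would then identify the transition map $H^1_{\mr{kfl}}(X_n,G)\to H^1_{\mr{kfl}}(X_{nk},G)$, i.e.\ pullback along $X_{nk}\to X_n$, with the map induced on the right-hand tensor factor by the inclusion $(P^{1/n})^{\mr{gp}}\hookrightarrow(P^{1/nk})^{\mr{gp}}$. Under the canonical isomorphisms $(P^{1/n})^{\mr{gp}}\cong P^{\mr{gp}}\cong(P^{1/nk})^{\mr{gp}}$ this inclusion becomes multiplication by $k$, because $P^{1/n}\hookrightarrow P^{1/nk}$ is identified with $P\xrightarrow{k}P$. Granting this, an $m$-torsion class maps under the transition to $X_{nm}$ to $m$ times itself, i.e.\ to $0$; hence it dies in the colimit and $\varinjlim_n H^1_{\mr{kfl}}(X_n,G)=0$.

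The hard part will be this last identification: that the isomorphism of Theorem \ref{thm1.4} is functorial in the covers $X_n\to X$ and turns pullback into multiplication by the ratio. Concretely, the class attached to $a\in(P^{1/n})^{\mr{gp}}$ arises, via Kummer theory for $\Gml$, from the Galois cover $X_{nm}\to X_n$ with group $H_m$, and pulling it back along $X_{nk}\to X_n$ replaces $a$ by its image under $(P^{1/n})^{\mr{gp}}\hookrightarrow(P^{1/nk})^{\mr{gp}}$, which is $k\cdot a$ in the common coordinate $P^{\mr{gp}}$; carrying this through the cup-product construction underlying Theorem \ref{thm1.4} (equivalently, through the argument of \cite[Prop. 3.13]{niz1}) is what requires care. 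Everything else reduces either to the structural description of $X_n$ above, or to the elementary fact that a filtered colimit of torsion abelian groups along multiplication-by-$k$ maps, cofinal for the divisibility order, vanishes.
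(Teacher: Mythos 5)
Your proposal is correct, and on the Kummer flat half it coincides with the paper's own proof: the paper likewise applies Theorem \ref{thm1.4} to each $X_n$ and records that, under the resulting isomorphisms, the transition map $H_{\mr{kfl}}^1(X_n,G)\rightarrow H_{\mr{kfl}}^1(X_{mn},G)$ is induced by the inclusion $P^{1/n}\hookrightarrow P^{1/mn}$, i.e. by multiplication by $m$ on $P^{\mr{gp}}$, so that torsionness of $\mr{Hom}_{X_n}(\Z/r\Z(1),G)$ forces the colimit to vanish. The functoriality you flag as ``the hard part'' is precisely the commutative diagram the paper asserts without detailed verification, so your proof is no less complete than the paper's on this point. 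Where you genuinely diverge is the Kummer \'etale half: the paper dispatches it ``in the same way'' as the flat case, whereas you instead apply Corollary \ref{cor1.2} to $X_n$ together with the bookkeeping identity $(X_n)_m=X_{nm}$, so that every class dies along a transition map of the tower itself. This is a softer argument --- it uses no explicit description of $H^1$, only Proposition \ref{prop1.1}, the cofinality result \cite[Prop.~2.15]{niz1}, and the splitting of classically \'etale covers over a strictly Henselian base --- and it applies verbatim to any sheaf $F$ and any cohomological degree $i$, not just to $G$ in degree $1$; its drawback, as you correctly note, is that it is intrinsically \'etale and cannot replace the computation in the flat case. One small simplification: your disjoint-union decomposition of $X_n$ is superfluous. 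Since $P^{\times}=1$, every nontrivial element of $P^{1/n}$ becomes nilpotent in the closed fibre, so the fibre ring is Artinian local with residue field that of $A$; hence $A\otimes_{\Z[P]}\Z[P^{1/n}]$ is itself a noetherian strictly Henselian \emph{local} ring, and $X_n$ satisfies the hypotheses of Corollary \ref{cor1.2} and Theorem \ref{thm1.4} directly with the chart $P^{1/n}$ --- a fact the paper uses implicitly when it writes $H_{\mr{kfl}}^1(X_n,G)\cong\varinjlim_r\mr{Hom}_{X_n}(\Z/r\Z(1),G)\otimes_{\Z}(P^{1/n})^{\mr{gp}}$ without decomposing $X_n$.
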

\begin{proof}
We only prove $\varinjlim_nH_{\mr{kfl}}^1(X_{n},G)=0$, the other statement can be proven in the same way. By Theorem \ref{thm1.4}, we have 
$$H_{\mr{kfl}}^1(X_n,G)\cong \varinjlim\limits_r\mr{Hom}_{X_{n}}(\Z/r\Z(1),G)\otimes_{\Z}(P^{\frac{1}{n}})^{\mr{gp}}$$
for each $n>0$. As $n$ varies, these isomorphisms fit into commutative diagrams of the form
$$\xymatrix{
H_{\mr{kfl}}^1(X_n,G)\ar[r]\ar[d]_{\cong} &H_{\mr{kfl}}^1(X_{mn},G)\ar[d]^{\cong}  \\
\varinjlim\limits_{r}\mr{Hom}_{X_{n}}(\Z/r\Z(1),G)\otimes_{\Z}(P^{\frac{1}{n}})^{\mr{gp}} \ar[r] &\varinjlim\limits_{r}\mr{Hom}_{X_{mn}}(\Z/r\Z(1),G)\otimes_{\Z}(P^{\frac{1}{mn}})^{\mr{gp}} 
}$$
with the second row induced by the canonical inclusion $P^{1/n}\hookrightarrow P^{1/mn}$. The group $\mr{Hom}_{X_{n}}(\Z/r\Z(1),G)$ is clearly torsion, it follows that 
$$\varinjlim_{n}H_{\mr{kfl}}^1(X_n,G)=\varinjlim\limits_{n}\varinjlim_{r}\mr{Hom}_{X_{n}}(\Z/r\Z(1),G)\otimes_{\Z}(P^{\frac{1}{n}})^{\mr{gp}} =0.$$
\end{proof}

\subsection{Kummer flat case}\label{subsec1.2}
Throughout this subsection, $X$ is an fs log scheme with its underlying scheme locally noetherian, and $G$ is a smooth commutative group scheme over the underlying scheme of $X$. We are going to investigate the second higher direct image $R^2\varepsilon_{\mr{fl}*} G$ along the forgetful map $\varepsilon_{\mr{fl}}:(\mr{fs}/X)_{\mr{kfl}}\rightarrow (\mr{fs}/X)_{\mr{fl}}$. We have a satisfactory result in the important case that $G$ is a torus (as well as in a slightly more general case). The reason why can only deal with $R^i\varepsilon_{\mr{fl}*} G$ for $i=2$ and $G$ a suitable group scheme (mainly tori), is because we are only able to do computations of higher ($i>1$) group scheme cohomology in this case. 
 
The following proposition is the counterpart of Corollary \ref{cor1.2} in the Kummer flat topology, but only for $i=2$ and $F=G$ for a smooth commutative group scheme $G$.

\begin{prop}\label{prop1.4}
Let $X,x,A,p,P,X_m$, and $f_m$ be as in Corollary \ref{cor1.2}. Let $G$ be a smooth commutative group scheme over the underlying scheme of $X$.

Let $\gamma\in H_{\mr{kfl}}^2(X,G)$, then there exists a positive integer $n$ such that $\gamma$ maps to zero in $H_{\mr{kfl}}^2(X_n,G)$ along $f_n:X_n\rightarrow X$.
\end{prop}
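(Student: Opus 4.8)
\emph{Proof proposal.} The plan is to imitate the proof of Corollary \ref{cor1.2}, while coping with the fact that in the Kummer flat setting the refinement step produces a classically flat, rather than classically \'etale, morphism. First I would invoke Proposition \ref{prop1.1} to find a Kummer flat cover $\{Y_i\xrightarrow{g_i}X\}_{i\in I}$ with $g_i^*\gamma=0$ for all $i$, and pick $i_0$ with $g_{i_0}(Y_{i_0})$ containing the closed point $x$. Applying a Kummer flat refinement result analogous to \cite[Prop. 2.15]{niz1}, I would refine this cover through the tower, obtaining a commutative square with a map $h\colon Z\to X_n$ that is classically flat (fppf) and a Kummer flat map $Z\to Y_{i_0}$ whose composite with $g_{i_0}$ meets $x$; here $n\geq 1$ is arbitrary (no invertibility is needed, in contrast to the \'etale case). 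Then $h^*f_n^*\gamma=0$, so $\delta:=f_n^*\gamma\in H_{\mr{kfl}}^2(X_n,G)$ is annihilated by the fppf cover $h$. The essential difficulty already appears: unlike a classically \'etale cover, $h$ need not admit a section over the strictly Henselian local $X_n$, so one cannot conclude $\delta=0$ directly as in Corollary \ref{cor1.2}.

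To get around this I would exploit that we are in degree $2$. The clean geometric input, coming from Lemma \ref{lem1.2}, is that the tower cover $X_m\to X$ base-changes along itself to a cover $X_m\times_XX_m\to X_m$ which \emph{splits} via the diagonal section; hence the \v{C}ech complex of any abelian presheaf for this cover is acyclic in positive degrees, so every class in $\check{H}_{\mr{kfl}}^{p}(X_m/X,\mathcal{F})$ with $p>0$ dies upon restriction along $f_m$. This is the flat substitute for the section argument of Corollary \ref{cor1.2}, but it only disposes of the \emph{\v{C}ech} contributions, not of classes already defined on the $X_m$. Accordingly I would split $\delta$ using the low-degree exact sequence of Corollary \ref{cor1.1} (with $?=\mr{fl}$, $F=G$, $U=X_n$),
$$0\rightarrow \check{H}_{\mr{kfl}}^2(X_n,G)\rightarrow H_{\mr{kfl}}^2(X_n,G)\xrightarrow{\ \phi\ } \check{H}_{\mr{kfl}}^1(X_n,\underline{H}_{\mr{kfl}}^1(G)),$$
functorially in the tower.

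The image $\phi(\delta)$ should be killed after climbing the tower. Here I would feed the tower spectral sequence of Proposition \ref{prop1.2}(1) over $X_n$, whose relevant term $E_2^{1,1}$ is $\varinjlim_m H^1(H_m,H_{\mr{kfl}}^1(X_{nm},G))$ with $H_m$ diagonalizable, together with Proposition \ref{prop1.3} (which gives $\varinjlim_mH_{\mr{kfl}}^1(X_{nm},G)=0$): this controls the $\underline{H}^1$-coefficients and should force $f_m^*\phi(\delta)=0$ for some $m$, so that $\delta|_{X_m}$ lands in the \v{C}ech group $\check{H}_{\mr{kfl}}^2(X_m,G)$. Finally, representing $\delta|_{X_m}$ through a tower cover (again via the refinement of Kummer flat covers) and applying the diagonal-splitting acyclicity above, I would conclude that $\gamma$ dies in $H_{\mr{kfl}}^2(X_{mm'},G)$ for a further $m'$.

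The hard part is precisely the conversion of ``annihilated by an fppf cover of $X_n$'' into ``dies higher in the tower'': this is where the absence of a flat analogue of Corollary \ref{cor1.2} bites, and where the restriction to degree $2$ and to smooth $G$ is genuinely used. Only in this range is the obstruction confined to the single term $\check{H}_{\mr{kfl}}^1(X_n,\underline{H}_{\mr{kfl}}^1(G))$, which is governed by the $H^1$-vanishing of Proposition \ref{prop1.3} and by the diagonalizable-group-scheme cohomology computations employed in the proof of Lemma \ref{lem1.2}; in higher degrees the analogous obstruction would involve terms $\check{H}_{\mr{kfl}}^{\geq 1}(X_n,\underline{H}_{\mr{kfl}}^{\geq 2}(G))$ that we have no means to control.
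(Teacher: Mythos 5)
Your outline correctly reproduces the paper's first move (Proposition \ref{prop1.1} plus the refinement result of \cite[Cor.~2.16]{niz1} to get $\delta:=f_n^*\gamma$ annihilated by a \emph{classical} fppf cover $h\colon Z\to X_n$), but it then goes off the paper's route and breaks down exactly at the point you yourself flag as ``the hard part''. The step ``this \ldots should force $f_m^*\phi(\delta)=0$ for some $m$'' is not an argument: the vanishing you invoke (the computations behind Lemma \ref{lem1.5} and Proposition \ref{prop1.3}) concerns the \emph{tower} \v{C}ech groups $\varinjlim_m\check{H}^1_{\mr{kfl}}(X_{nm}/X_n,\underline{H}^1_{\mr{kfl}}(G))$, i.e.\ classes represented on covers of the form $X_{nm}\to X_n$, whereas $\phi(\delta)$ lives in the full \v{C}ech group $\check{H}^1_{\mr{kfl}}(X_n,\underline{H}^1_{\mr{kfl}}(G))$, a colimit over \emph{all} Kummer flat covers. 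The refinement of \cite[Cor.~2.16]{niz1} only turns an arbitrary cover into (classical fppf) $\circ$ (tower), and nothing in your scheme disposes of the classical fppf tail: over the strictly Henselian local $X_{nm}$ such a cover has no section, so neither your diagonal-splitting homotopy nor the diagonalizable-group-cohomology computations apply to it. The same defect undermines your final step: ``representing $\delta|_{X_m}$ through a tower cover'' is essentially the content of Corollary \ref{cor1.8} and Theorem \ref{thm1.7}, which the paper \emph{deduces from} Proposition \ref{prop1.4}; as written, your plan is circular.

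The missing idea is that ``annihilated by a classical fppf cover of $X_n$'' should be read through the Leray spectral sequence for $\varepsilon_{\mr{fl}}$ over $X_n$, not through \v{C}ech-to-derived sequences: it says precisely that $\delta$ dies in $H^0_{\mr{fl}}(X_n,R^2\varepsilon_{\mr{fl}*}G)$, and the $7$-term exact sequence of $H^i_{\mr{fl}}(X_n,R^j\varepsilon_{\mr{fl}*}G)\Rightarrow H^{i+j}_{\mr{kfl}}(X_n,G)$ then traps $\delta$ between $H^2_{\mr{fl}}(X_n,G)$ and $H^1_{\mr{fl}}(X_n,R^1\varepsilon_{\mr{fl}*}G)$. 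Both vanish: the first because $G$ is smooth, so flat and \'etale cohomology agree and $X_n$ is strictly Henselian local; the second because Theorem \ref{thm1.5} gives $R^1\varepsilon_{\mr{fl}*}G=\varinjlim_m\mc{H}om_{X_n}(\Z/m\Z(1),G)\otimes_{\Z}(\Gml/\Gm)_{X_{n,\mr{fl}}}$, whose Hom-sheaves are representable by quasi-finite \'etale separated group schemes (Lemma \ref{lemA.1}), so that this $H^1_{\mr{fl}}$ is again an \'etale $H^1$ over a strictly Henselian local scheme, hence zero. Thus $\delta=0$ already on $X_n$ --- a stronger conclusion than your target, with no further climbing of the tower --- and this is where smoothness and the restriction to degree $2$ genuinely enter (only $R^1\varepsilon_{\mr{fl}*}G$ is understood, so only the degree-$2$ kernel can be controlled this way).
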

\begin{proof}
By Proposition \ref{prop1.1}, we can find a Kummer flat cover $T\rightarrow X$ such that $\gamma$ dies in $H_{\mr{kfl}}^2(T,G)$. By \cite[Cor. 2.16]{niz1}, we may assume that for some $n$, we have a factorization $T\rightarrow X_n\rightarrow X$ with $T\rightarrow X_n$ a classical flat cover. It follows that the class $\gamma$ on $X_n$ is trivialized by a classical flat cover, i.e. $\gamma\in\mr{ker}(H_{\mr{kfl}}^2(X_n,G)\rightarrow H_{\mr{fl}}^0(X_n,R^2\varepsilon_{\mr{fl}*} G))$. The 7-term exact sequence of the spectral sequence $H_{\mr{fl}}^i(X_n,R^j\varepsilon_{\mr{fl}*}G)\Rightarrow H_{\mr{kfl}}^{i+j}(X_n,G)$ gives an exact sequence
$$\cdots\rightarrow H_{\mr{fl}}^2(X_n,G)\rightarrow \mr{ker}(H_{\mr{kfl}}^2(X_n,G)\rightarrow H_{\mr{fl}}^0(X_n,R^2\varepsilon_{\mr{fl}*} G))\rightarrow H_{\mr{fl}}^1(X_n,R^1\varepsilon_{\mr{fl}*} G).$$
Hence to show that $\gamma=0$ in $H_{\mr{kfl}}^2(X_n,G)$, it suffices to show that $H_{\mr{fl}}^2(X_n,G)=H_{\mr{fl}}^1(X_n,R^1\varepsilon_{\mr{fl}*} G)=0$.

Since $G$ is smooth and $A$ is strictly Henselian, we have 
$$H_{\mr{fl}}^2(X_n,G)=H_{\mr{\acute{e}t}}^2(X_n,G)=0.$$

We have $R^1\varepsilon_{\mr{fl}*}G=\varinjlim_m\mc{H}om_{X_n}(\Z/m\Z(1),G)\otimes (\Gml/\Gm)_{X_{n,\mr{fl}}}$ by Theorem \ref{thm1.5}. By Lemma \ref{lemA.1}, the sheaf $\mc{H}om_{X_n}(\Z/m\Z(1),G)$ is representable by a quasi-finite \'etale separated group scheme over $X_n$. It follows that 
\begin{align*}
H_{\mr{fl}}^1(X_n,R^1\varepsilon_{\mr{fl}*} G)=&H_{\mr{fl}}^1(X_n,\varinjlim_m\mc{H}om_{X_n}(\Z/m\Z(1),G)\otimes (\Gml/\Gm)_{X_{n,\mr{fl}}})  \\
=&\varinjlim_m H_{\mr{fl}}^1(X_n,\mc{H}om_{X_n}(\Z/m\Z(1),G)\otimes (\Gml/\Gm)_{X_{n,\mr{fl}}})  \\
=&\varinjlim_m H_{\mr{fl}}^1(X_n,\theta^*(\mc{H}om_{X_n}(\Z/m\Z(1),G)\otimes_{\Z}(\Gml/\Gm)_{X_{n,\mr{\acute{e}t}}}))  \\
=&\varinjlim_m H_{\mr{\acute{e}t}}^1(X_n,\mc{H}om_{X_n}(\Z/m\Z(1),G)\otimes_{\Z}(\Gml/\Gm)_{X_{n,\mr{\acute{e}t}}}) \\
=&0,
\end{align*}
where $\theta:(\mr{fs}/X_n)_{\mr{fl}}\rightarrow (\mr{fs}/X_n)_{\mr{\acute{e}t}}$ denotes the forgetful map between these two sites. This finishes the proof.
\end{proof}

The following corollary is the counterpart of Corollary \ref{cor1.3} and Proposition \ref{prop1.2} (2) for the Kummer flat topology.

\begin{cor}\label{cor1.8}
Let the notation and the assumptions be as in Proposition \ref{prop1.4}. Then we have
\begin{enumerate}[(1)]
\item $H_{\mr{kfl}}^2(X,G)\cong\varinjlim_n\mr{ker}(H_{\mr{kfl}}^2(X,G)\rightarrow H_{\mr{kfl}}^2(X_n,G))$;
\item $\check{H}_{\mr{kfl}}^0(\mathscr{X}_{\N},\underline{H}_{\mr{kfl}}^2(G))=0$.
\end{enumerate}
\end{cor}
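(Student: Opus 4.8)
The plan is to deduce both parts from Proposition~\ref{prop1.4}, in exactly the way Corollary~\ref{cor1.3} and Proposition~\ref{prop1.2}~(2) were deduced from Corollary~\ref{cor1.2}. For part (1), I would first note that whenever $n\mid N$ the cover $f_N:X_N\to X$ factors as $X_N\to X_n\xrightarrow{f_n}X$ through the canonical map induced by $P^{1/n}\hookrightarrow P^{1/N}$. Hence $f_n^*\gamma=0$ forces $f_N^*\gamma=0$, so the subgroups $\mr{ker}(f_n^*\colon H_{\mr{kfl}}^2(X,G)\to H_{\mr{kfl}}^2(X_n,G))$ form an increasing (directed) family of subgroups of $H_{\mr{kfl}}^2(X,G)$, whose colimit is simply their union. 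Proposition~\ref{prop1.4} says precisely that every $\gamma\in H_{\mr{kfl}}^2(X,G)$ lies in $\mr{ker}(f_n^*)$ for some $n$, i.e.\ that this union is all of $H_{\mr{kfl}}^2(X,G)$, giving the isomorphism of part (1).

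For part (2), the starting point is the tautological inclusion
$$\check{H}_{\mr{kfl}}^0(\mathscr{X}_{\N},\underline{H}_{\mr{kfl}}^2(G))\hookrightarrow\varinjlim_n H_{\mr{kfl}}^2(X_n,G),$$
coming from the fact that $\check{H}^0$ of a cover with values in a presheaf is a subgroup of that presheaf's value on the cover (exactly as in the proof of Proposition~\ref{prop1.2}~(2)). So it suffices to show that the colimit on the right vanishes. A class in this colimit is represented by some $\gamma\in H_{\mr{kfl}}^2(X_N,G)$, and I would kill it by applying Proposition~\ref{prop1.4} with $X$ replaced by $X_N$: since $(X_N)_M=X_{NM}$ and the cover $X_{NM}\to X$ factors through $X_N$, such an application produces $M$ with $\gamma$ dying in $H_{\mr{kfl}}^2(X_{NM},G)$, whence $\gamma=0$ in the colimit.

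The main point to check is therefore that Proposition~\ref{prop1.4} is applicable to $X_N$. This requires that the underlying scheme of $X_N=X\times_{\Spec\Z[P]}\Spec\Z[P^{1/N}]$ again be a finite disjoint union of spectra of strictly Henselian local rings, each carrying a chart $P^{1/N}\cong P$ that induces an isomorphism onto $M/\mc{O}^{\times}$ at its closed point. Since each monomial of $\Z[P^{1/N}]$ is integral over the image $\Z[NP]$ and $P$ is finitely generated, $\Z[P^{1/N}]$ is finite over $\Z[P]$, so $A\otimes_{\Z[P]}\Z[P^{1/N}]$ is finite over the strictly Henselian local ring $A$, hence a finite product of strictly Henselian local rings. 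One then applies Proposition~\ref{prop1.4} componentwise, choosing a single $M$ that works for all of the finitely many components. This verification of the hypotheses on $X_N$ --- rather than any cohomological computation --- is the only delicate step.
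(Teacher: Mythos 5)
Your argument is correct and is exactly the paper's (unwritten) deduction: the paper proves this corollary by the same two steps --- part (1) as the union of the increasing kernels via Proposition~\ref{prop1.4}, and part (2) via the tautological injection into $\varinjlim_n H_{\mr{kfl}}^2(X_n,G)$ killed by applying Proposition~\ref{prop1.4} to $X_N$ itself, mirroring how Corollary~\ref{cor1.3} and Proposition~\ref{prop1.2}~(2) follow from Corollary~\ref{cor1.2}. Your only hedge is unnecessary: since every nonunit of $P^{1/N}$ has $N$-th power in $P^+\subseteq\mathfrak{m}_A$, the closed fibre of $X_N\to X$ is a local Artinian ring, so the finite $A$-algebra underlying $X_N$ is itself strictly Henselian \emph{local} with neat chart $P^{1/N}\cong P$ (a fact the paper already uses in the proof of Corollary~\ref{cor1.2}), and no componentwise application is needed.
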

\begin{proof}
This follows from Proposition \ref{prop1.4}.
\end{proof}

Let the notation and the assumptions be as in Proposition \ref{prop1.4}. Let $n$ be a positive integer. With the help of Corollary \ref{cor1.8} (2), the Kummer flat \v{C}ech cohomology to derive functor cohomology spectral sequence 
from (\ref{eq1.1}) gives rise to an exact sequence
\begin{equation}\label{eq1.3}
\begin{split}
0\rightarrow &\varinjlim_n\check{H}_{\mr{kfl}}^1(X_n/X,G)\rightarrow H_{\mr{kfl}}^1(X,G)\rightarrow \varinjlim_n\check{H}_{\mr{kfl}}^0(X_n/X,\underline{H}_{\mr{kfl}}^1(G))  \\
\rightarrow &\varinjlim_n\check{H}_{\mr{kfl}}^2(X_n/X,G)\rightarrow H_{\mr{kfl}}^2(X,G) \rightarrow \varinjlim_n\check{H}_{\mr{kfl}}^1(X_n/X,\underline{H}_{\mr{kfl}}^1(G))\\ 
\rightarrow &\varinjlim_n\check{H}_{\mr{kfl}}^3(X_n/X,G).
\end{split}
\end{equation}


\begin{lem}\label{lem1.5}
Let the notation and the assumptions be as in Proposition \ref{prop1.4}, then we have 
$$\varinjlim_{n}\check{H}_{\mr{kfl}}^i(X_n/X,\underline{H}_{\mr{kfl}}^1(G))=0$$
for $i\geq 0$.
\end{lem}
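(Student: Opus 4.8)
The plan is to show that the entire filtered colimit of \v{C}ech complexes $\varinjlim_n \check{C}^{\cdot}(X_n/X,\underline{H}_{\mr{kfl}}^1(G))$ vanishes degreewise; since filtered colimits are exact, this forces all of its cohomology, namely $\varinjlim_n\check{H}_{\mr{kfl}}^i(X_n/X,\underline{H}_{\mr{kfl}}^1(G))$, to be zero in every degree $i\geq 0$. By Lemma \ref{lem1.2} the $(r+1)$-fold fibre product $X_n\times_X\cdots\times_X X_n$ is $X_n\times_{\Spec\Z}H_n^r$, so the $r$-th term of the colimit complex is $\varinjlim_n H_{\mr{kfl}}^1(X_n\times_{\Spec\Z}H_n^r,G)$. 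I am thus reduced to proving that $\varinjlim_n H_{\mr{kfl}}^1(X_n\times_{\Spec\Z}H_n^r,G)=0$ for each fixed $r\geq 0$; the case $r=0$ is precisely Proposition \ref{prop1.3}.

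For $r\geq 1$ I would first analyse the geometry of $Y_{n,r}:=X_n\times_{\Spec\Z}H_n^r$. Since $H_n=\Spec\Z[(P^{1/n})^{\mr{gp}}/P^{\mr{gp}}]$ is finite over $\Spec\Z$, the projection $Y_{n,r}\to X_n$ is finite and strict, because the factor $H_n^r$ carries the trivial log structure. As the underlying scheme of $X$ is strictly Henselian local and $Y_{n,r}$ is finite over it, $Y_{n,r}$ decomposes into finitely many connected components $Z=\Spec B$ with $B$ Henselian local; moreover each residue field is a finite, hence purely inseparable, extension of the separably closed residue field of $A$, so it is again separably closed and $B$ is strictly Henselian. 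Using the strictness of $Y_{n,r}\to X_n$ together with the chart of $X_n$, each $Z$ inherits the sharp chart $P^{1/n}\xrightarrow{\cong}M_{Z,z}/\mc{O}_{Z,z}^\times$ at its closed point $z$. Hence Theorem \ref{thm1.4} applies to each $Z$ and yields
$$H_{\mr{kfl}}^1(Z,G)\cong\varinjlim_m\mr{Hom}_Z(\Z/m\Z(1),G)\otimes_{\Z}(P^{1/n})^{\mr{gp}},$$
and summing over the components computes $H_{\mr{kfl}}^1(Y_{n,r},G)$.

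Finally I would run the torsion-killing argument of Proposition \ref{prop1.3}. The transition map from level $n$ to level $nm$ in the \v{C}ech system is induced by $Y_{nm,r}\to Y_{n,r}$ and acts on the factor $(P^{1/n})^{\mr{gp}}$ through the inclusion $(P^{1/n})^{\mr{gp}}\hookrightarrow(P^{1/nm})^{\mr{gp}}$, under which an element $\xi$ of $(P^{1/n})^{\mr{gp}}$ becomes $m\xi'$ for some $\xi'\in(P^{1/nm})^{\mr{gp}}$. Any element of $H_{\mr{kfl}}^1(Y_{n,r},G)$ is a finite sum of terms $\phi\otimes\xi$ with $\phi$ torsion; choosing $m$ to be a common annihilator of the finitely many $\phi$ that occur and pulling back to level $nm$ turns each term into $m\phi\otimes\xi'=0$. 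Therefore $\varinjlim_n H_{\mr{kfl}}^1(Y_{n,r},G)=0$, which completes the reduction and proves the lemma.

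The main obstacle is the verification in the second step that Theorem \ref{thm1.4} genuinely applies to the connected components of $Y_{n,r}$: one must confirm that these components are strictly Henselian local (the purely inseparable residue field extension argument) and, more delicately, that the pulled-back log structure still provides a sharp chart $P^{1/n}\cong M_{Z,z}/\mc{O}_{Z,z}^\times$ at the closed point, so that the hypotheses of Theorem \ref{thm1.4} are met exactly. Once this local-structure check is in place, the identification of the colimit complex and the torsion computation are routine consequences of Lemma \ref{lem1.2}, Theorem \ref{thm1.4}, and Proposition \ref{prop1.3}.
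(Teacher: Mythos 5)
Your proposal is correct, but it takes a genuinely different route in the endgame than the paper. The decomposition you use is the same as the paper's: both exploit $X_n\times_X\cdots\times_XX_n\cong X_n\times_{\Spec\Z}H_n^r$ and the splitting into a constant prime-to-$p$ part $(H_{n'})_X^r$ and a connected $p$-part, so your connected components are exactly the copies of $(H_{p^t})_X^r\times_XX_n$ indexed by $H_{n'}(X)^r$, and your local-structure check (components strictly Henselian local, closed fibre a local ring with residue field $k$, sharp chart $P^{1/n}$ surviving pullback along the strict finite projection) is genuinely needed and correctly sketched --- it is the same verification the paper performs implicitly before applying Theorem \ref{thm1.4} to $(H_{p^r})_X^i\times_XX_{p^r}$. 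Where you diverge is in what you do with the components. The paper computes each finite-level \v{C}ech group exactly: it proves, via Henselian pairs and Gabber's theorem, the rigidity statement $\mr{Hom}_{X_{p^r}}(\Z/m\Z(1),G)\cong\mr{Hom}_{(H_{p^r})_X^i\times_XX_{p^r}}(\Z/m\Z(1),G)$, so that the $p$-part of the \v{C}ech complex has differentials alternating $0,\mr{Id}$; this identifies $\check{H}_{\mr{kfl}}^i(X_n/X,\underline{H}_{\mr{kfl}}^1(G))$ with the group cohomology $H^i(H_{n'}(X),H_{\mr{kfl}}^1(X_n,G))$, after which the colimit is interchanged with profinite group cohomology by Serre's proposition and Proposition \ref{prop1.3} concludes. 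You skip all of this: since you only need the colimit over $n$ to vanish, you apply Theorem \ref{thm1.4} componentwise (torsionness of the Hom-groups is automatic, so no rigidity result is required to compare components with $X_n$), and the divisibility $\xi=m\xi'$ in $(P^{1/nm})^{\mr{gp}}$ kills the colimit complex degreewise --- the finiteness of the number of components and of each sum of simple tensors makes the choice of a common annihilator legitimate. Your route is thus more elementary, avoiding Gabber's theorem, the Henselian-pair bookkeeping, group cohomology, and Serre's interchange. The trade-off is that you lose the finite-level computation: the paper's argument also yields \eqref{eq1.4}, the vanishing $\check{H}_{\mr{kfl}}^i(X_{p^r}/X,\underline{H}_{\mr{kfl}}^1(G))=0$ for $i>0$, which is reused in the proof of Lemma \ref{lem1.6} (3), so your proof establishes Lemma \ref{lem1.5} itself but could not replace the paper's proof wholesale. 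One point to make explicit if you write this up: the isomorphism of Theorem \ref{thm1.4} must be natural under the pullbacks matching $(P^{1/n})^{\mr{gp}}\hookrightarrow(P^{1/nm})^{\mr{gp}}$; this is exactly the compatibility the paper invokes in the commutative diagram in the proof of Proposition \ref{prop1.3}, so you are at par with the paper's level of rigor there.
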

\begin{proof}
We denote by $X_{n,i}$ the fibre product of $i+1$ copies of $X_n$ over $X$. We have $X_{n,i}=H_n^i\times_{\Spec\Z}X_n=(H_n)_{X}^i\times_XX_n$, where $H_n:=\Spec\Z[(P^{1/n})^{\mr{gp}}/P^{\mr{gp}}]$ and $(H_n)_X:=H_n\times_{\Spec\Z}X$. Note that $(H_n)_X$ is a constant group scheme associated to the abstract group $H_n(X)$ over $X$ if $(n,p)=1$.

We first compute the \v{C}ech cohomology group $\check{H}_{\mr{kfl}}^i(X_n/X,\underline{H}_{\mr{kfl}}^1(G))$ for $n=p^r$. In this case, the underlying scheme of the group scheme $(H_{p^r})_{X}$
is strictly Henselian local. Let $A_{p^r}$ (resp. $x_{p^r}$) be the underlying ring (resp. the closed point) of $X_{p^r}$, and let $\mathfrak{m}_{A_{p^r}}$ be the maximal ideal of $A_{p^r}$. Since $A_{p^r}$ is a finite local $A$-algebra, it is also Henselian local by part (2) of \cite[\href{https://stacks.math.columbia.edu/tag/04GH}{Tag 04GH}]{stacks-project}, and thus $(A_{p^r},\mathfrak{m}_{A_{p^r}})$ is a Henselian pair (see \cite[\href{https://stacks.math.columbia.edu/tag/09XE}{Tag 09XE}]{stacks-project}). Let $B$ be the underlying ring of $(H_{p^r})_{X}^i\times_XX_{p^r}$. Clearly $B$ is a finite local $A_{p^r}$-algebra. Therefore any finite $B$-algebra $C$ is also a finite $A_{p^r}$-algebra, and we have $(\mathfrak{m}_{A_{p^r}}B)C=\mathfrak{m}_{A_{p^r}}C$. Then by an easy exercise, one can see that the equivalence $(1)\Leftrightarrow(3)$ of \cite[\href{https://stacks.math.columbia.edu/tag/09XI}{Tag 09XI}]{stacks-project} implies that $(B,\mathfrak{m}_{A_{p^r}}B)$ is also a Henselian pair. Since the Hom-sheaf $\mc{H}om_X(\Z/m\Z(1),G)$ is torsion, Gabber's Theorem (see \cite[\href{https://stacks.math.columbia.edu/tag/09ZI}{Tag 09ZI}]{stacks-project}) implies that the vertical maps in the canonical commutative diagram
$$\xymatrix{
\mr{Hom}_{X_{p^r}}(\Z/m\Z(1),G)\ar[r]\ar[d]^\cong &\mr{Hom}_{(H_{p^r})_{X}^i\times_XX_{p^r}}(\Z/m\Z(1),G)\ar[d]^\cong  \\ 
\mr{Hom}_{x_{p^r}}(\Z/m\Z(1),G)\ar[r]^-\cong &\mr{Hom}_{(H_{p^r})_{X}^i\times_Xx_{p^r}}(\Z/m\Z(1),G)
}$$
are isomorphisms. Since $(H_{p^r})_{X}^i\times_Xx_{p^r}$ is an infinitesimal thickening of $x_{p^r}$, the lower horizontal map in the above diagram is also an isomorphism by \cite[\href{https://stacks.math.columbia.edu/tag/03SI}{Tag 03SI}]{stacks-project}. Thus the canonical map 
$$\mr{Hom}_{X_{p^r}}(\Z/m\Z(1),G)\xrightarrow{\cong}\mr{Hom}_{(H_{p^r})_{X}^i\times_XX_{p^r}}(\Z/m\Z(1),G)$$
is an isomorphism. Combining with Theorem \ref{thm1.4}, we get 
\begin{align*}
H_{\mr{kfl}}^1(X_{p^r,i},G)=&H_{\mr{kfl}}^1((H_{p^r})_{X}^i\times_XX_{p^r},G) \\
=&\varinjlim_m\mr{Hom}_{(H_{p^r})_{X}^i\times_XX_{p^r}}(\Z/m\Z(1),G)\otimes_{\Z}(P^{\frac{1}{p^r}})^{\mr{gp}}  \\
=&\varinjlim_m\mr{Hom}_{X_{p^r}}(\Z/m\Z(1),G)\otimes_{\Z}(P^{\frac{1}{p^r}})^{\mr{gp}}  \\
=&H_{\mr{kfl}}^1(X_{p^r},G).
\end{align*}
The \v{C}ech complex for $\underline{H}_{\mr{kfl}}^1(G)$ with respect to the cover $X_{p^r}/X$ can be identified with 
$$H_{\mr{kfl}}^1(X_{p^r},G)\xrightarrow{0}H_{\mr{kfl}}^1(X_{p^r},G)\xrightarrow{\mr{Id}}H_{\mr{kfl}}^1(X_{p^r},G)\xrightarrow{0} H_{\mr{kfl}}^1(X_{p^r},G)\xrightarrow{\mr{Id}}\cdots ,$$
hence 
\begin{equation}\label{eq1.4}
\check{H}_{\mr{kfl}}^i(X_{p^r}/X,\underline{H}_{\mr{kfl}}^1(G))=\begin{cases}H_{\mr{kfl}}^1(X_{p^r},G),&\text{ if $i=0$}  \\ 0, &\text{ if $i>0$}\end{cases}.
\end{equation}

In general, we write $n=p^r\cdot n'$ with $(p,n')=1$, then we have 
$$X_{n,i}=(H_{n'})_{X}^i\times_X((H_{p^r})_{X}^i\times_XX_n)$$
and $$H_{\mr{kfl}}^1(X_{n,i},G)=\prod_{x\in H_{n'}(X)^i}H_{\mr{kfl}}^1(X_{n},G)=\mr{Map}(H_{n'}(X)^i,H_{\mr{kfl}}^1(X_{n},G)).$$
The \v{C}ech complex for $\underline{H}_{\mr{kfl}}^1(G)$ with respect to the cover $X_{n}/X$ can be identified with the standard complex that computes the cohomology of $H_{\mr{kfl}}^1(X_{n},G)$ regarded as a trivial $H_{n'}(X)$-module, hence we get $$\check{H}_{\mr{kfl}}^i(X_{n}/X,\underline{H}_{\mr{kfl}}^1(G))=H^i(H_{n'}(X),H_{\mr{kfl}}^1(X_{n},G)).$$

At last, we get
\begin{align*}
\varinjlim_{n}\check{H}_{\mr{kfl}}^i(X_n/X,\underline{H}_{\mr{kfl}}^1(G))=&\varinjlim_{n=p^r\cdot n'} H^i(H_{n'}(X),H_{\mr{kfl}}^1(X_{n},G))  \\
=&H^i(\varprojlim_{n'}H_{n'}(X),\varinjlim_{n=p^r\cdot n'}H_{\mr{kfl}}^1(X_{n},G)),
\end{align*}
where the second identification follows from \cite[\S 2, Prop. 8]{ser1}. By Proposition \ref{prop1.3} we have $\varinjlim_{n}H_{\mr{kfl}}^1(X_{n},G)=0$, and thus $\varinjlim_{n}\check{H}_{\mr{kfl}}^i(X_n/X,\underline{H}_{\mr{kfl}}^1(G))=0$. 
\end{proof}

\begin{thm}\label{thm1.7}
Let the notation and the assumptions be as in Proposition \ref{prop1.4}. Then the canonical homomorphism $\varinjlim_{n}\check{H}_{\mr{kfl}}^2(X_n/X,G)\rightarrow H_{\mr{kfl}}^2(X,G)$ is an isomorphism.
\end{thm}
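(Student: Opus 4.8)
The plan is to read off the conclusion directly from the exact sequence \eqref{eq1.3}, which is the seven-term exact sequence attached to the \v{C}ech-to-derived-functor spectral sequence \eqref{eq1.1} in the Kummer flat topology, combined with the two vanishing inputs that have just been established. Recall that \eqref{eq1.3} reads
$$\varinjlim_n\check{H}_{\mr{kfl}}^2(X_n/X,G)\rightarrow H_{\mr{kfl}}^2(X,G) \rightarrow \varinjlim_n\check{H}_{\mr{kfl}}^1(X_n/X,\underline{H}_{\mr{kfl}}^1(G)),$$
so the claimed isomorphism will follow once we know that the left arrow is injective and the right-hand term vanishes.

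First I would invoke Lemma \ref{lem1.5} with $i=1$ to obtain $\varinjlim_{n}\check{H}_{\mr{kfl}}^1(X_n/X,\underline{H}_{\mr{kfl}}^1(G))=0$; this immediately kills the target of the arrow out of $H_{\mr{kfl}}^2(X,G)$, showing that the map $\varinjlim_n\check{H}_{\mr{kfl}}^2(X_n/X,G)\rightarrow H_{\mr{kfl}}^2(X,G)$ is surjective. For injectivity I would look one step earlier in \eqref{eq1.3}, at the arrow $\varinjlim_n\check{H}_{\mr{kfl}}^0(X_n/X,\underline{H}_{\mr{kfl}}^1(G))\rightarrow \varinjlim_n\check{H}_{\mr{kfl}}^2(X_n/X,G)$: its image is the kernel of the map into $H_{\mr{kfl}}^2(X,G)$, so it suffices to show this $\check{H}^0$-term vanishes. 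But $\varinjlim_n\check{H}_{\mr{kfl}}^0(X_n/X,\underline{H}_{\mr{kfl}}^1(G))$ is precisely the $i=0$ instance of Lemma \ref{lem1.5}, which again gives zero. Hence both ends are controlled and the middle arrow is an isomorphism.

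The work has, in effect, already been done: the genuine content sits in Lemma \ref{lem1.5} (the vanishing of the limits of the \v{C}ech cohomology of the presheaf $\underline{H}_{\mr{kfl}}^1(G)$), whose proof in turn rests on Proposition \ref{prop1.3} (that $\varinjlim_n H_{\mr{kfl}}^1(X_n,G)=0$) and the Key Lemma. So the only nontrivial thing to verify for the present theorem is that the relevant exactness of \eqref{eq1.3} is genuinely available; I would note that Corollary \ref{cor1.8} (2) established $\check{H}_{\mr{kfl}}^0(\mathscr{X}_{\N},\underline{H}_{\mr{kfl}}^2(G))=0$, which is exactly the edge-vanishing needed to extract the seven-term sequence from the spectral sequence \eqref{eq1.1} in low degrees. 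The main (and essentially only) obstacle is therefore bookkeeping: making sure that the two appearances of Lemma \ref{lem1.5}, for $i=0$ and $i=1$, are correctly matched to the kernel and cokernel of the arrow in question, rather than any new geometric input. Given that, the proof is a one-line citation of \eqref{eq1.3} together with Lemma \ref{lem1.5}.
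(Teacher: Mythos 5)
Your proposal is correct and coincides with the paper's own proof, which likewise deduces the theorem from the exact sequence \eqref{eq1.3} together with Lemma \ref{lem1.5} (the $i=0$ case giving injectivity, the $i=1$ case giving surjectivity). Your observation that Corollary \ref{cor1.8}\,(2) is what guarantees the full group $H_{\mr{kfl}}^2(X,G)$, rather than only $\ker\bigl(H_{\mr{kfl}}^2(X,G)\rightarrow \check{H}_{\mr{kfl}}^0(\mathscr{X}_{\N},\underline{H}_{\mr{kfl}}^2(G))\bigr)$, appears in \eqref{eq1.3} is exactly the role it plays in the paper as well.
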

\begin{proof}
The result follows from Lemma \ref{lem1.5} and the exact sequence (\ref{eq1.3}).
\end{proof}

In order to understand the group $H_{\mr{kfl}}^2(X,G)$, we are reduced to compute the groups $\check{H}_{\mr{kfl}}^2(X_n/X,G)$. 

\begin{lem}\label{lem1.6}
Let the notation and the assumptions be as in Proposition \ref{prop1.4}. We further assume that $G$ is a torus. Then we have
\begin{enumerate}[(1)]
\item $\check{H}_{\mr{kfl}}^2(X_n/X,G)=\check{H}_{\mr{k\acute{e}t}}^2(X_n/X,G)$ for $(n,p)=1$;
\item $\check{H}_{\mr{kfl}}^2(X_{p^r}/X,G)=0$ for $r>0$;
\item $$H_{\mr{kfl}}^2(X,G)=\varinjlim_{(n,p)=1}\check{H}_{\mr{kfl}}^2(X_n/X,G)=\varinjlim_{(n,p)=1}\check{H}_{\mr{k\acute{e}t}}^2(X_n/X,G)=H_{\mr{k\acute{e}t}}^2(X,G),$$
in particular $H_{\mr{kfl}}^2(X,G)$ is torsion, $p$-torsion-free and divisible.
\end{enumerate}
\end{lem}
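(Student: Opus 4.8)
The plan is to compute the \v{C}ech cohomology $\check{H}_{\mr{kfl}}^2(X_n/X,G)$ by identifying the relevant \v{C}ech complex with a group-scheme cohomology complex, exploiting the decomposition $n=p^r\cdot n'$ with $(p,n')=1$ already used in the proof of Lemma \ref{lem1.5}. For a torus $G$, the key structural input is the Galois description $X_n\times_X\cdots\times_XX_n\cong X_n\times_{\Spec\Z}H_n^i$, so that the \v{C}ech complex $C^{\cdot}(X_n/X,G)$ is the standard complex computing the cohomology of the $H_n$-module $\ms{G}:=G(X_n\times_{\Spec\Z}H_n^{\bullet})$, i.e. the group-scheme cohomology $H^{\cdot}_X(H_n,G)$ in the sense of \cite[Chap. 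II, \S 3]{d-g1}. Thus parts (1) and (2) become computations of $H^2_X(H_n,G)$ for a diagonalizable $H_n$.

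For part (2), when $n=p^r$ the group scheme $H_{p^r}$ is infinitesimal (connected, with underlying topological space a point) over the strictly Henselian base, and I would argue that $H^2_X(H_{p^r},G)=0$ by reducing, via Lemma \ref{lem1.3} (the Key Lemma), to the artinian/residue-point case $x_{p^r}/x$, where the cohomology of the torus $G$ against an infinitesimal diagonalizable group vanishes in positive degree; alternatively one invokes the analogue of \cite[Exp. I, Thm. 5.3.3]{sga3-1} for diagonalizable group schemes acting on the relevant module, as was done for $\tilde{N}$ in the proof of Lemma \ref{lem1.2}. For part (1), when $(n,p)=1$ the cover $X_n\to X$ is \emph{Kummer \'etale} as well as Kummer flat, and $(H_n)_X$ is a constant (finite \'etale) group scheme; hence the \v{C}ech complexes for the kfl and k\'et topologies literally coincide, giving $\check{H}_{\mr{kfl}}^2(X_n/X,G)=\check{H}_{\mr{k\acute{e}t}}^2(X_n/X,G)$. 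This is essentially a formality once one checks that the group-scheme cohomology complexes agree, which they do because both are the standard complex of the abstract group $H_n(X)$ acting on $G$ of the fibre products.

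For part (3), I would assemble the pieces: Theorem \ref{thm1.7} gives $H_{\mr{kfl}}^2(X,G)=\varinjlim_n\check{H}_{\mr{kfl}}^2(X_n/X,G)$, and writing $n=p^r\cdot n'$ and passing to the limit, part (2) kills the $p$-power contributions while part (1) identifies the prime-to-$p$ contributions with the Kummer \'etale ones. More precisely, the transition maps in the colimit factor the $p^r$-part through the vanishing groups of (2), so $\varinjlim_n\check{H}_{\mr{kfl}}^2(X_n/X,G)=\varinjlim_{(n,p)=1}\check{H}_{\mr{kfl}}^2(X_n/X,G)$; then (1) rewrites this as $\varinjlim_{(n,p)=1}\check{H}_{\mr{k\acute{e}t}}^2(X_n/X,G)$, and the Kummer \'etale analogue of Theorem \ref{thm1.7} (available through Proposition \ref{prop1.2} and the spectral sequence (\ref{eq1.1}), since Lemma \ref{lem1.5} and its k\'et counterpart give the needed vanishing of $\check{H}_{\mr{k\acute{e}t}}^{\cdot}(\mathscr{X}_{\N'},\underline{H}_{\mr{k\acute{e}t}}^1(G))$) identifies the latter colimit with $H_{\mr{k\acute{e}t}}^2(X,G)$. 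Finally, Corollary \ref{cor1.4} shows $H_{\mr{k\acute{e}t}}^2(X,G)$ is torsion and $p$-torsion-free, and divisibility follows from the Kummer \'etale description in Theorem \ref{thm1.1}(3) together with the fact that every class is prime-to-$p$ torsion, so multiplication by any prime-to-$p$ integer is surjective on the colimit.

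I expect the main obstacle to be part (2): carefully justifying the vanishing $\check{H}_{\mr{kfl}}^2(X_{p^r}/X,G)=0$ in characteristic $p$, where the cover $X_{p^r}\to X$ is genuinely non-\'etale and the group scheme $H_{p^r}$ is infinitesimal. The delicate point is that torus cohomology against an infinitesimal diagonalizable group need not vanish by a naive Hochschild argument, so one must either use the Key Lemma \ref{lem1.3} to transfer the computation to the residue point (where $G$, being a smooth torus, has vanishing higher cohomology against the infinitesimal group by an explicit Hochschild-complex computation for diagonalizable groups), or directly invoke the diagonalizability of $H_{p^r}$ together with coherence of the relevant module as in the proof of Lemma \ref{lem1.2}. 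Getting this vanishing cleanly, and ensuring it is degree $2$ rather than only $i>1$ in general, is where the real work lies.
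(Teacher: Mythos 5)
Parts (1) and (3) of your proposal track the paper's actual proof reasonably well: (1) is indeed immediate because for $(n,p)=1$ the cover $X_n/X$ is Kummer \'etale, so the two \v{C}ech complexes coincide; and your global strategy for (3) (kill $p$-power contributions via (2), identify the prime-to-$p$ colimit with the Kummer \'etale one, then use a k\'et analogue of Theorem \ref{thm1.7}) is the paper's strategy in outline. One execution point in (3) is loose, though: your claim that ``the transition maps in the colimit factor the $p^r$-part through the vanishing groups of (2)'' does not parse as stated, since a transition map $\check{H}_{\mr{kfl}}^2(X_{n'}/X,G)\rightarrow\check{H}_{\mr{kfl}}^2(X_{n'p^r}/X,G)$ does not pass through $\check{H}_{\mr{kfl}}^2(X_{p^r}/X,G)$. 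What the paper actually proves is that every class of $H_{\mr{kfl}}^2(X,G)$ dies on some \emph{prime-to-$p$} cover $X_m$: starting from Corollary \ref{cor1.8}, it applies part (2) \emph{over the base $X_m$} (i.e.\ to the cover $X_{mp^r}/X_m$, which is legitimate since $X_m$ satisfies the same hypotheses) together with the vanishing (\ref{eq1.4}) of $\check{H}_{\mr{kfl}}^1(X_{mp^r}/X_m,\underline{H}_{\mr{kfl}}^1(G))$, and only then passes to the limit of the \v{C}ech-to-derived sequences over prime-to-$p$ $m$. Your sketch is repairable along these lines, but as written it is not an argument.

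The genuine gap is part (2), which you yourself flag as ``where the real work lies'' but never close. After the (correct) reduction via Lemma \ref{lem1.3} to a log point over a separably closed field $k$ and the identification $\check{H}_{\mr{kfl}}^i(X_{p^r}/X,G)\cong H^i_{X_{\mr{fl}}}(H_{p^r},G)$ for $i\geq1$ (\cite[Lem. 3.16]{niz1}), neither of your two proposed mechanisms works. The coherence route via \cite[Exp. I, Thm. 5.3.3]{sga3-1} is unavailable: that theorem kills higher Hochschild cohomology of a diagonalizable group acting on a quasi-coherent \emph{module}, and it applied in Lemma \ref{lem1.2} only because $\tilde{N}\cong\mr{Lie}(G)\otimes I\mc{O}$ is coherent; the torus $G$ itself is not an additive module object. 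And no blanket ``vanishing in positive degrees'' can hold, since already $H^1_{X_{\mr{fl}}}(H_{p^r},\Gm)=\mr{Hom}(H_{p^r},\Gm)\neq0$ by Cartier duality, even over $k$ separably closed. The degree-$2$ vanishing requires a specific extension-theoretic input, which is what the paper supplies: $H^2_{X_{\mr{fl}}}(H_{p^r},G)$ is identified with the group of extension classes of $H_{p^r}$ by $G$ admitting a (not necessarily homomorphic) scheme-theoretic section (\cite[Exp. XVII, Prop. A.3.1]{sga3-2}); such an extension is of multiplicative type by \cite[Exp. XVII, Prop. 7.1.1]{sga3-2}, hence commutative; and over the separably closed field $k$ every such extension of multiplicative-type groups splits (on character groups one is looking at an extension of $\Z$-modules with free quotient). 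Without this argument, or an equivalent explicit symmetric-cocycle computation you do not provide, your proof of (2) --- and hence of (3), which invokes (2) both for the full colimit and over the intermediate bases $X_m$ --- is incomplete.
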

\begin{proof}
(1) Since $X_n$ is a Kummer \'etale cover of $X$ whenever $(p,n)=1$, this is clear.

(2) By Lemma \ref{lem1.3}, we may assume that $X$ is a log point $\Spec k$ with $k$ separably closed. By \cite[page 521-523, in particular Lem. 3.16]{niz1}, we have 
$$\check{H}_{\mr{kfl}}^i(X_{p^r}/X,G)\cong H^i_{X_{\mr{fl}}}(H_{p^r},G)$$
for $i\geq 1$, where $H^i_{X_{\mr{fl}}}(H_{p^r},G)$ denotes the $i$-th cohomology group of the group scheme $H_{p^r}=\Spec\Z[(P^{1/p^r})^{\mr{gp}}/P^{\mr{gp}}]$ acting trivially on $G$ over the flat site $X_{\mr{fl}}$. The group $H^2_{X_{\mr{fl}}}(H_{p^r},G)$ can be identified with the group of extension classes of $H_{p^r}$ by $G$ which admit a (not necessarily homomorphic) section, see \cite[Exp. XVII, Prop. A.3.1]{sga3-2}. By \cite[Exp. XVII, Prop. 7.1.1]{sga3-2}, such extensions must be of multiplicative type, therefore must be commutative. Since the base field $k$ is separably closed, such extensions must be trivial. It follows that $\check{H}_{\mr{kfl}}^2(X_{p^r}/X,G)\cong H^2_{X_{\mr{fl}}}(H_{p^r},G)=0$. 

(3) First we show that any class $\gamma\in H_{\mr{kfl}}^2(X,G)$ vanishes in $H_{\mr{kfl}}^2(X_m,G)$ for some positive integer $m$ with $(m,p)=1$. By Corollary \ref{cor1.8}, $\gamma$ is annihilated by some cover $X_{m\cdot p^r}$ with $(m,p)=1$. Let $\gamma'$ be the image of $\gamma$ in $H_{\mr{kfl}}^2(X_m,G)$, we want to show that $\gamma'$ is zero. The \v{C}ech-to-derived functor spectral sequence for the cover $X_{m\cdot p^r}/X_{m}$ gives rise to an exact sequence
\begin{align*}
\cdots\rightarrow &\check{H}_{\mr{kfl}}^2(X_{m\cdot p^r}/X_m,G)\rightarrow \mr{ker}(H_{\mr{kfl}}^2(X_m,G)\rightarrow H_{\mr{kfl}}^2(X_{m\cdot p^r},G)) \\
\rightarrow &\check{H}_{\mr{kfl}}^1(X_{m\cdot p^r}/X_m,\underline{H}_{\mr{kfl}}^1(G))\rightarrow\cdots .
\end{align*}
We have $\gamma'\in \mr{ker}(H_{\mr{kfl}}^2(X_m,G)\rightarrow H_{\mr{kfl}}^2(X_{m\cdot p^r},G))$. We have $\check{H}_{\mr{kfl}}^2(X_{m\cdot p^r}/X_m,G)=0$ by part (2). By (\ref{eq1.4}), we have $\check{H}_{\mr{kfl}}^1(X_{m\cdot p^r}/X_m,\underline{H}_{\mr{kfl}}^1(G))=0$. Hence we get $\gamma'=0$. It follows that
\begin{equation}\label{eq1.5}
\varinjlim_{(m,p)=1}\mr{ker}(H_{\mr{kfl}}^2(X,G)\rightarrow H_{\mr{kfl}}^2(X_m,G))\xrightarrow{\cong}H_{\mr{kfl}}^2(X,G).
\end{equation}

Now consider the exact sequences 
\begin{align*}
\rightarrow &\check{H}_{\mr{kfl}}^0(X_m/X,\underline{H}_{\mr{kfl}}^1(G))\rightarrow \check{H}_{\mr{kfl}}^2(X_{m}/X,G)\rightarrow \mr{ker}(H_{\mr{kfl}}^2(X,G)\rightarrow H_{\mr{kfl}}^2(X_m,G)) \\
\rightarrow &\check{H}_{\mr{kfl}}^1(X_m/X,\underline{H}_{\mr{kfl}}^1(G))\rightarrow
\end{align*}
arising from the spectral sequence $\check{H}_{\mr{kfl}}^i(X_m/X,\underline{H}_{\mr{kfl}}^j(G))\Rightarrow H_{\mr{kfl}}^{i+j}(X_m,G)$ for the Kummer flat covers $X_m/X$ with $(m,p)=1$. Taking direct limit, we get an exact sequence 
\begin{equation}\label{eq1.6}
\begin{split}
\rightarrow &\varinjlim_{(m,p)=1}\check{H}_{\mr{kfl}}^0(X_m/X,\underline{H}_{\mr{kfl}}^1(G))\xrightarrow{\alpha} \varinjlim_{(m,p)=1}\check{H}_{\mr{kfl}}^2(X_{m}/X,G)\rightarrow H_{\mr{kfl}}^2(X,G) \\  
\rightarrow &\varinjlim_{(m,p)=1}\check{H}_{\mr{kfl}}^1(X_m/X,\underline{H}_{\mr{kfl}}^1(G))
\end{split}
\end{equation}
by the identification (\ref{eq1.5}). Similar to the general case of the proof of Lemma \ref{lem1.5}, we can show that $\varinjlim_{(m,p)=1}\check{H}_{\mr{kfl}}^i(X_m/X,\underline{H}_{\mr{kfl}}^1(G))=0$ for any $i\geq0$. Then the exact sequence (\ref{eq1.6}) tells us that 
$$H_{\mr{kfl}}^2(X,G)=\varinjlim_{(m,p)=1}\check{H}_{\mr{kfl}}^2(X_m/X,G)=\varinjlim_{(m,p)=1}\check{H}_{\mr{k\acute{e}t}}^2(X_m/X,G)=H_{\mr{k\acute{e}t}}^2(X,G).$$
The group $H_{\mr{k\acute{e}t}}^2(X,G)$ is torsion and $p$-torsion-free by Corollary \ref{cor1.4}, and $n$-divisible for $(n,p)=1$ by Theorem \ref{thm1.1} (3). Therefore $H_{\mr{kfl}}^2(X,G)$ is torsion, $p$-torsion-free, and divisible.
\end{proof}

\begin{cor}\label{cor1.9}
Let the notation and the assumptions be as in Lemma \ref{lem1.6}. Then we have 
$H_{\mr{kfl}}^2(X,G[n])\cong G[m](-2)(X)\otimes_{\Z}\bigwedge^2P^{\mr{gp}}$, where $n=m\cdot p^r$ with $(m,p)=1$.
\end{cor}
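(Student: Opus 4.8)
The plan is to express $H_{\mr{kfl}}^2(X,G[n])$ through the $n$-torsion of $H_{\mr{kfl}}^2(X,G)$, and then transport the computation to the Kummer \'etale side where Theorem \ref{thm1.1} (3) applies.

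First I would exploit the Kummer exact sequence. As $G$ is a torus, multiplication by $n$ on $G$ is faithfully flat and set-theoretically surjective, so $0\to G[n]\to G\xrightarrow{n}G\to 0$ is exact on $(\mr{fs}/X)_{\mr{kfl}}$ with $G[n]$ finite flat. The associated long exact cohomology sequence yields
$$0\to H_{\mr{kfl}}^1(X,G)/nH_{\mr{kfl}}^1(X,G)\to H_{\mr{kfl}}^2(X,G[n])\to H_{\mr{kfl}}^2(X,G)[n]\to 0.$$
By Theorem \ref{thm1.4} we have $H_{\mr{kfl}}^1(X,G)\cong\varinjlim_k\mr{Hom}_X(\Z/k\Z(1),G)\otimes_{\Z}P^{\mr{gp}}$; as $k$ ranges over all positive integers the direct limit of the torsion groups $\mr{Hom}_X(\Z/k\Z(1),G)$ is divisible, and tensoring with the free $\Z$-module $P^{\mr{gp}}$ preserves divisibility. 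Hence $H_{\mr{kfl}}^1(X,G)$ is divisible, the left-hand term vanishes, and I obtain $H_{\mr{kfl}}^2(X,G[n])\cong H_{\mr{kfl}}^2(X,G)[n]$.

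Next I would pass to the Kummer \'etale topology. By Lemma \ref{lem1.6} (3) we have $H_{\mr{kfl}}^2(X,G)=H_{\mr{k\acute{e}t}}^2(X,G)$, and this group is torsion and $p$-torsion-free. Writing $n=m\cdot p^r$ with $(m,p)=1$, the $p$-primary part of the $n$-torsion therefore vanishes, so $H_{\mr{kfl}}^2(X,G)[n]=H_{\mr{k\acute{e}t}}^2(X,G)[m]$, and it remains to identify this last group. Since $(m,p)=1$ and $A$ is local with residue characteristic $p$, the integer $m$ is invertible on $X$, and Theorem \ref{thm1.1} (3) gives an isomorphism of \'etale sheaves $(R^2\varepsilon_{\mr{\acute{e}t}*}G)[m]\cong G[m](-2)\otimes_{\Z}\bigwedge^2(\Gml/\Gm)_{X_{\mr{\acute{e}t}}}$. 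Because $A$ is strictly Henselian local, the classical \'etale cohomology of $X$ vanishes in positive degrees, so the Leray spectral sequence for $\varepsilon_{\mr{\acute{e}t}}$ degenerates into $H_{\mr{k\acute{e}t}}^2(X,G)=\Gamma(X,R^2\varepsilon_{\mr{\acute{e}t}*}G)$; taking $m$-torsion (a kernel, hence commuting with the left-exact global sections functor) gives $H_{\mr{k\acute{e}t}}^2(X,G)[m]=\Gamma(X,(R^2\varepsilon_{\mr{\acute{e}t}*}G)[m])$. Over the strictly Henselian $X$ global sections coincide with the stalk at the closed point $x$, and the stalk functor commutes with $\otimes_{\Z}$ and with $\bigwedge^2$; using the chart $P\cong M_{X,x}/\mc{O}_{X,x}^{\times}$, the stalk of $(\Gml/\Gm)_{X_{\mr{\acute{e}t}}}$ at $x$ is $M_{X,x}^{\mr{gp}}/\mc{O}_{X,x}^{\times}\cong P^{\mr{gp}}$. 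Combining these identifications yields $H_{\mr{k\acute{e}t}}^2(X,G)[m]=G[m](-2)(X)\otimes_{\Z}\bigwedge^2P^{\mr{gp}}$, which together with the previous steps proves the corollary.

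The bulk of the argument is assembly: exactness of the Kummer sequence, degeneration over a strictly Henselian base, and compatibility of stalks with the multilinear constructions are all standard and already used in this section. The two points demanding genuine care are the divisibility of $H_{\mr{kfl}}^1(X,G)$, which is what collapses the first short exact sequence, and the bookkeeping of the $p$-primary versus prime-to-$p$ parts, where the $p$-torsion-freeness supplied by Lemma \ref{lem1.6} (3) is exactly the ingredient that replaces $n$ by $m$ and lets Theorem \ref{thm1.1} (3) take over.
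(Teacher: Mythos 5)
Your proof is correct and takes essentially the same route as the paper: the Kummer sequence together with the divisibility of $H_{\mr{kfl}}^1(X,G)$ (which, as in the paper, really uses that $G$ is a torus, so that $\varinjlim_k\mr{Hom}_X(\Z/k\Z(1),G)$ is divisible --- this is not automatic for a limit of torsion groups) collapses the boundary term, and Lemma \ref{lem1.6} (3) transfers everything to the Kummer \'etale side, where $p$-torsion-freeness replaces $n$ by $m$. The only cosmetic difference is the final identification: the paper writes $H_{\mr{k\acute{e}t}}^2(X,G)[m]=H_{\mr{k\acute{e}t}}^2(X,G[m])$ and evaluates via Theorem \ref{thm0.6}, whereas you invoke the sheaf-level Theorem \ref{thm1.1} (3) and degenerate the Leray spectral sequence over the strictly Henselian base before taking stalks --- the same ingredients in different packaging, since Theorem \ref{thm1.1} (3) is itself proved from those ingredients.
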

\begin{proof}
We have a short exact sequence 
$$0\rightarrow H_{\mr{kfl}}^1(X,G)\otimes_{\Z}\Z/n\Z\rightarrow H_{\mr{kfl}}^2(X,G[n])\rightarrow H_{\mr{kfl}}^2(X,G)[n]\rightarrow 0.$$
Since $G$ is a torus, the group $H_{\mr{kfl}}^1(X,G)=\varinjlim_{n}\mr{Hom}_{X_{\mr{kfl}}}(\Z/n\Z(1),G)\otimes_{\Z}P^{\mr{gp}}$ is divisible and whence $H_{\mr{kfl}}^1(X,G)\otimes_{\Z}\Z/n\Z=0$. Therefore for $n=m\cdot p^r$ with $(m,p)=1$, we have
\begin{align*}
H_{\mr{kfl}}^2(X,G[n])=&H_{\mr{kfl}}^2(X,G)[n]=H_{\mr{k\acute{e}t}}^2(X,G)[n]=H_{\mr{k\acute{e}t}}^2(X,G)[m]  \\
=&H_{\mr{k\acute{e}t}}^2(X,G[m])=G[m](-2)(X)\otimes_{\Z}\bigwedge^2P^{\mr{gp}}.
\end{align*}
\end{proof}

\begin{cor}\label{cor1.10}
Let $X$ be a locally  noetherian fs log scheme, and let $G$ be a torus over the underlying scheme of $X$. Let $Y\in (\mr{fs}/X)$ be such that the ranks of the stalks of the \'etale sheaf $M_Y/\mc{O}_Y^{\times}$ are at most one, and let $(\mr{st}/Y)$ be the full subcategory of $(\mr{fs}/X)$ consisting of strict fs log schemes over $Y$. Then we have that the restriction of $R^2\varepsilon_{\mr{fl}*}G$ to $(\mr{st}/Y)$ is zero.
\end{cor}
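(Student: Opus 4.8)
The plan is to deduce the statement from the local computation in Lemma \ref{lem1.6} by passing to stalks. Since $(\mr{st}/Y)_{\mr{fl}}$ is the (big) classical flat site of the underlying scheme of $Y$, it suffices to check that the stalk of $R^2\varepsilon_{\mr{fl}*}G$ vanishes at every geometric point $\bar z$ of every $Z\in(\mr{st}/Y)$. As $R^2\varepsilon_{\mr{fl}*}G$ is the sheafification of the presheaf $U\mapsto H^2_{\mr{kfl}}(U,G)$ and cohomology commutes with the filtered limit defining the strict Henselization (the local rings in play being noetherian), this stalk is canonically $H^2_{\mr{kfl}}(\tilde Z,G)$, where $\tilde Z:=\Spec\mc{O}_{Z,\bar z}^{\mr{sh}}$ is endowed with the induced log structure. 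By Lemma \ref{lem1.6} (3) this group coincides with $H^2_{\mr{k\acute{e}t}}(\tilde Z,G)$, so that we are in effect dealing with an \'etale-local object for which stalks suffice.

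First I would verify that $\tilde Z$ satisfies the hypotheses of Proposition \ref{prop1.4} and Lemma \ref{lem1.6}: its underlying ring is noetherian and strictly Henselian local, $G$ restricts to a torus over it, and it admits a chart $P\to M_{\tilde Z}$ with $P\xrightarrow{\cong}M_{\tilde Z,\bar z}/\mc{O}_{\tilde Z,\bar z}^{\times}$. The key observation is that, because $Z\to Y$ is strict, the characteristic monoid is preserved under pullback, so that $P\cong (M_Y/\mc{O}_Y^{\times})_{\bar y}$, where $\bar y$ denotes the image of $\bar z$ in $Y$. By hypothesis this monoid has rank at most one; hence $P^{\mr{gp}}$ is free of rank at most one and $\bigwedge^2 P^{\mr{gp}}=0$.

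It then remains to feed this vanishing into the local computation. By Lemma \ref{lem1.6} (3) the group $H^2_{\mr{kfl}}(\tilde Z,G)$ is torsion with $H^2_{\mr{kfl}}(\tilde Z,G)=\varinjlim_n H^2_{\mr{kfl}}(\tilde Z,G)[n]$; since $H^1_{\mr{kfl}}(\tilde Z,G)$ is divisible one has $H^2_{\mr{kfl}}(\tilde Z,G)[n]=H^2_{\mr{kfl}}(\tilde Z,G[n])$, and Corollary \ref{cor1.9} identifies the latter with $G[m](-2)(\tilde Z)\otimes_{\Z}\bigwedge^2 P^{\mr{gp}}$ for $n=m\cdot p^r$ with $(m,p)=1$. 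As $\bigwedge^2 P^{\mr{gp}}=0$, every such group vanishes, and therefore so does the colimit $H^2_{\mr{kfl}}(\tilde Z,G)$. This shows the stalk at $\bar z$ is zero, hence that $R^2\varepsilon_{\mr{fl}*}G$ restricts to zero on $(\mr{st}/Y)$. The one point genuinely demanding care is the identification of the stalk with $H^2_{\mr{kfl}}$ of the strict Henselization, together with the strictness-invariance of the rank bound on the characteristic monoid; once these are in hand, the vanishing is forced purely by the triviality of $\bigwedge^2$ of a lattice of rank at most one, and all the real content already resides in Lemma \ref{lem1.6} and Corollary \ref{cor1.9}.
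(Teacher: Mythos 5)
Your proposal is correct and follows essentially the route the paper intends: its proof of Corollary \ref{cor1.10} is exactly the reduction, via strictness of $(\mr{st}/Y)$ and the identification of the presheaf stalk at a strict Henselization with $H^2_{\mr{kfl}}(\tilde Z,G)$, to Corollary \ref{cor1.9} (together with the torsionness and divisibility statements of Lemma \ref{lem1.6}), where $\bigwedge^2 P^{\mr{gp}}=0$ for $P$ of rank at most one. Your framing of the stalk argument through the presheaf $U\mapsto H^2_{\mr{kfl}}(U,G)$ is the right one, since for an fppf sheaf alone \'etale stalks would not suffice, whereas \'etale-local vanishing of sections of the presheaf does kill its fppf sheafification on $(\mr{st}/Y)$.
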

\begin{proof}
This follows from Corollary \ref{cor1.9}.
\end{proof}

\begin{thm}\label{thm1.8}
Let $X$ be a locally  noetherian fs log scheme, and let $G$ be a torus over the underlying scheme of $X$. 
\begin{enumerate}[(1)]
\item We have 
$$R^2\varepsilon_{\mr{fl}*}G=\varinjlim_{n} (R^2\varepsilon_{\mr{fl}*}G)[n]=\bigoplus_{\text{$l$ prime}}(R^2\varepsilon_{\mr{fl}*}G)[l^\infty],$$
where $(R^2\varepsilon_{\mr{fl}*}G)[n]$ denotes the $n$-torsion subsheaf of $R^2\varepsilon_{\mr{fl}*}G$ and $(R^2\varepsilon_{\mr{fl}*}G)[l^\infty]$ denotes the $l$-primary part of $R^2\varepsilon_{\mr{fl}*}G$ for a prime number $l$.
\item We have $(R^2\varepsilon_{\mr{fl}*}G)[n]=R^2\varepsilon_{\mr{fl}*}G[n]$.
\item The $l$-primary part $(R^2\varepsilon_{\mr{fl}*}G)[l^\infty]$ is supported on the locus where $l$ is invertible.
\item If $n$ is invertible on $X$, then 
$$(R^2\varepsilon_{\mr{fl}*}G)[n]=R^2\varepsilon_{\mr{fl}*}G[n]=G[n](-2)\otimes_{\Z}\bigwedge^2(\Gml/\Gm)_{X_{\mr{fl}}}.$$
\end{enumerate}
\end{thm}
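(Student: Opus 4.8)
The plan is to reduce every assertion to the strictly Henselian local computations already carried out in Lemma \ref{lem1.6} and Corollary \ref{cor1.9}, in exactly the way Theorem \ref{thm1.1} was deduced from the corresponding local statements in the Kummer \'etale case. The key point is that $R^2\varepsilon_{\mr{fl}*}G$ is the sheafification of the presheaf $U\mapsto H^2_{\mr{kfl}}(U,G)$, so that its stalk at a geometric point $\bar{x}$ of $X$ is $H^2_{\mr{kfl}}(X_{\bar{x}},G)$, where $X_{\bar{x}}$ denotes the strict henselization of $X$ at $\bar{x}$ with the induced log structure; a limit argument, using that $G$ is of finite presentation, identifies the filtered colimit of cohomology over \'etale neighbourhoods with the cohomology of the strict henselization. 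Since $X_{\bar{x}}$ is of the form treated in Proposition \ref{prop1.4}, these stalks are governed by Lemma \ref{lem1.6}.

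For part (1), Lemma \ref{lem1.6} (3) shows that each stalk $H^2_{\mr{kfl}}(X_{\bar{x}},G)$ is torsion, so $R^2\varepsilon_{\mr{fl}*}G$ is a torsion sheaf; its presentation as the filtered colimit of its $n$-torsion subsheaves and as the direct sum over all primes $l$ of its $l$-primary parts is then purely formal. For part (3), the same stalks are $p$-torsion-free, where $p$ is the residue characteristic at $\bar{x}$; hence at a point where $l$ is not invertible (that is, $p=l$) the $l$-primary part of the stalk vanishes, which is precisely the statement that $(R^2\varepsilon_{\mr{fl}*}G)[l^\infty]$ is supported on the locus where $l$ is invertible.

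Part (2) is a direct sheaf-theoretic argument not requiring stalks. Since $G$ is a torus, the multiplication $G\xrightarrow{n}G$ is finite faithfully flat for every $n$, so the Kummer sequence $0\to G[n]\to G\xrightarrow{n}G\to0$ is exact on $(\mr{fs}/X)_{\mr{kfl}}$; the associated long exact sequence for $R^{\bullet}\varepsilon_{\mr{fl}*}$ yields
$$0\to R^1\varepsilon_{\mr{fl}*}G\otimes_{\Z}\Z/n\Z\to R^2\varepsilon_{\mr{fl}*}G[n]\to (R^2\varepsilon_{\mr{fl}*}G)[n]\to0.$$
By Theorem \ref{thm1.5}, $R^1\varepsilon_{\mr{fl}*}G=\bigl(\varinjlim_m\mc{H}om_X(\Z/m\Z(1),G)\bigr)\otimes_{\Z}(\Gml/\Gm)_{X_{\mr{fl}}}$, and the first factor is divisible (\'etale locally $G$ is split, so this colimit is a finite sum of copies of $\Q/\Z$); as the tensor product of a divisible sheaf with anything is divisible, $R^1\varepsilon_{\mr{fl}*}G$ is divisible and $R^1\varepsilon_{\mr{fl}*}G\otimes_{\Z}\Z/n\Z=0$, giving $(R^2\varepsilon_{\mr{fl}*}G)[n]=R^2\varepsilon_{\mr{fl}*}G[n]$. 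For part (4) the first equality is part (2), while the explicit formula is checked on stalks: there is a natural cup-product morphism $G[n](-2)\otimes_{\Z}\bigwedge^2(\Gml/\Gm)_{X_{\mr{fl}}}\to R^2\varepsilon_{\mr{fl}*}G[n]$ which on the stalk at $\bar{x}$ becomes the isomorphism $G[n](-2)(X_{\bar{x}})\otimes_{\Z}\bigwedge^2P^{\mr{gp}}\xrightarrow{\cong}H^2_{\mr{kfl}}(X_{\bar{x}},G[n])$ of Corollary \ref{cor1.9} (here $n$ invertible forces $m=n$ in the notation $n=m\cdot p^r$, and $P^{\mr{gp}}$ is the stalk of $(\Gml/\Gm)_{X_{\mr{fl}}}$).

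The main obstacle is the passage from these local cohomological computations to statements about the sheaf $R^2\varepsilon_{\mr{fl}*}G$ on the flat site: one must justify that its stalks are indeed the Kummer flat cohomology groups of the strict henselizations, and that torsionness, the support assertion, and the isomorphism of part (4) may all be tested on these stalks. Once this dictionary is in place, every part follows from Lemma \ref{lem1.6} and Corollary \ref{cor1.9}; the only remaining input is the divisibility of $R^1\varepsilon_{\mr{fl}*}G$ for a torus, which drives part (2).
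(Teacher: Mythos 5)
Your proposal follows the same route as the paper: all four parts rest on the strictly Henselian local computations of Lemma \ref{lem1.6} and Corollary \ref{cor1.9}; part (2) is proved exactly as in the paper (Kummer sequence plus divisibility of $R^1\varepsilon_{\mr{fl}*}G$, which comes from Theorem \ref{thm1.5}); and part (4) uses the same cup-product morphism (which the paper constructs carefully via Swan's cup products for higher direct images on arbitrary sites) together with Corollary \ref{cor1.9}. The only real deviation is cosmetic: you obtain part (3) directly from the $p$-torsion-freeness in Lemma \ref{lem1.6} (3), whereas the paper routes it through part (2) and Corollary \ref{cor1.9}; both rest on the same local input.

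One caution about the ``obstacle'' you flag at the end: the stalk dictionary you propose is not literally available on the flat site, so it needs replacement rather than justification. Strict henselizations do not furnish a conservative family of points for the fppf topos (for instance, relative Frobenius on $\Ga$ in characteristic $p$ is an fppf epimorphism that fails to be surjective on sections over strictly Henselian local rings), so neither ``the stalk of $R^2\varepsilon_{\mr{fl}*}G$ at $\bar{x}$ is $H^2_{\mr{kfl}}(X_{\bar{x}},G)$'' nor ``isomorphisms may be tested on these stalks'' is a valid general principle for fppf sheaves. The correct mechanism, which is also what the paper implicitly uses, is this: $R^2\varepsilon_{\mr{fl}*}G$ is the fppf sheafification of the presheaf $U\mapsto H^2_{\mr{kfl}}(U,G)$, so every section is fppf-locally the image of an honest class $\gamma\in H^2_{\mr{kfl}}(V,G)$, and a presheaf class mapping to zero in the sheafification dies on an fppf cover; combining this with your limit argument over \'etale neighbourhoods (cohomology commutes with the filtered limit defining the strict henselization, since $G$ is of finite presentation and $X$ is locally noetherian, and \'etale covers are in particular flat covers) reduces each of (1), (3), and (4) to Lemma \ref{lem1.6} and Corollary \ref{cor1.9} exactly as you intend. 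With that substitution your argument is complete and coincides with the paper's proof.
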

\begin{proof}
By Lemma \ref{lem1.6}, $R^2\varepsilon_{\mr{fl}*}G$ is torsion. Hence part (1) follows.

We prove part (2). Since $G$ is a torus, we have a short exact sequence $0\to G[n]\to G\xrightarrow{n}G\to0$ for any $n\geq1$. This short exact sequence induces a short exact sequence
$$0\rightarrow R^1\varepsilon_{\mr{fl}*}G\otimes_{\Z}\Z/n\Z\rightarrow R^2\varepsilon_{\mr{fl}*}G[n]\rightarrow (R^2\varepsilon_{\mr{fl}*}G)[n]\rightarrow 0.$$
Since the sheaf $R^1\varepsilon_{\mr{fl}*}G$ is divisible, we get $(R^2\varepsilon_{\mr{fl}*}G)[n]=R^2\varepsilon_{\mr{fl}*}G[n]$. 

Part (3) follows from part (2) and Corollary \ref{cor1.9}.

We are left with part (4).  By \cite[\S 3]{swa1}, in particular the part between \cite[Cor. 3.7]{swa1} and \cite[Thm. 3.8]{swa1}, we have cup-product for the higher direct image functors for the map of sites $\varepsilon_{\mr{fl}}:(\mr{fs}/X)_{\mr{kfl}}\rightarrow (\mr{fs}/X)_{\mr{fl}}$. The cup-product induces homomorphisms
$$G[n]\otimes_{\Z/n\Z}\bigwedge^2R^1\varepsilon_{\mr{fl}*}\Z/n\Z  
\rightarrow G[n]\otimes_{\Z/n\Z}R^2\varepsilon_{\mr{fl}*}\Z/n\Z\rightarrow R^2\varepsilon_{\mr{fl}*}G[n].$$
Since $G[n]\otimes_{\Z/n\Z}\bigwedge^2R^1\varepsilon_{\mr{fl}*}\Z/n\Z=G[n](-2)\otimes_{\Z}\bigwedge^2(\Gml/\Gm)_{X_{\mr{fl}}}$, we get a canonical homomorphism
$$G[n](-2)\otimes_{\Z}\bigwedge^2(\Gml/\Gm)_{X_{\mr{fl}}}\rightarrow R^2\varepsilon_{\mr{fl}*}G[n].$$
By Corollary \ref{cor1.9}, this homomorphism is an isomorphism. This finishes the proof of part (4).
\end{proof}

\begin{cor}\label{cor1.11}
Let $X$ be a locally noetherian fs log scheme such that the underlying scheme of $X$ is a $\Q$-scheme, and $G$ a torus over the underlying scheme of $X$. Then we have 
$$R^2\varepsilon_{\mr{fl}*}G=\varinjlim_{n}G[n](-2)\otimes_{\Z}\bigwedge^2(\Gml/\Gm)_{X_{\mr{fl}}}.$$
\end{cor}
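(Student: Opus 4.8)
The plan is to deduce the statement directly from Theorem \ref{thm1.8}, the only extra input being that over $\Q$ every positive integer is invertible.

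First I would note that since the underlying scheme of $X$ is a $\Q$-scheme, every positive integer $n$ is invertible on $X$. Consequently the hypothesis of part (4) of Theorem \ref{thm1.8} is satisfied for \emph{every} $n\geq 1$, so that
$$(R^2\varepsilon_{\mr{fl}*}G)[n]=G[n](-2)\otimes_{\Z}\bigwedge^2(\Gml/\Gm)_{X_{\mr{fl}}}$$
holds for all $n$, not merely for those $n$ that happen to be invertible on $X$. Next I would invoke part (1) of Theorem \ref{thm1.8}, which expresses $R^2\varepsilon_{\mr{fl}*}G$ as the filtered colimit $\varinjlim_n (R^2\varepsilon_{\mr{fl}*}G)[n]$ of its $n$-torsion subsheaves. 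Substituting the identification above term by term yields
$$R^2\varepsilon_{\mr{fl}*}G=\varinjlim_{n}G[n](-2)\otimes_{\Z}\bigwedge^2(\Gml/\Gm)_{X_{\mr{fl}}},$$
which is exactly the asserted formula.

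The only point requiring a word of care, and the closest thing to an obstacle (though it is essentially formal), is the compatibility of the isomorphisms of part (4) with the transition maps of the two colimits. On the left-hand side the transition map $(R^2\varepsilon_{\mr{fl}*}G)[n]\hookrightarrow(R^2\varepsilon_{\mr{fl}*}G)[nm]$ is the inclusion of torsion subsheaves, whereas on the right-hand side it is the map induced by $G[n]\hookrightarrow G[nm]$; these match because the cup-product isomorphism furnished by part (4) is natural in $n$. Hence the colimit of the isomorphisms is again an isomorphism, and no further computation is required. In short, the corollary is the $\Q$-scheme specialization of Theorem \ref{thm1.8}, in complete analogy with how Corollary \ref{cor1.6} specializes Theorem \ref{thm1.1}.
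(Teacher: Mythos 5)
Your proof is correct and is exactly the argument the paper intends: Corollary \ref{cor1.11} is stated without proof as an immediate specialization of Theorem \ref{thm1.8}, combining part (1) with part (4) once one observes that every $n$ is invertible on a $\Q$-scheme, just as Corollary \ref{cor1.6} follows from Theorem \ref{thm1.1}. Your extra remark on the compatibility of the part (4) isomorphisms with the transition maps of the colimits is a sound (if routine) point of care that the paper leaves implicit.
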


\begin{cor}\label{cor1.12}
Let $p$ be a prime number. Let $X$ be a locally noetherian fs log scheme such that the underlying scheme of $X$ is an $\F_p$-scheme, and $G$ a torus over the underlying scheme of $X$. Then we have 
$$R^2\varepsilon_{\mr{fl}*}G=\varinjlim_{(n,p)=1}G[n](-2)\otimes_{\Z}\bigwedge^2(\Gml/\Gm)_{X_{\mr{fl}}}.$$
\end{cor}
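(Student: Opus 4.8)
The plan is to deduce the corollary directly from Theorem~\ref{thm1.8}, in complete parallel with how Corollary~\ref{cor1.7} follows from Theorem~\ref{thm1.1} in the Kummer \'etale setting. First I would apply the primary decomposition of Theorem~\ref{thm1.8}~(1) to write
$$R^2\varepsilon_{\mr{fl}*}G=\bigoplus_{\text{$l$ prime}}(R^2\varepsilon_{\mr{fl}*}G)[l^\infty].$$
The decisive observation is that, because the underlying scheme of $X$ is an $\F_p$-scheme, the prime $p$ is invertible at no point of $X$. Hence the locus where $p$ is invertible is empty, and Theorem~\ref{thm1.8}~(3) (applied with $l=p$) forces $(R^2\varepsilon_{\mr{fl}*}G)[p^\infty]=0$. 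Only the primes $l\neq p$ survive, so
$$R^2\varepsilon_{\mr{fl}*}G=\bigoplus_{\substack{l\text{ prime}\\ l\neq p}}(R^2\varepsilon_{\mr{fl}*}G)[l^\infty]=\varinjlim_{(n,p)=1}(R^2\varepsilon_{\mr{fl}*}G)[n].$$

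Next I would identify each prime-to-$p$ torsion piece. For any $n$ with $(n,p)=1$, the integer $n$ is invertible on the $\F_p$-scheme $X$, so Theorem~\ref{thm1.8}~(4) gives
$$(R^2\varepsilon_{\mr{fl}*}G)[n]=G[n](-2)\otimes_{\Z}\bigwedge^2(\Gml/\Gm)_{X_{\mr{fl}}}.$$
Substituting this into the previous display and passing to the direct limit over the prime-to-$p$ integers yields exactly
$$R^2\varepsilon_{\mr{fl}*}G=\varinjlim_{(n,p)=1}G[n](-2)\otimes_{\Z}\bigwedge^2(\Gml/\Gm)_{X_{\mr{fl}}},$$
which is the assertion.

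I do not expect any genuine obstacle: the entire analytic content has already been absorbed into Theorem~\ref{thm1.8}, and the corollary is simply its specialization to a base on which $p$ is nowhere invertible. The only point deserving a moment's care is that the direct limit on the right is to be taken compatibly, i.e. the isomorphisms of Theorem~\ref{thm1.8}~(4) must respect the transition maps $(R^2\varepsilon_{\mr{fl}*}G)[n]\hookrightarrow(R^2\varepsilon_{\mr{fl}*}G)[nn']$ as $n$ ranges over the prime-to-$p$ integers. This compatibility is built into the cup-product construction underlying Theorem~\ref{thm1.8}~(4), so the two direct systems are identified termwise and their colimits agree.
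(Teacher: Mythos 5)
Your proof is correct and is exactly the deduction the paper intends: Corollary~\ref{cor1.12} is stated without a written proof precisely because it is the specialization of Theorem~\ref{thm1.8} to a base where $p$ is nowhere invertible, which is the route you take (primary decomposition via part~(1), vanishing of the $p$-primary part via part~(3) since the invertibility locus of $p$ is empty on an $\F_p$-scheme, and identification of the prime-to-$p$ torsion via part~(4), with the compatibility of the cup-product isomorphisms in the colimit noted correctly). Nothing is missing.
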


\begin{thm}\label{thm1.9}
Let $X$ be a locally noetherian fs log scheme, and let $G$ be a smooth affine commutative group scheme over the underlying scheme of $X$.
\begin{enumerate}[(1)]
\item If the fibres $G_{x}$ of $G$ over $X$ are all unipotent and $\kappa(x)$-solvable (see \cite[Exp. XVII, Def. 5.1.0]{sga3-2}), then we have $$R^1\varepsilon_{\mr{fl}*}G=R^2\varepsilon_{\mr{fl}*}G=0.$$
\item If $G$ is an extension of a group $U$ by a torus $T$ such that the fibres $U_x$ of $U$ over $X$ are all unipotent and $\kappa(x)$-solvable, then the canonical map $R^2\varepsilon_{\mr{fl}*}T\rightarrow R^2\varepsilon_{\mr{fl}*}G$ is an isomorphism.
\end{enumerate}
\end{thm}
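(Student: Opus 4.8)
The plan is to reduce both parts to the linear reductivity of the diagonalizable group schemes $H_n$, deducing part (2) formally from part (1). I would first dispose of $R^1\varepsilon_{\mr{fl}*}G$ in part (1). By Theorem \ref{thm1.5} we have $R^1\varepsilon_{\mr{fl}*}G=\varinjlim_n\mc{H}om_X(\Z/n\Z(1),G)\otimes_{\Z}(\Gml/\Gm)_{X_{\mr{fl}}}$, so it is enough to see that the Hom-sheaf $\mc{H}om_X(\Z/n\Z(1),G)$ vanishes. This can be checked on fibres: there $\Z/n\Z(1)=\mu_n$ is of multiplicative type while $G$ is unipotent, and the image of any homomorphism $\mu_n\to G$ would be at once of multiplicative type and unipotent, hence trivial. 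Thus $R^1\varepsilon_{\mr{fl}*}G=0$.

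For $R^2\varepsilon_{\mr{fl}*}G$ the statement is local, so I would show that every stalk vanishes, i.e. that $H^2_{\mr{kfl}}(X,G)=0$ whenever $X=\Spec A$ is strictly Henselian local and admits a chart $P\cong M_{X,x}/\mc{O}_{X,x}^{\times}$ as in Corollary \ref{cor1.2}. By Theorem \ref{thm1.7} this group equals $\varinjlim_n\check{H}^2_{\mr{kfl}}(X_n/X,G)$, and by the Key Lemma \ref{lem1.3} each term is isomorphic to $\check{H}^2_{\mr{kfl}}(x_n/x,G)$ over the log point $x=\Spec k$ with $k$ separably closed. As in the proof of Lemma \ref{lem1.6}(2) (through \cite[Lem. 3.16]{niz1}) these \v{C}ech groups are identified with the group-scheme cohomology $H^i_{X_{\mr{fl}}}(H_n,G_x)$ of the diagonalizable group $H_n$ acting trivially on the fibre $G_x$. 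The $\kappa(x)$-solvability hypothesis supplies a finite filtration of $G_x$ with successive quotients isomorphic to $\Ga$, and a d\'evissage along the resulting long exact sequences reduces the vanishing of $H^i_{X_{\mr{fl}}}(H_n,G_x)$ for $i>0$ to the single case $H^i_{X_{\mr{fl}}}(H_n,\Ga)=0$. The latter holds because $H_n$ is diagonalizable, hence linearly reductive, so \cite[Exp. I, Thm. 5.3.3]{sga3-1} kills its higher cohomology with the coefficient $\Ga$, exactly as in the proof of Lemma \ref{lem1.2}. Hence $H^2_{\mr{kfl}}(X,G)=0$ and $R^2\varepsilon_{\mr{fl}*}G=0$.

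Part (2) then follows formally. The defining extension is a short exact sequence $0\to T\to G\to U\to 0$ of smooth commutative group schemes ($U=G/T$ being smooth affine commutative), which is exact for the flat topology and hence, after the exact pullback $\varepsilon_{\mr{fl}}^{*}$, on $(\mr{fs}/X)_{\mr{kfl}}$. Its long exact sequence contains
$$R^1\varepsilon_{\mr{fl}*}U\to R^2\varepsilon_{\mr{fl}*}T\to R^2\varepsilon_{\mr{fl}*}G\to R^2\varepsilon_{\mr{fl}*}U,$$
and since $U$ has unipotent $\kappa(x)$-solvable fibres, part (1) gives $R^1\varepsilon_{\mr{fl}*}U=R^2\varepsilon_{\mr{fl}*}U=0$. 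Therefore the canonical map $R^2\varepsilon_{\mr{fl}*}T\to R^2\varepsilon_{\mr{fl}*}G$ is an isomorphism.

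The hard part will be the group-cohomology step: checking that, over the merely separably closed (possibly imperfect) residue field, the $\kappa(x)$-solvability of $G_x$ genuinely produces a filtration with $\Ga$-quotients, and that the coefficient $\Ga$ lies within the scope of the linear-reductivity vanishing of \cite[Exp. I, Thm. 5.3.3]{sga3-1}. The pay-off is that the diagonalizability of $H_n$ handles the $p$-part and the prime-to-$p$ part simultaneously, unlike the torus case of Lemma \ref{lem1.6}, so no splitting into cases is needed.
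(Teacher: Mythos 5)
Your proposal is correct and follows essentially the same route as the paper: $R^1$ via Theorem \ref{thm1.5} plus the vanishing of $\mc{H}om$ from multiplicative-type to unipotent groups (the paper simply cites \cite[Exp. XVII, Lem. 2.5]{sga3-2} where you argue fibrewise), then the reduction of $R^2$ through Theorem \ref{thm1.7}, Lemma \ref{lem1.3} and \cite[Lem. 3.16]{niz1} to $H^2_{X_{\mr{fl}}}(H_n,G)=0$ over a log point, and part (2) formally from part (1). The only divergence is that final vanishing, where the paper cites \cite[Exp. XVII, Thm. 5.1.1 (1) (c)]{sga3-2} together with \cite[Exp. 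XVII, App. I, Prop. 3.1]{sga3-2} while you re-prove it by d\'evissage to $\Ga$ and linear reductivity via \cite[Exp. I, Thm. 5.3.3]{sga3-1}; this works, and the ``hard part'' you flag is vacuous, since a composition series with $\Ga$-quotients is precisely the definition of $\kappa(x)$-solvability in \cite[Exp. XVII, Def. 5.1.0]{sga3-2}.
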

\begin{proof}
In part (2), since $G$ is smooth, $U$ has to be smooth by fppf descent. Hence part (2) follows from part (1). We are left to prove part (1).

By Theorem \ref{thm1.5}, we have 
$$R^1\varepsilon_{\mr{fl},*}G=\varinjlim_{n}\mc{H}om_{X}(\Z/n\Z(1),G)\otimes_{\Z}(\Gml/\Gm)_{X_{\mr{fl}}},$$
which is zero by \cite[Exp. XVII, Lem. 2.5]{sga3-2}. To prove $R^2\varepsilon_{\mr{fl},*}G=0$, it suffices to prove $H^2_{\mr{kfl}}(X,G)=0$ for the case that the underlying scheme of $X$ is $\Spec A$ with $A$ a strictly henselian ring and $X$ admits a chart $P\rightarrow M_X$ with $P\xrightarrow{\cong}M_{X,x}/\mc{O}_{X,x}^{\times}$. By Theorem \ref{thm1.7}, we have $\varinjlim_n\check{H}_{\mr{kfl}}^2(X_n/X,G)\xrightarrow{\cong} H_{\mr{kfl}}^2(X,G)$. By Lemma \ref{lem1.3}, we are further reduced to the case that $X$ is a log point with $A$ a separably closed field. We have that $\check{H}_{\mr{kfl}}^2(X_n/X,G)=H^2_{X_{\mr{fl}}}(H_n,G)$ by \cite[page 521-523, in particular Lem. 3.16]{niz1}. But $H^2_{X_{\mr{fl}}}(H_n,G)=0$ for any positive integer $n$ by \cite[Exp. XVII, Thm. 5.1.1 (1) (c)]{sga3-2} and \cite[Exp. XVII, App. I, Prop. 3.1]{sga3-2}. This finishes the proof.
\end{proof}

\section{The higher direct images of the logarithmic multiplicative group}\label{sec2}
In this section, we show that $R^i\varepsilon_{\mr{fl}*}\Gmlb=0$ (resp. $R^i\varepsilon_{\mr{\acute{e}t}*}\Gmlb=0$) for $i\geq 1$, where $\Gmlb$ denotes the quotient of $\Gml$ by $\Gm$ with respect to the Kummer flat topology (resp. Kummer \'etale topology). The case $i=1$ has been treated essentially in the proof of \cite[Cor. 3.21]{niz1}. As a corollary, we get $R^i\varepsilon_{\mr{fl}*}\Gm\cong R^i\varepsilon_{\mr{fl}*}\Gml$ (resp. $R^i\varepsilon_{\mr{\acute{e}t}*}\Gm\cong R^i\varepsilon_{\mr{\acute{e}t}*}\Gml$) for $i\geq 2$. By Kato's logarithmic Hilbert 90, see \cite[Cor. 3.21]{niz1}, we have $R^1\varepsilon_{\mr{fl*}}\Gml=0$ (resp. $R^1\varepsilon_{\mr{\acute{e}t}*}\Gml=0$).

We start with the strictly Henselian case.

\begin{thm}\label{thm2.1}
Let $X=\Spec A$ be an fs log scheme with $A$ a noetherian strictly Henselian local ring, $x$ the closed point of $X$, and $p$ the characteristic of the residue field of $A$. We fix a chart $P\rightarrow M_X$ satisfying $P\xrightarrow{\cong} M_{X,x}/\mc{O}_{X,x}^{\times}$.  Then we have 
\begin{enumerate}[(1)]
\item $H_{\mr{kfl}}^r(X,\Gmlb)=0$ (resp. $H_{\mr{k\acute{e}t}}^r(X,\Gmlb)=0$) for $r\geq 1$;
\item $H_{\mr{kfl}}^r(X,\Gm)\cong H_{\mr{kfl}}^r(X,\Gml)$ (resp. $H_{\mr{k\acute{e}t}}^r(X,\Gm)\cong H_{\mr{k\acute{e}t}}^r(X,\Gml)$) for $r\geq 2$.
\end{enumerate}
\end{thm}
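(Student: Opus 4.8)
The plan is to deduce part (2) from part (1), and to prove part (1) by an explicit \v{C}ech computation over the standard covers $X_n$ together with an induction on the cohomological degree.

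For the reduction, consider the short exact sequence $0\to\Gm\to\Gml\to\Gmlb\to0$ of sheaves on $(\mr{fs}/X)_{\mr{kfl}}$ (resp. on $(\mr{fs}/X)_{\mr{k\acute{e}t}}$); its long exact cohomology sequence contains
$$H^{r-1}_{\mr{kfl}}(X,\Gmlb)\to H^r_{\mr{kfl}}(X,\Gm)\to H^r_{\mr{kfl}}(X,\Gml)\to H^r_{\mr{kfl}}(X,\Gmlb).$$
For $r\geq2$ both outer terms vanish by part (1), so the middle arrow is an isomorphism, which is part (2). Thus everything reduces to part (1). The structural input I would set up first is that $\Gmlb$ is \emph{uniquely divisible} as a Kummer sheaf, and that $\Gmlb(Y)=Q^{\mr{gp}}\otimes_{\Z}\Q$ whenever $Y$ has strictly Henselian local underlying scheme and a chart $Q\xrightarrow{\cong}M_{Y,y}/\mc{O}_{Y,y}^{\times}$. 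Unique divisibility holds because $\overline{M}^{\mr{gp}}$ is torsion-free for fs monoids (so multiplication by $n$ is injective on $\Gmlb$) and because Kummer covers extract $n$-th roots in $M^{\mr{gp}}$ (so it is surjective). For the value on $Y$ I would use the same exact sequence: $\mr{Pic}(Y)=0$ gives $\Gml(Y)/\Gm(Y)=Q^{\mr{gp}}$; Kato's logarithmic Hilbert~90 (Theorem \ref{thm0.5}) together with $H^1_{\mr{fl}}(Y,M^{\mr{gp}})=0$ gives $H^1_{\mr{kfl}}(Y,\Gml)=0$; and Theorem \ref{thm1.4} applied to $\Gm$ gives $H^1_{\mr{kfl}}(Y,\Gm)=\Q/\Z\otimes Q^{\mr{gp}}$. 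Hence $\Gmlb(Y)$ is an extension of $\Q/\Z\otimes Q^{\mr{gp}}$ by $Q^{\mr{gp}}$, which by unique divisibility must be $Q^{\mr{gp}}\otimes\Q$.

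The heart of the argument is the vanishing $\check{H}^i(X_n/X,\Gmlb)=0$ for $i\geq1$. Writing $n=p^r n'$ with $(n',p)=1$, the group scheme $H_n$ splits as $(H_{n'})_X\times(H_{p^r})_X$ with $(H_{n'})_X$ constant and $(H_{p^r})_X$ infinitesimal, so the $(i+1)$-fold fibre product $X_{n,i}=X_n\times_{\Spec\Z}H_n^i$ is a disjoint union, indexed by $H_{n'}(X)^i$, of copies of the strictly Henselian local scheme $X_n\times_{\Spec\Z}H_{p^r}^i$; the infinitesimal factors add no components and no log structure. By the previous step $\Gmlb(X_{n,i})=\mr{Map}(H_{n'}(X)^i,V)$ with $V:=P^{\mr{gp}}\otimes\Q$, the Galois group acting trivially, so the \v{C}ech complex computes the group cohomology $H^\ast(H_{n'}(X),V)$, which vanishes in positive degrees because $V$ is a $\Q$-vector space and $H_{n'}(X)$ is finite. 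In the Kummer \'etale case this already finishes the proof by a divisibility/torsion dichotomy: $\Gmlb$ being uniquely divisible makes each $H^i_{\mr{k\acute{e}t}}(X,\Gmlb)$ uniquely divisible, while Corollary \ref{cor1.4} makes it torsion for $i\geq1$, hence zero.

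In the Kummer flat case no general torsion statement is available, so I would conclude by induction on $j$ via the spectral sequence (\ref{eq1.1}). Assuming $H^{j'}_{\mr{kfl}}(Y,\Gmlb)=0$ for all such $Y$ and $1\leq j'<j$ (equivalently $R^{j'}\varepsilon_{\mr{fl}*}\Gmlb=0$), the rows $1\leq j'<j$ of the $E_2$-page vanish on the covers $X_{n,i}$, and the $j'=0$ row vanishes for $i\geq1$ by the \v{C}ech computation, so the only surviving term in total degree $j$ is $\check{H}^0(\mathscr{X}_{\N},\underline{H}^j_{\mr{kfl}}(\Gmlb))$; consequently $H^j_{\mr{kfl}}(X,\Gmlb)$ injects, via restriction, into $\varinjlim_n H^j_{\mr{kfl}}(X_n,\Gmlb)$, and it suffices to show every class dies on some $X_n$. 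Here I would imitate Proposition \ref{prop1.4}: such a class dies on a Kummer flat cover (Proposition \ref{prop1.1}), which by the cofinality input of \cite[Prop.~2.15]{niz1} and \cite[Cor.~2.16]{niz1} is refined through some $X_n$ by a classical flat cover; in the filtration of $H^j_{\mr{kfl}}(X_n,\Gmlb)$ coming from $H^a_{\mr{fl}}(X_n,R^b\varepsilon_{\mr{fl}*}\Gmlb)\Rightarrow H^{a+b}_{\mr{kfl}}(X_n,\Gmlb)$ every graded piece then vanishes, using the inductive hypothesis for $1\leq b<j$ and the fact that $\varepsilon_{\mr{fl}*}\Gmlb$ is locally a constant uniquely divisible group on the strictly Henselian $X_n$ (killing $H^{\geq1}_{\mr{fl}}$). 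The base case $j=1$ is immediate from $H^1_{\mr{kfl}}=\check{H}^1_{\mr{kfl}}=\varinjlim_n\check{H}^1_{\mr{kfl}}(X_n/X,\Gmlb)=0$ by Corollary \ref{cor1.1}. The main obstacle is precisely this flat $p$-primary behaviour: the infinitesimal covers $H_{p^r}$ break the clean ``disjoint union of copies'' picture that trivializes the prime-to-$p$ part and force the more delicate induction together with the cofinality input; establishing $\Gmlb(Y)=Q^{\mr{gp}}\otimes\Q$ in this setting (equivalently $H^1_{\mr{kfl}}(Y,\Gml)=0$ together with the exact value of $H^1_{\mr{kfl}}(Y,\Gm)$) is the technical crux on which the whole computation rests.
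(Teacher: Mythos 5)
Your overall strategy coincides with the paper's: part (2) via the long exact sequence for $0\to\Gm\to\Gml\to\Gmlb\to0$, the computation $\check{H}_{\mr{kfl}}^i(X_n/X,\Gmlb)=H^i(H_{n'}(X),(P^{1/n})^{\mr{gp}}\otimes_\Z\Q)$ (the paper's Lemma \ref{lem2.1}), and then an induction on the degree in which one first shows $H^j_{\mr{kfl}}(X,\Gmlb)$ injects into $H^j_{\mr{kfl}}(X_n,\Gmlb)$ using the vanishing of the lower rows of the \v{C}ech-to-derived spectral sequence, and then kills any class by refining a trivializing Kummer flat cover through some $X_n$ by a classical flat cover and using the Leray filtration together with $H^{>0}_{\mr{fl}}(X_n,\varepsilon_{\mr{fl}*}\Gmlb)=0$ as in (\ref{eq2.3}). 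You add two genuine improvements: you \emph{derive} $\Gmlb(Y)=Q^{\mr{gp}}\otimes_\Z\Q$ from Kato's logarithmic Hilbert 90, $\mr{Pic}(Y)=0$, Theorem \ref{thm1.4} for $\Gm$, and unique divisibility, where the paper simply asserts this in (\ref{eq2.1}); and in the Kummer \'etale case you shortcut the induction entirely by the dichotomy ``$H^i_{\mr{k\acute{e}t}}(X,\Gmlb)$ is divisible but torsion by Corollary \ref{cor1.4}, hence zero,'' whereas the paper reruns the same induction verbatim for the \'etale topology. Your shortcut is a real simplification of half the theorem; the paper's uniform treatment buys nothing extra there, since Corollary \ref{cor1.4} is available only in the \'etale setting anyway (which is exactly why the flat case needs the induction in both your account and the paper's).

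Three points need repair, none fatal. First, $H_{p^r}$ is \emph{not} infinitesimal when $A$ has mixed characteristic; what your argument actually needs is that $X_n\times_{\Spec\Z}H_{p^r}^i$ is strictly Henselian local with the same chart, which holds because it is finite over the Henselian local $A$ and its closed fibre is a one-point scheme (as $T^{p^r}-1=(T-1)^{p^r}$ over the residue field). Second, on the Kummer \emph{\'etale} site $\Gmlb$ is not uniquely divisible when $p>0$: multiplication by $p$ is injective (torsion-freeness of $\overline{M}^{\mr{gp}}$) but not surjective, since k\'et covers only extract prime-to-$p$ roots --- this is visible in the paper's Lemma \ref{lem2.1}, where the \'etale $\check{H}^0$ is $(P^{1/n})^{\mr{gp}}\otimes_\Z\Q'$ rather than $\otimes_\Z\Q$. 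Your dichotomy survives, but only because Corollary \ref{cor1.4} gives $p$-torsion-freeness in addition to torsionness: unique $\ell$-divisibility for $\ell\neq p$ kills all prime-to-$p$ torsion, and the $p$-part is killed by Corollary \ref{cor1.4}; you should say this. Third, your base case $j=1$ is not ``immediate by Corollary \ref{cor1.1}'': that corollary gives $H^1_{\mr{kfl}}(X,\Gmlb)=\check{H}^1_{\mr{kfl}}(X,\Gmlb)$, but the latter is a limit over \emph{all} Kummer flat covers, and the tower $\{X_n\}$ is not cofinal among them --- covers are only refined through $X_n$ after a further classical flat cover. In the exact sequence (\ref{eq1.3}) one must still show that the image of $H^1_{\mr{kfl}}(X,\Gmlb)$ in $\varinjlim_n\check{H}^0_{\mr{kfl}}(X_n/X,\underline{H}^1_{\mr{kfl}}(\Gmlb))$ vanishes; this is done exactly by the dies-on-a-cover/refinement/Leray mechanism you already describe for the induction step (and which is the paper's $r=1$ argument), so the fix costs nothing, but as written the base case is unproved.
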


Before going to the proof of Theorem \ref{thm2.1}, we prove the following lemma.

\begin{lem}\label{lem2.1}
Let the notation and the assumptions be as in Theorem \ref{thm2.1}. For the \v{C}ech cohomology for the Kummer flat cover $X_n/X$, we have 
$$\check{H}_{\mr{kfl}}^i(X_n/X,\Gmlb)=\begin{cases}
(P^{\frac{1}{n}})^{\mr{gp}}\otimes_{\Z}\Q, & \text{if $i=0$;} \\
0,  &\text{if $i>0$.}
\end{cases}
$$
If $(n,p)=1$, for the \v{C}ech cohomology for the Kummer \'etale cover $X_n/X$, we have 
$$\check{H}_{\mr{k\acute{e}t}}^i(X_n/X,\Gmlb)=\begin{cases}
(P^{\frac{1}{n}})^{\mr{gp}}\otimes_{\Z}\Q', & \text{if $i=0$;} \\
0,  &\text{if $i>0$.}
\end{cases}
$$
\end{lem}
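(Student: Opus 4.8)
The plan is to reduce both assertions to a single computation of the cohomology of a finite abelian group, by first pinning down the value of the sheaf $\Gmlb$ on the schemes $X_{n,i}:=X_n\times_X\cdots\times_X X_n$ ($i+1$ factors) $=H_n^i\times_{\Spec\Z}X_n$ occurring in the \v{C}ech complex (in the notation of Lemma \ref{lem1.5}), and then recognizing the resulting complex as a standard group-cohomology complex. As a preliminary I would record that $\Gmlb$ is a sheaf of $\Q$-vector spaces on $\Xkfl$ (resp.\ of $\Q'$-modules on $\Xket$): from $0\to\Gm\to\Gml\to\Gmlb\to0$ and the fact that a Kummer flat (resp.\ Kummer \'etale) cover adjoins roots of sections of $M^{\mr{gp}}$ of arbitrary order (resp.\ of order prime to $p$), one checks that $\Gmlb$ is uniquely divisible (resp.\ uniquely $\ell$-divisible for $\ell\neq p$) and torsion-free, the stalks being $(M^{\mr{gp}}/\mc O^\times)\otimes\Q$ (resp.\ $\otimes\Q'$).

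The heart of the matter is to compute $\Gmlb(X_{n,i})$. Each $X_{n,i}$ is finite over the strictly Henselian local $X$, hence is a finite disjoint union of spectra of strictly Henselian local rings; in particular $\mr{Pic}(X_{n,i})=0$ and $H^r(X_{n,i},\underline N)=0$ for $r>0$ and any constant sheaf $\underline N$. From $0\to\mc O^\times\to M^{\mr{gp}}\to M^{\mr{gp}}/\mc O^\times\to0$ and $\mr{Pic}(X_{n,i})=0$ I get $\Gml(X_{n,i})/\Gm(X_{n,i})=\Gamma(X_{n,i},M^{\mr{gp}}/\mc O^\times)$, while Kato's logarithmic Hilbert 90 (Theorem \ref{thm0.5}, together with $\varepsilon_*\Gml=\Gml$) yields $H^1_{\mr{kfl}}(X_{n,i},\Gml)=H^1_{\mr{fl}}(X_{n,i},M^{\mr{gp}})=0$ (resp.\ in the k\'et case). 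The long exact cohomology sequence of $0\to\Gm\to\Gml\to\Gmlb\to0$ then exhibits $\Gmlb(X_{n,i})$ as an extension of the torsion group $H^1(X_{n,i},\Gm)$ by $\Gamma(X_{n,i},M^{\mr{gp}}/\mc O^\times)$; since $\Gmlb(X_{n,i})$ is uniquely divisible (resp.\ a $\Q'$-module), this forces $\Gmlb(X_{n,i})=\Gamma(X_{n,i},M^{\mr{gp}}/\mc O^\times)\otimes\Q$ (resp.\ $\otimes\Q'$). Because the characteristic sheaf $M^{\mr{gp}}/\mc O^\times$ on each $X_{n,i}$ is the constant sheaf $\underline{(P^{1/n})^{\mr{gp}}}$ and all face maps $X_{n,i+1}\to X_{n,i}$ are strict, the \v{C}ech complex for $\Gmlb$ is identified with the \v{C}ech complex of $\underline{(P^{1/n})^{\mr{gp}}}$ for the cover $X_n/X$, tensored with $\Q$ (resp.\ $\Q'$).

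It then remains to compute $\check H^i(X_n/X,\underline{(P^{1/n})^{\mr{gp}}})$, and here the input is the structure of $\pi_0$. Writing $n=p^a n'$ with $(n',p)=1$, the scheme $X_n$ is connected (its underlying ring is local), and over the separably closed residue field $H_n$ has connected infinitesimal $p$-part and \'etale part a constant finite abelian group $\Gamma$ of order $n'^{\,\mr{rk}\,P^{\mr{gp}}}$; hence $\pi_0(X_{n,i})=\Gamma^i$, with the face maps turning $\pi_0(X_{n,\bullet})$ into the nerve of $\Gamma$ acting trivially on a point. Therefore the complex computing $\check H^i(X_n/X,\underline{(P^{1/n})^{\mr{gp}}})$ is the standard complex computing $H^i(\Gamma,(P^{1/n})^{\mr{gp}})$ for the trivial action. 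Since $H^i(\Gamma,-)$ is killed by $|\Gamma|$, which is invertible in both $\Q$ and $\Q'$, the positive-degree cohomology vanishes after tensoring, while in degree $0$ one recovers $(P^{1/n})^{\mr{gp}}\otimes\Q$ (resp.\ $\otimes\Q'$); the $H^0$-value also follows formally from the sheaf axiom since $\check H^0=\Gmlb(X)$.

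The main obstacle I expect is the identification in the middle paragraph: verifying that $\Gmlb(X_{n,i})=\Gamma(X_{n,i},M^{\mr{gp}}/\mc O^\times)\otimes\Q$ compatibly with the simplicial structure, which relies both on the vanishing $H^1(X_{n,i},\Gml)=0$ coming from logarithmic Hilbert 90 and on the strictness of the face maps, so that the characteristic sheaves assemble into the constant simplicial system $\underline{(P^{1/n})^{\mr{gp}}}$; once this is secured, the rest is the routine vanishing of finite-group cohomology after inverting the group order. As an alternative to the explicit $\pi_0$-computation, one may instead identify the \v{C}ech cohomology of the $H_n$-torsor $X_n/X$ with the group-scheme cohomology $H^i_{X_{\mr{fl}}}(H_n,\Gmlb(X_n))$ (as in Lemma \ref{lem1.6}) and use that $H_n$ has order invertible in $\Q$ (resp.\ $\Q'$) while acting trivially on the uniquely divisible module $\Gmlb(X_n)=(P^{1/n})^{\mr{gp}}\otimes\Q$.
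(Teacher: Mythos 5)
Your proposal is correct and takes essentially the same route as the paper: both compute $\Gmlb$ on the fibre products $X_n\times_X\cdots\times_XX_n\cong X_n\times_{\Spec\Z}H_{p^t}^r\times_{\Spec\Z}H_{n'}^r$ using that the $p$-part is connected and the prime-to-$p$ part is constant, identify the \v{C}ech complex with the standard complex computing the cohomology of the trivial $H_{n'}(X)$-module $(P^{1/n})^{\mr{gp}}\otimes_{\Z}\Q$ (resp. $\otimes_{\Z}\Q'$), and conclude vanishing in positive degrees because the order of the finite group is invertible in the coefficients. The only difference is bookkeeping: where you derive $\Gmlb(X_{n,i})=\Gamma(X_{n,i},M^{\mr{gp}}/\mc{O}^{\times})\otimes_{\Z}\Q$ structurally from the long exact sequence of $0\to\Gm\to\Gml\to\Gmlb\to0$, logarithmic Hilbert 90, Pic-vanishing and unique divisibility (all of which check out, including the torsionness of $H^1_{\mr{kfl}}(X_{n,i},\Gm)$ via Theorem \ref{thm1.4} applied componentwise), the paper simply evaluates $\Gmlb$ on strictly Henselian bases directly and verifies the bar-complex identification by explicit chart-level formulas for the cosimplicial ring maps $d_{r,i}$ rather than by your $\pi_0$/nerve argument.
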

\begin{proof}
We only deal with the Kummer flat case, the Kummer \'etale case can be done in the same way.

It is clear that 
\begin{equation}\label{eq2.1}
\check{H}_{\mr{kfl}}^0(X_n/X,\Gmlb)=(\Gml/\Gm)(X_n)=(P^{\frac{1}{n}})^{\mr{gp}}\otimes_{\Z}\Q.
\end{equation}
Let $n=m\cdot p^t$ with $(m,p)=1$, then $X\times_{\Spec\Z}H_m^r$ is a constant group scheme over $X$ and $X\times_{\Spec\Z}H_{p^t}^r$ is a connected group scheme over $X$, therefore we have
\begin{align*}
\Gmlb(\underbrace{X_n\times_X\cdots\times_XX_n}_{\text{$r+1$ times}})=&\Gmlb(X_n\times_{\Spec\Z}H_n^r) \\
=&\Gmlb((X_n\times_{\Spec\Z}H_{p^t}^r)\times_X(X\times_{\Spec\Z}H_m^r)) \\
=&\prod_{h\in H_m(X)^r}\Gmlb(X_n\times_{\Spec\Z}H_{p^t}^r)  \\
=&\prod_{h\in H_m(X)^r}\Gmlb(X_n)  \\
=&\prod_{h\in H_m(X)^r}(P^{\frac{1}{n}})^{\mr{gp}}\otimes_{\Z}\Q
\\
=&\mr{Map}(H_m(X)^r,(P^{\frac{1}{n}})^{\mr{gp}}\otimes_{\Z}\Q).
\end{align*}
To compute the higher \v{C}ech cohomology groups, we consider the \v{C}ech complex 
\begin{equation}\label{eq2.2}
\Gmlb(X_n)\xrightarrow{d_0}\Gmlb(X_n\times_XX_n)\xrightarrow{d_1}\Gmlb(X_n\times_XX_n\times_XX_n)\xrightarrow{d_2}\cdots
\end{equation}
for $\Gmlb$ with respect to the cover $X_n/X$. Let $\Gamma_n:=(P^{\frac{1}{n}})^{\mr{gp}}/P^{\mr{gp}}$. By \cite[Chap. III, Example 2.6]{mil1}, the \v{C}ech nerve of the Kummer flat cover $X_n/X$ can be identified with the sequence
$$\xymatrix{
X_n & X_n\times H_n\ar@<0.5ex>[l]^-{d_{1,0}}\ar@<-0.5ex>[l]_-{d_{1,1}} &X_n\times H_n^2\ar@<1ex>[l]^-{d_{2,0}}\ar@<0ex>[l]\ar@<-1ex>[l]_-{d_{2,2}} &X_n\times H_n^3\ar@<1.5ex>[l]^-{d_{3,0}}\ar@<0.5ex>[l]\ar@<-0.5ex>[l]\ar@<-1.5ex>[l]_-{d_{3,3}}\cdots ,
}$$
where the map $d_{r,i}$ on the ring level is given by the $A$-linear ring homomorphism
\begin{align*}
A\otimes_{\Z[P]}\Z[P^{\frac{1}{n}}\oplus\Gamma_n^{r-1}]&\rightarrow A\otimes_{\Z[P]}\Z[P^{\frac{1}{n}}\oplus\Gamma_n^{r}]
  \\
(a,\bar{a}_1,\cdots,\bar{a}_{r-1})&\mapsto
\begin{cases} (a,\bar{a},\bar{a}_1,\cdots,\bar{a}_{r-1}),&\text{if $i=0$;}  \\
(a,\bar{a}_1,\cdots,\bar{a}_{i},\bar{a}_i,\cdots,\bar{a}_{r-1}),&\text{if $0<i<r$;}  \\
 (a,\bar{a}_1,\cdots,\bar{a}_{r-1},0),&\text{if $i=r$.}
\end{cases}
\end{align*}
for any $(a,\bar{a}_1,\cdots,\bar{a}_{r-1})\in P^{\frac{1}{n}}\oplus\Gamma_n^{r-1}$. If $m=1$, i.e. $n=p^t$, we have 
$$\Gmlb(X_{p^t}\times_{\Spec\Z}H_{p^t}^r)=(P^{\frac{1}{p^t}})^{\mr{gp}}\otimes_{\Z}\Q.$$
By the description of $d_{r,i}$, the map 
$$d_{r,i}^*:\Gmlb(X_{p^t}\times_{\Spec\Z}H_{p^t}^{r-1})\rightarrow \Gmlb(X_{p^t}\times_{\Spec\Z}H_{p^t}^r)$$
can be identified with the identity map 
$$\mr{Id}:(P^{\frac{1}{p^t}})^{\mr{gp}}\otimes_{\Z}\Q\rightarrow (P^{\frac{1}{p^t}})^{\mr{gp}}\otimes_{\Z}\Q.$$
In general, the map 
$$d_{r,i}^*:\Gmlb(X_{n}\times_{\Spec\Z}H_{n}^{r-1})\rightarrow \Gmlb(X_{n}\times_{\Spec\Z}H_{n}^r)$$
can be identified with the map
\begin{align*}
\mr{Map}(H_m(X)^{r-1},(P^{\frac{1}{n}})^{\mr{gp}}\otimes_{\Z}\Q)\xrightarrow{\partial_{r,i}} \mr{Map}(H_m(X)^r,(P^{\frac{1}{n}})^{\mr{gp}}\otimes_{\Z}\Q)  \\
f\mapsto (\partial_{r,i}(f):(h_1,\cdots,h_r)\mapsto 
\begin{cases}
f(h_2,\cdots,\cdots,h_r), &\text{if $i=0$} \\
f(h_1,\cdots,h_i+h_{i+1},\cdots,h_r), &\text{if $0<i<r$} \\
f(h_1,\cdots\cdots,h_{r-1}), &\text{if $i=r$} 
\end{cases})
\end{align*}
Therefore the complex (\ref{eq2.2}) can be identified with the standard complex
$$(P^{\frac{1}{n}})^{\mr{gp}}\otimes_{\Z}\Q\rightarrow \mr{Map}(H_m(X),(P^{\frac{1}{n}})^{\mr{gp}}\otimes_{\Z}\Q)\rightarrow \mr{Map}(H_m(X)^2,(P^{\frac{1}{n}})^{\mr{gp}}\otimes_{\Z}\Q)\rightarrow\cdots$$
for computing the group cohomology of the trivial $H_m(X)$-module $(P^{\frac{1}{n}})^{\mr{gp}}\otimes_{\Z}\Q$. It follows that 
$$\check{H}_{\mr{kfl}}^i(X_n/X,\Gmlb)\cong H^i(H_m(X),(P^{\frac{1}{n}})^{\mr{gp}}\otimes_{\Z}\Q)=\begin{cases}
(P^{\frac{1}{n}})^{\mr{gp}}\otimes_{\Z}\Q, & \text{if $i=0$;} \\
0,  &\text{if $i>0$.}
\end{cases}$$
\end{proof}

\begin{proof}[Proof of Theorem \ref{thm2.1}:]
Part (2) follows from part (1) clearly. We only deal with the Kummer flat case, the Kummer \'etale case can be proven by the same way.

We use induction on $r$ to prove part (1). 

First of all we consider the case $r=1$. We want to prove $H_{\mr{kfl}}^1(X,\Gmlb)=0$. The spectral sequence $\check{H}_{\mr{kfl}}^i(X_n/X,\underline{H}_{\mr{kfl}}^j(\Gmlb))\Rightarrow H^{i+j}_{\mr{kfl}}(X,\Gmlb)$ gives rise to an exact sequence
$$0\rightarrow \check{H}_{\mr{kfl}}^1(X_n/X,\Gmlb)\rightarrow H_{\mr{kfl}}^1(X,\Gmlb)\xrightarrow{u} \check{H}_{\mr{kfl}}^0(X_n/X,\underline{H}_{\mr{kfl}}^1(\Gmlb)).$$
By Lemma \ref{lem2.1}, we have that $\check{H}_{\mr{kfl}}^1(X_n/X,\Gmlb)$ vanishes, and thus $u$ is injective. We also have a canonical injection 
$$v:\check{H}_{\mr{kfl}}^0(X_n/X,\underline{H}_{\mr{fl}}^1(\Gmlb))\hookrightarrow H_{\mr{kfl}}^1(X_n,\Gmlb).$$
The composition $v\circ u$ is nothing but the pullback map $f_n^*:H_{\mr{kfl}}^1(X,\Gmlb)\rightarrow H_{\mr{kfl}}^1(X_n,\Gmlb)$, where $f_n$ denotes the cover map $X_n\rightarrow X$. Hence we get an injection $f_n^*:H_{\mr{kfl}}^1(X,\Gmlb)\hookrightarrow H_{\mr{kfl}}^1(X_n,\Gmlb)$. Passing to the direct limit, we get a canonical injection $H_{\mr{kfl}}^1(X,\Gmlb)\hookrightarrow \varinjlim_{n}H_{\mr{kfl}}^1(X_n,\Gmlb)$. Hence it suffices to show that $\varinjlim_{n}H_{\mr{kfl}}^1(X_n,\Gmlb)=0$. Let $\alpha$ be an element of $H_{\mr{kfl}}^1(X_n,\Gmlb)$, and let $T\rightarrow X_n$ be a Kummer flat cover such that $\alpha$ dies in $H_{\mr{kfl}}^1(T,\Gmlb)$. By \cite[Cor. 2.16]{niz1}, we may assume that for some $m$, we have a factorization $T\rightarrow X_{mn}\rightarrow X_n$, where $T\rightarrow X_{mn}$ is a classical flat cover. It follows that the class $\alpha$ on $X_{mn}$ is trivialized by a classical flat cover, i.e. $\alpha$ is mapped to zero along the map $H_{\mr{kfl}}^1(X_{mn},\Gmlb)\rightarrow H_{\mr{fl}}^0(X_{mn},R^1\varepsilon_{\mr{fl}*}\Gmlb)$. Hence $\alpha$ lies in the image of the map $H_{\mr{fl}}^1(X_{mn},\varepsilon_{\mr{fl}*}\Gmlb)\hookrightarrow H_{k\mr{fl}}^1(X_{mn},\Gmlb)$. But 
\begin{equation}\label{eq2.3}
\begin{split}
H_{\mr{fl}}^1(X_{mn},\varepsilon_{\mr{fl}*}\Gmlb)=&H_{\mr{fl}}^1(X_{mn},(\Gml/\Gm)_{X_{\mr{fl}}}\otimes_{\Z}\Q)  \\ =&H_{\mr{\acute{e}t}}^1(X_{mn},(\Gml/\Gm)_{X_{\mr{\acute{e}t}}}\otimes_{\Z}\Q)   \\
=&0.
\end{split}
\end{equation}
This finishes the proof of $H_{\mr{kfl}}^1(X,\Gmlb)=0$.

Now we fix a positive integer $r_0$, and assume that $H_{\mr{kfl}}^r(Y,\Gmlb)=0$ for all $0<r\leq r_0$ and all fs log schemes $Y$ satisfying the conditions for $X$ in the statement of the theorem. Note that this assumption implies that $R^r\varepsilon_{\mr{fl}*}\Gmlb=0$ for $0<r\leq r_0$. We are going to prove 
$$H_{\mr{kfl}}^{r_0+1}(X,\Gmlb)=0.$$ in two steps.

In the first step, we prove that the canonical map
\begin{equation}\label{eq2.4}
H^{r_0+1}_{\mr{kfl}}(X,\Gmlb)\rightarrow H^{r_0+1}_{\mr{kfl}}(X_n,\Gmlb)
\end{equation}
is injective for any $n>0$. Clearly $X_n$ satisfies the conditions for $X$ in the statement. For any $0<j\leq r_0$ and any $i\geq 0$, consider the $i$-th \v{C}ech cohomology group $\check{H}_{\mr{kfl}}^i(X_n/X,\underline{H}_{\mr{kfl}}^j(\Gmlb))$ of the Kummer flat cover $X_n/X$ with coefficients in the presheaf $\underline{H}_{\mr{kfl}}^j(\Gmlb)$. Since 
$$\underbrace{X_n\times_X\cdots\times_XX_n}_{\text{$k+1$ times}}=X_n\times_{\Spec\Z}H_n^k=(X_n\times_{\Spec\Z}H_{p^{t}}^k)\times_{X}(X\times_{\Spec\Z}H_{n'}^k)$$
with $n=n'\cdot p^t$ and $(n',p)=1$, $X_n\times_{\Spec\Z}H_{p^{t}}^k$ satisfies the conditions for $X$ in the statement, and $X\times_{\Spec\Z}H_{n'}^k$ is a constant group scheme over $X$, so we get 
$$\Gamma(\underbrace{X_n\times_X\cdots\times_XX_n}_{\text{$k+1$ times}},\underline{H}_{\mr{kfl}}^j(\Gmlb))=H_{\mr{kfl}}^j(\underbrace{X_n\times_X\cdots\times_XX_n}_{\text{$k+1$ times}},\Gmlb)=0.$$
It follows that $\check{H}_{\mr{kfl}}^i(X_n/X,\underline{H}_{\mr{kfl}}^j(\Gmlb))=0$ for any $0<j\leq r_0$ and any $i\geq 0$.
Then the spectral sequence 
$$\check{H}_{\mr{kfl}}^i(X_n/X,\underline{H}_{\mr{kfl}}^j(\Gmlb))\Rightarrow H^{i+j}_{\mr{kfl}}(X,\Gmlb)$$
implies that 
$$H^{r_0+1}_{\mr{kfl}}(X,\Gmlb)\xrightarrow{\cong} \check{H}_{\mr{kfl}}^0(X_n/X,\underline{H}_{\mr{kfl}}^{r_0+1}(\Gmlb)).$$
It follows that the canonical map
(\ref{eq2.4}) is injective for any $n>0$. 

In the second step, we finish the proof of $H_{\mr{kfl}}^{r_0+1}(X,\Gmlb)=0$. Let $\beta$ be any element of $H^{r_0+1}_{\mr{kfl}}(X,\Gmlb)$, and let $T\rightarrow X$ be a Kummer flat cover such that $\beta$ dies in $H^{r_0+1}_{\mr{kfl}}(T,\Gmlb)$. By \cite[Cor. 2.16]{niz1}, we may assume that for some $m$, we have a factorization $T\rightarrow X_m\rightarrow X$ such that $T\rightarrow X_m$ is a classical flat cover. It follows that the class $\beta$ on $X_m$ is trivialized by a classical flat cover, i.e. it lies in the kernel of the canonical map 
$$H^{r_0+1}_{\mr{kfl}}(X_m,\Gmlb)\rightarrow H^0_{\mr{fl}}(X_m,R^{r_0+1}\varepsilon_{\mr{fl}*}\Gmlb).$$
Consider the spectral sequence 
$$H^i_{\mr{fl}}(X_m,R^j\varepsilon_{\mr{fl}*}\Gmlb)\Rightarrow H^{i+j}_{\mr{kfl}}(X_m,\Gmlb).$$
The vanishing of $R^j\varepsilon_{\mr{fl}*}\Gmlb$ for $0<j\leq r_0$ gives rise to an exact sequence
$$0\rightarrow H_{\mr{fl}}^{r_0+1}(X_m,\varepsilon_{\mr{fl}*}\Gmlb)\rightarrow H_{\mr{kfl}}^{r_0+1}(X_m,\Gmlb)\rightarrow H^0_{\mr{fl}}(X_m,R^{r_0+1}\varepsilon_{\mr{fl}*}\Gmlb).$$
Hence the class $\beta$ on $X_m$ comes from $H_{\mr{fl}}^{r_0+1}(X_m,\varepsilon_{\mr{fl}*}\Gmlb)$ which is zero by the same reason as in (\ref{eq2.3}). Hence the class $\beta$ on $X_m$ is zero. Thus $\beta=0$ in $H_{\mr{kfl}}^{r_0+1}(X,\Gmlb)$ by the injectivity of the map (\ref{eq2.4}). This finishes the proof of $H_{\mr{kfl}}^{r_0+1}(X,\Gmlb)=0$.
\end{proof}

\begin{thm}
Let $X$ be an fs log scheme with its underlying scheme locally  noetherian. Then we have:
\begin{enumerate}[(1)]
\item $R^r\varepsilon_{\mr{fl}*}\Gmlb=0$ (resp. $R^r\varepsilon_{\mr{\acute{e}t}*}\Gmlb=0$) for $r\geq 1$;
\item the canonical map $R^r\varepsilon_{\mr{fl}*}\Gm\rightarrow R^r\varepsilon_{\mr{fl}*}\Gml$ (resp. $R^r\varepsilon_{\mr{\acute{e}t}*}\Gm\rightarrow R^r\varepsilon_{\mr{\acute{e}t}*}\Gml$) is an isomorphism for $r\geq 2$.
\end{enumerate}
\end{thm}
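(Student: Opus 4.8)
The plan is to deduce the global vanishing in part (1) from the strictly Henselian computation already carried out in Theorem~\ref{thm2.1}, and then to obtain part (2) as a formal consequence of the long exact sequence attached to $0\to\Gm\to\Gml\to\Gmlb\to0$. I will spell out the Kummer flat case; the Kummer \'etale case is the same argument with flat covers replaced by \'etale covers, and is in fact slightly easier since the \'etale topos has enough points.

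For part (1), recall that $R^r\varepsilon_{\mr{fl}*}\Gmlb$ is by definition the sheaf on $X_{\mr{fl}}$ associated to the presheaf $U\mapsto H^r_{\mr{kfl}}(U,\Gmlb)$. An abelian presheaf has zero sheafification exactly when every local section is annihilated by some cover, so it suffices to show that for every $U\in(\mr{fs}/X)$ and every $\gamma\in H^r_{\mr{kfl}}(U,\Gmlb)$ with $r\geq1$ there is a classical flat cover of $U$ on which $\gamma$ restricts to $0$. I would first reduce, by a standard limit argument, to the case in which the underlying scheme of $U$ is noetherian and affine (writing an arbitrary affine as a filtered colimit of finitely generated $\mc{O}_X$-subalgebras and using that the class $\gamma$ descends to a finite stage). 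For such a $U$, fix a point $u$ and pass to the strict henselization $\Spec\mc{O}^{\mr{sh}}_{U,\bar u}$ with its induced log structure: this is a noetherian strictly Henselian local fs log scheme, and since $\overline{M}_{U,\bar u}:=M_{U,\bar u}/\mc{O}_{U,\bar u}^{\times}$ is an fs monoid one can choose a chart $P\to M$ realizing $P\xrightarrow{\cong}\overline{M}_{U,\bar u}$. Thus the hypotheses of Theorem~\ref{thm2.1} are met and $H^r_{\mr{kfl}}(\Spec\mc{O}^{\mr{sh}}_{U,\bar u},\Gmlb)=0$, so $\gamma$ dies in this stalk.

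The remaining step, which I expect to be the main obstacle, is to promote this pointwise vanishing to vanishing over an \'etale neighbourhood. Writing $\Spec\mc{O}^{\mr{sh}}_{U,\bar u}=\varprojlim_{(V,\bar v)}V$ as the cofiltered limit over \'etale neighbourhoods of $\bar u$, I would invoke continuity of Kummer cohomology in the locally noetherian setting to identify $\varinjlim_{(V,\bar v)}H^r_{\mr{kfl}}(V,\Gmlb)$ with $H^r_{\mr{kfl}}(\Spec\mc{O}^{\mr{sh}}_{U,\bar u},\Gmlb)$; then $\gamma$ already vanishes in $H^r_{\mr{kfl}}(V_u,\Gmlb)$ for some \'etale $V_u\to U$ whose image contains $u$. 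As $u$ ranges over $U$, the family $\{V_u\to U\}_u$ is an \'etale, hence classical flat, cover on which $\gamma$ vanishes, which proves part (1). The delicate point is precisely this continuity statement: unlike the Kummer \'etale case, where one may simply compute stalks of $R^r\varepsilon_{\mr{\acute{e}t}*}\Gmlb$ directly as $H^r_{\mr{k\acute{e}t}}(\Spec\mc{O}^{\mr{sh}}_{U,\bar u},\Gmlb)$, the Kummer \emph{flat} cohomology must be passed to the limit with care, using the noetherian hypotheses, the finite presentation of the covers $X_m$, and the compatibility of $\Gmlb$ with the limit of the log structures.

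Finally, part (2) follows formally. The defining short exact sequence $0\to\Gm\to\Gml\to\Gmlb\to0$ on $X_{\mr{kfl}}$ yields a long exact sequence
$$\cdots\to R^{r-1}\varepsilon_{\mr{fl}*}\Gmlb\to R^r\varepsilon_{\mr{fl}*}\Gm\to R^r\varepsilon_{\mr{fl}*}\Gml\to R^r\varepsilon_{\mr{fl}*}\Gmlb\to\cdots,$$
and part (1) forces both flanking terms $R^{r-1}\varepsilon_{\mr{fl}*}\Gmlb$ and $R^r\varepsilon_{\mr{fl}*}\Gmlb$ to vanish as soon as $r\geq2$. Hence the middle arrow $R^r\varepsilon_{\mr{fl}*}\Gm\to R^r\varepsilon_{\mr{fl}*}\Gml$ is an isomorphism for every $r\geq2$, and the same reasoning with $\varepsilon_{\mr{\acute{e}t}}$ in place of $\varepsilon_{\mr{fl}}$ gives the \'etale assertions.
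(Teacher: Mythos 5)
Your proposal is correct and follows essentially the same route as the paper: part (1) is reduced, by viewing $R^r\varepsilon_{\mr{fl}*}\Gmlb$ as the sheafification of $U\mapsto H^r_{\mr{kfl}}(U,\Gmlb)$ and localizing at noetherian strictly Henselian local rings with charts $P\xrightarrow{\cong}M_{X,x}/\mc{O}_{X,x}^{\times}$, to the vanishing established in Theorem \ref{thm2.1} (with Lemma \ref{lem2.1}), and part (2) is the formal consequence of the long exact sequence attached to $0\to\Gm\to\Gml\to\Gmlb\to0$, whose flanking terms $R^{r-1}\varepsilon_{\mr{fl}*}\Gmlb$ and $R^r\varepsilon_{\mr{fl}*}\Gmlb$ die for $r\geq2$. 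The limit/continuity step you single out as the delicate point is exactly the standard stalk-computation reduction that the paper invokes implicitly here (as it does throughout, e.g.\ in Corollary \ref{cor1.5} and Theorem \ref{thm1.9}), so your write-up differs only in making that step explicit rather than in substance.
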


\begin{cor}
Let $X$ be a locally noetherian fs log scheme such that the stalks of $M_X^{\mr{gp}}/\mc{O}_X^{\times}$ for the classical \'etale topology have rank at most 1. Let $(\mr{k\acute{e}t}/X)_{\mr{k\acute{e}t}}$ (resp. $(\mr{k\acute{e}t}/X)_{\mr{\acute{e}t}}$) be the category of Kummer \'etale fs log schemes over $X$ endowed with the Kummer \'etale topology (resp. the classical \'etale topology), and let $\varepsilon:(\mr{k\acute{e}t}/X)_{\mr{k\acute{e}t}}\rightarrow (\mr{k\acute{e}t}/X)_{\mr{\acute{e}t}}$ be the canonical forgetful map of sites. Then we have $R^2\varepsilon_* \Gml=0$.
\end{cor}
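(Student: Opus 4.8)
The plan is to prove that the classical \'etale sheaf $R^2\varepsilon_*\Gml$ on $(\mr{k\acute{e}t}/X)_{\mr{\acute{e}t}}$ has vanishing stalks. Since $R^2\varepsilon_*\Gml$ is the classical \'etale sheafification of the presheaf $U\mapsto H^2_{\mr{k\acute{e}t}}(U,\Gml)$, and the stalk at a geometric point is the filtered colimit of these groups over classical \'etale neighbourhoods (cohomology commuting with the resulting limit of schemes), it suffices to show $H^2_{\mr{k\acute{e}t}}(W,\Gml)=0$ for every $W\in(\mr{k\acute{e}t}/X)$ whose underlying scheme is strictly Henselian local. I would first record that such a $W$ falls squarely within the setting of Section~\ref{sec2}: being a (pro-)classical \'etale neighbourhood of a point of a Kummer \'etale $X$-scheme, $W$ is itself Kummer \'etale over $X$, its underlying scheme is strictly Henselian local, and it admits a chart $P\to M_W$ with $P\xrightarrow{\cong}M_{W,w}/\mc{O}_{W,w}^\times$.

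The key numerical input is that the rank hypothesis propagates along Kummer morphisms. Indeed, a Kummer morphism is given \'etale locally by a Kummer chart $P\to Q$ with $Q^{\mr{gp}}/P^{\mr{gp}}$ finite, so $P^{\mr{gp}}\otimes_{\Z}\Q\cong Q^{\mr{gp}}\otimes_{\Z}\Q$ and the rank of $M^{\mr{gp}}/\mc{O}^\times$ is preserved. Hence every object of $(\mr{k\acute{e}t}/X)$ still has $M^{\mr{gp}}/\mc{O}^\times$ of rank at most one at each point; in particular the monoid $P$ attached to $W$ satisfies $\mathrm{rank}(P^{\mr{gp}})\le 1$, so $\bigwedge^2 P^{\mr{gp}}=0$.

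With these reductions in place the two main theorems of the previous sections finish the computation. By Theorem~\ref{thm2.1}(2) applied to $W$ we have $H^2_{\mr{k\acute{e}t}}(W,\Gml)\cong H^2_{\mr{k\acute{e}t}}(W,\Gm)$, so it is enough to kill the latter. Since $W$ is strictly Henselian local, the Leray spectral sequence $H^i_{\mr{\acute{e}t}}(W,R^j\varepsilon_{\mr{\acute{e}t}*}\Gm)\Rightarrow H^{i+j}_{\mr{k\acute{e}t}}(W,\Gm)$ degenerates and identifies $H^2_{\mr{k\acute{e}t}}(W,\Gm)$ with the stalk $(R^2\varepsilon_{\mr{\acute{e}t}*}\Gm)_w$. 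By Theorem~\ref{thm1.1} this sheaf is torsion, its prime-to-$p$ part equals $\varinjlim_{(n,p)=1}\Gm[n](-2)\otimes_{\Z}\bigwedge^2(\Gml/\Gm)_{W_{\mr{\acute{e}t}}}$, which vanishes because $\bigwedge^2 P^{\mr{gp}}=0$, while its $p$-primary part is supported where $p$ is invertible and so has trivial stalk at the closed point $w$ (of residue characteristic $p$). Thus $(R^2\varepsilon_{\mr{\acute{e}t}*}\Gm)_w=0$, whence $H^2_{\mr{k\acute{e}t}}(W,\Gml)=0$ and $R^2\varepsilon_*\Gml=0$.

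I expect the genuine obstacle to be the site-theoretic bookkeeping behind the very first reduction: one must check that the stalk of the higher direct image formed on the restricted site $(\mr{k\acute{e}t}/X)$ is indeed computed by the local Kummer \'etale cohomology groups $H^2_{\mr{k\acute{e}t}}(W,\Gml)$ of Section~\ref{sec2}. This amounts to identifying the localized site $(\mr{k\acute{e}t}/X)_{\mr{k\acute{e}t}}/W$ with the small Kummer \'etale site of $W$ — using the cancellation property that $V\to X$ and $W\to X$ Kummer \'etale force $V\to W$ Kummer \'etale — together with a big-versus-small comparison ensuring that this site computes the same $H^2$ as $(\mr{fs}/W)_{\mr{k\acute{e}t}}$. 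Once this is settled, the numerical vanishing (namely $\bigwedge^2$ of a lattice of rank at most one, plus the residue-characteristic support statement) makes the conclusion immediate.
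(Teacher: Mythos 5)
Your proposal is correct and follows essentially the route the paper intends: the corollary is left without an explicit proof precisely because it is meant to be read off from Theorem \ref{thm2.1} (replacing $\Gml$ by $\Gm$ in degree $2$) together with Theorem \ref{thm1.1}, once one observes, as you do, that the rank-$\leq 1$ hypothesis propagates along Kummer \'etale morphisms (a Kummer chart $P\to Q$ has $Q^{\mr{gp}}/P^{\mr{gp}}$ finite), so that $\bigwedge^2(\Gml/\Gm)$ has vanishing stalks on every object of $(\mr{k\acute{e}t}/X)$ while the residue-characteristic part dies by the support statement (or by the $p$-torsion-freeness of Corollary \ref{cor1.4}). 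The site-theoretic bookkeeping you flag — identifying the localized site with the small Kummer \'etale site via cancellation for Kummer \'etale morphisms, the big-versus-small comparison, and continuity of Kummer \'etale cohomology along the strict Henselization — consists of exactly the routine reductions the paper itself uses implicitly, e.g. in deducing Theorem \ref{thm0.4}(1) from Theorem \ref{thm2.1}.
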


\section{Examples}\label{sec3}
\subsection{Discrete valuation rings}
Let $R$ be a discrete valuation ring with fraction field $K$ and residue field $k$. Let $\pi$ be a uniformizer of $R$, and we endow $X=\Spec R$ with the log structure associated to the homomorphism $\N\rightarrow R,1\mapsto\pi$. Let $x$ be the closed point of $X$ and $i$ the closed immersion $x\hookrightarrow X$, and we endow $x$ with the induced log structure from $X$. Let $\eta$ be the generic point of $X$ and $j$ the open immersion $\eta\hookrightarrow X$.

Now we consider the Leray spectral sequence
\begin{equation}\label{eq3.1}
H^s_{\mr{fl}}(X,R^t\varepsilon_{\mr{fl}*}\Gm)\Rightarrow H^{s+t}_{\mr{kfl}}(X,\Gm).
\end{equation}
We have $H^s_{\mr{fl}}(X,\Gm)=H^s_{\mr{\acute{e}t}}(X,\Gm)$ for $s\geq 0$ by \cite[Thm. 11.7]{gro3}. We have 
\begin{align*}
R^1\varepsilon_{\mr{fl}*}\Gm=&\varinjlim_n\mc{H}om_X(\Z/n\Z(1),\Gm)\otimes_{\Z}(\Gml/\Gm)_{X_{\mr{fl}}}  \\
=&\Q/\Z\otimes_{\Z}(\Gml/\Gm)_{X_{\mr{fl}}}
\end{align*}
by Theorem \ref{thm1.5}.
Then on $(\mr{st}/X)$, we have $(\Gml/\Gm)_{X_{\mr{fl}}}\cong i_*\Z$. Therefore
\begin{equation}\label{eq3.2}
\begin{split}
H^s_{\mr{fl}}(X,R^1\varepsilon_{\mr{fl}*}\Gm)=H^s_{\mr{fl}}(X,\Q/\Z\otimes_{\Z}i_*\Z) =&H^s_{\mr{fl}}(x,\Q/\Z)  \\
=&H^s_{\mr{\acute{e}t}}(x,\Q/\Z)
\end{split}
\end{equation}
for $s\geq0$. We also have 
\begin{equation}\label{eq3.3}
H^s_{\mr{fl}}(X,R^2\varepsilon_{\mr{fl}*}\Gm)=H^s_{\mr{fl}}(X,0)=0
\end{equation}
for $s\geq0$ by Corollary \ref{cor1.10}. 

\begin{thm}
Assume that $k$ is a finite field. Then we have 
$$H^1_{\mr{kfl}}(X,\Gm)\cong H^0_{\mr{\acute{e}t}}(x,\Q/\Z)\cong\Q/\Z$$
and
$$H^2_{\mr{kfl}}(X,\Gm)\cong H^1_{\mr{\acute{e}t}}(x,\Q/\Z)\cong\Q/\Z.$$
\end{thm}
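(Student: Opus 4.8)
The plan is to extract both groups from the Leray spectral sequence (\ref{eq3.1}), whose $E_2$-page is already determined in the relevant rows: the row $t=0$ is $E_2^{s,0}=H_{\mr{\acute{e}t}}^s(X,\Gm)$ (using $H_{\mr{fl}}^s(X,\Gm)=H_{\mr{\acute{e}t}}^s(X,\Gm)$), the row $t=1$ is $E_2^{s,1}=H_{\mr{\acute{e}t}}^s(x,\Q/\Z)$ by (\ref{eq3.2}), and the row $t=2$ vanishes by (\ref{eq3.3}). The only missing ingredient is therefore the classical \'etale cohomology of $\Gm$ on $X$ in low degrees, for which I would record $H_{\mr{\acute{e}t}}^1(X,\Gm)=\mr{Pic}(X)=0$ (as $R$ is local), $H_{\mr{\acute{e}t}}^2(X,\Gm)=\mr{Br}(X)=0$, and $H_{\mr{\acute{e}t}}^3(X,\Gm)=0$; on the residue field I would use $H_{\mr{\acute{e}t}}^0(x,\Q/\Z)=\Q/\Z$ and $H_{\mr{\acute{e}t}}^1(x,\Q/\Z)=\mr{Hom}_{\mr{cont}}(\widehat{\Z},\Q/\Z)=\Q/\Z$.

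For $H_{\mr{kfl}}^1(X,\Gm)$ I would feed these into the five-term exact sequence of (\ref{eq3.1}),
\[
0\to H_{\mr{\acute{e}t}}^1(X,\Gm)\to H_{\mr{kfl}}^1(X,\Gm)\to H_{\mr{\acute{e}t}}^0(x,\Q/\Z)\xrightarrow{d_2}H_{\mr{\acute{e}t}}^2(X,\Gm);
\]
since the two $\Gm$-terms vanish this gives $H_{\mr{kfl}}^1(X,\Gm)\cong H_{\mr{\acute{e}t}}^0(x,\Q/\Z)\cong\Q/\Z$. For $H_{\mr{kfl}}^2(X,\Gm)$ the vanishing of the row $t=2$ (hence of $E_\infty^{0,2}$) leaves only the two graded pieces $E_\infty^{2,0}$ and $E_\infty^{1,1}$ of the induced filtration. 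The first is a subquotient of $H_{\mr{\acute{e}t}}^2(X,\Gm)=0$ and so vanishes, while the second is $E_\infty^{1,1}=\ker\bigl(d_2\colon H_{\mr{\acute{e}t}}^1(x,\Q/\Z)\to H_{\mr{\acute{e}t}}^3(X,\Gm)\bigr)=H_{\mr{\acute{e}t}}^1(x,\Q/\Z)$ because $H_{\mr{\acute{e}t}}^3(X,\Gm)=0$; hence $H_{\mr{kfl}}^2(X,\Gm)\cong H_{\mr{\acute{e}t}}^1(x,\Q/\Z)\cong\Q/\Z$.

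The main obstacle is the block of classical inputs in the bottom row, namely $\mr{Br}(X)=H_{\mr{\acute{e}t}}^2(X,\Gm)=0$ and $H_{\mr{\acute{e}t}}^3(X,\Gm)=0$: these are exactly what forces the two potentially destructive differentials $d_2$ to vanish (without $\mr{Br}(X)=0$ the image of $H_{\mr{\acute{e}t}}^0(x,\Q/\Z)$ in $H_{\mr{kfl}}^1$ could be truncated, and without $H_{\mr{\acute{e}t}}^3(X,\Gm)=0$ the term $H_{\mr{\acute{e}t}}^1(x,\Q/\Z)$ could be cut down in $H_{\mr{kfl}}^2$). I would establish them from the local structure of $X$, using the localization sequence $0\to\Gm\to j_*\Gm\to i_*\Z\to0$ attached to $j\colon\eta\hookrightarrow X$ and $i\colon x\hookrightarrow X$, together with the Galois cohomology of the local field $K$ (where $H^1=0$, $H^2=\mr{Br}(K)=\Q/\Z$ and $H^{\geq3}=0$) and of the finite field $k$; it is precisely here that the Henselian DVR with finite residue field enters, giving $\mr{Br}(X)\cong\mr{Br}(\F_q)=0$ and the vanishing of $\Gm$-cohomology in degree $3$.
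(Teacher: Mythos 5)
Your proposal is correct and is essentially the paper's own proof: the paper likewise reads both groups off the Leray spectral sequence (\ref{eq3.1}) with the rows (\ref{eq3.2}) and (\ref{eq3.3}), and kills the bottom row by citing $H^s_{\mr{fl}}(X,\Gm)=H^s_{\mr{\acute{e}t}}(X,\Gm)=0$ for $s>0$ from Milne's \emph{Arithmetic Duality Theorems} (Chap.~II, Prop.~1.5(a)), together with $H^1_{\mr{\acute{e}t}}(x,\Q/\Z)=\mr{Hom}(\hat{\Z},\Q/\Z)\cong\Q/\Z$, so the only difference is that you re-derive this classical vanishing by hand (localization sequence plus local class field theory) instead of quoting it. One remark: that vanishing genuinely requires $R$ Henselian (for $R=\Z_{(p)}$ one has $\mr{Br}(X)\neq0$, so the $H^2$ computation would fail), a hypothesis you correctly flag but which the paper's statement omits and imports silently through the citation to Milne, whose standing assumption in that section is an excellent Henselian discrete valuation ring with finite residue field.
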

\begin{proof}
We have 
$$H^s_{\mr{fl}}(X,\Gm)=H^s_{\mr{\acute{e}t}}(X,\Gm)=0$$
for $s>0$ by \cite[Chap. II, Prop. 1.5 (a)]{mil2}. Since $k$ is a finite field, it has absolute Galois group $\hat{\Z}$. Therefore we can identify the group $H^1_{\mr{\acute{e}t}}(x,\Q/\Z)$ with the Galois cohomology 
$$H^1(\hat{\Z},\Q/\Z)=\mr{Hom}(\hat{\Z},\Q/\Z)\cong\Q/\Z.$$
Then the results follow from the spectral sequence (\ref{eq3.1}) with the help of (\ref{eq3.2}) and (\ref{eq3.3}).
\end{proof}

\subsection{Global Dedekind domains}
Through this subsection, let $K$ be a global field. When $K$ is a number field, $X$ denotes the spectrum of the ring of integers in $K$, and when $K$ is a function field, $k$ denotes the field of constants of $K$ and $X$ denotes the unique connected smooth projective curve over $k$ having $K$ as its function field. Let $S$ be a finite set of closed points of $X$, $U:=X-S$, $j:U\hookrightarrow X$, and $i_x:x\hookrightarrow X$ for each closed point $x\in X$. We endow $X$ with log structure $j_{*}\mc{O}^{\times}_U\cap\mc{O}_X\rightarrow \mc{O}_X$. In the case of a number field, let $S_{\infty}:=S\cup\{\text{infinite places of $K$}\}$, and in the case of function field, we just let $S_{\infty}:=S$.

On $(\mr{st}/X)$, we have $R^1\varepsilon_{\mr{fl}*}\Gm=\bigoplus_{x\in S}i_{x,*}\Q/\Z$ and $R^2\varepsilon_{\mr{fl}*}\Gm=0$. The Leray spectral sequence
$$H^s_{\mr{fl}}(X,R^t\varepsilon_{\mr{fl}*}\Gm)\Rightarrow H^{s+t}_{\mr{kfl}}(X,\Gm)$$ gives rise to a long exact sequence
\begin{equation}\label{eq3.4}
\begin{split}
0\rightarrow &H^1_{\mr{fl}}(X,\Gm)\rightarrow H^1_{\mr{kfl}}(X,\Gm)\xrightarrow{\alpha} H^0_{\mr{fl}}(X,R^1\varepsilon_{\mr{fl}*}\Gm)  \\
\rightarrow &H^2_{\mr{fl}}(X,\Gm)\rightarrow H^2_{\mr{kfl}}(X,\Gm)\rightarrow H^1_{\mr{fl}}(X,R^1\varepsilon_{\mr{fl}*}\Gm)  \\
\rightarrow &H^3_{\mr{fl}}(X,\Gm)\rightarrow H^3_{\mr{kfl}}(X,\Gm).
\end{split}
\end{equation}
The Leray spectral sequences for $\Gm$ and $\Gml$ respectively together give rise to the following commutative diagram
$$\xymatrix{
&0\ar[d] &0\ar[d]  \\
&H^1_{\mr{fl}}(X,\Gm)\ar[r]\ar[d] &H^1_{\mr{fl}}(X,\Gml)\ar[d]  \\
&H^1_{\mr{kfl}}(X,\Gm)\ar[r]\ar[d]^{\alpha} &H^1_{\mr{kfl}}(X,\Gml)\ar[d]  \\
&H^0_{\mr{fl}}(X,R^1\varepsilon_{\mr{fl}*}\Gm)\ar[r]\ar[d] &H^0_{\mr{fl}}(X,R^1\varepsilon_{\mr{fl}*}\Gml)\ar[d]  \\
H^1_{\mr{fl}}(X,(\Gml/\Gm)_{X_{\mr{fl}}})\ar[r] &H^2_{\mr{fl}}(X,\Gm)\ar[r] &H^2_{\mr{fl}}(X,\Gml)  \\
}$$
with exact rows and columns. We have $R^1\varepsilon_{\mr{fl}*}\Gml=0$ by Kato's logarithmic Hilbert 90 (see \cite[Cor. 3.21]{niz1}), and 
$$H^1_{\mr{fl}}(X,(\Gml/\Gm)_{X_{\mr{fl}}})=H^1_{\mr{fl}}(X,\bigoplus_{x\in S}i_{x,*}\Z)=\bigoplus_{x\in S}H^1_{\mr{fl}}(x,\Z)=0.$$ By diagram chasing, we find that the map $\alpha$ is surjective. We have 
$$H^0_{\mr{fl}}(X,R^1\varepsilon_{\mr{fl}*}\Gm)=\bigoplus_{x\in S}H^0_{\mr{fl}}(X,i_{x,*}\Q/\Z)=\bigoplus_{x\in S}H^0_{\mr{fl}}(x,\Q/\Z)=\bigoplus_{x\in S}\Q/\Z$$
and
$$H^1_{\mr{fl}}(X,R^1\varepsilon_{\mr{fl}*}\Gm)=\bigoplus_{x\in S}H^1_{\mr{fl}}(X,i_{x,*}\Q/\Z)=\bigoplus_{x\in S}H^1_{\mr{fl}}(x,\Q/\Z).$$
We also have 
$$H^2_{\mr{fl}}(X,\Gm)\cong\begin{cases}
(\Z/2\Z)^{r-1},&\text{ if $K$ has $r>0$ real places;} \\
0,&\text{ otherwise}
\end{cases}$$
and $H^3_{\mr{fl}}(X,\Gm)\cong\Q/\Z$ by \cite[Chap. II, Prop. 2.1]{mil2}. Therefore the exact sequence (\ref{eq3.4}) splits into two exact sequences
\begin{equation}
0\rightarrow H^1_{\mr{fl}}(X,\Gm)\rightarrow H^1_{\mr{kfl}}(X,\Gm)\xrightarrow{\alpha} H^0_{\mr{fl}}(X,R^1\varepsilon_{\mr{fl}*}\Gm)\rightarrow 0  
\end{equation}
and
\begin{equation}
0\rightarrow H^2_{\mr{fl}}(X,\Gm)\rightarrow H^2_{\mr{kfl}}(X,\Gm)\rightarrow H^1_{\mr{fl}}(X,R^1\varepsilon_{\mr{fl}*}\Gm)\rightarrow H^3_{\mr{fl}}(X,\Gm),
\end{equation}
which can be identified with 
\begin{equation}\label{eq3.7}
0\rightarrow H^1_{\mr{fl}}(X,\Gm)\rightarrow H^1_{\mr{kfl}}(X,\Gm)\xrightarrow{\alpha} \bigoplus_{x\in S}\Q/\Z\rightarrow 0  
\end{equation}
and
\begin{equation}
0\rightarrow H^2_{\mr{fl}}(X,\Gm)\rightarrow H^2_{\mr{kfl}}(X,\Gm)\rightarrow \bigoplus_{x\in S}H^1_{\mr{fl}}(x,\Q/\Z) \rightarrow \Q/\Z
\end{equation}
with $H^2_{\mr{fl}}(X,\Gm)=\begin{cases}
(\Z/2\Z)^{r-1}, &\text{ if $K$ has $r>0$ real palces;}   \\
0, &\text{ otherwise.}
\end{cases}$

Let $\mr{Pic}(X)$ (resp. $\mr{Pic}(X^{\mr{log}})$) denote the group $H^1_{\mr{fl}}(X,\Gm)$ (resp. $H^1_{\mr{kfl}}(X,\Gm)$). We have a canonical degree map $\mr{deg}:\mr{Pic}(X)\rightarrow\Z$ in both the number field case and the function field case. By the short exact sequence (\ref{eq3.7}), the degree map $\mr{deg}:\mr{Pic}(X)\rightarrow\Z$ extends uniquely to a map 
\begin{equation}
\mr{deg}:\mr{Pic}(X^{\mr{log}})\rightarrow\Q,
\end{equation}
and the two degree maps fit into the following commutative diagram
\begin{equation}
\xymatrix{
0\ar[r] &\mr{Pic}(X)\ar[r]\ar[d]^{\mr{deg}} &\mr{Pic}(X^{\mr{log}})\ar[r]\ar[d]^{\mr{deg}} &\bigoplus_{x\in S}\Q/\Z\ar[r]\ar[d]^{\mr{sum}} &0 \\
0\ar[r] &\Z\ar[r] &\Q\ar[r] &\Q/\Z\ar[r] &0
}
\end{equation}
with exact rows. 

To summarise, we get the following proposition.

\begin{prop}
Let the notation and the assumptions be as in the beginning of this subsection. Then we have the following.
\begin{enumerate}[(1)]
\item The group $\mr{Pic}(X^{\mr{log}}):=H^1_{\mr{kfl}}(X,\Gm)$ admits a canonical degree map into $\Q$ which extends the canonical degree map on $\mr{Pic}(X)$, and the two degree maps fit into the following commutative digram
$$\xymatrix{
0\ar[r] &\mr{Pic}(X)\ar[r]\ar[d]^{\mr{deg}} &\mr{Pic}(X^{\mr{log}})\ar[r]\ar[d]^{\mr{deg}} &\bigoplus_{x\in S}\Q/\Z\ar[r]\ar[d]^{\mr{sum}} &0 \\
0\ar[r] &\Z\ar[r] &\Q\ar[r] &\Q/\Z\ar[r] &0
}$$
with exact rows.
\item The group $H^2_{\mr{kfl}}(X,\Gm)$ fits into an exact sequence
$$0\rightarrow H^2_{\mr{fl}}(X,\Gm)\rightarrow H^2_{\mr{kfl}}(X,\Gm)\rightarrow \bigoplus_{x\in S}H^1_{\mr{fl}}(x,\Q/\Z) \rightarrow \Q/\Z.$$
If $K$ is a function field with its field of constants algebraically closed, then we have $H^2_{\mr{kfl}}(X,\Gm)\cong H^2_{\mr{fl}}(X,\Gm)=0$. If the residue fields at the points of $S$ are finite, then the above exact sequence becomes
$$0\rightarrow H^2_{\mr{fl}}(X,\Gm)\rightarrow H^2_{\mr{kfl}}(X,\Gm)\rightarrow \bigoplus_{x\in S}\Q/\Z \rightarrow \Q/\Z$$
with $H^2_{\mr{fl}}(X,\Gm)=\begin{cases}
(\Z/2\Z)^{r-1}, &\text{ if $K$ has $r>0$ real places;}   \\
0, &\text{ otherwise.}
\end{cases}$
\end{enumerate}
\end{prop}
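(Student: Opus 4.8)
The plan is to read both parts off the exact sequences and the commutative diagram assembled in the discussion preceding the statement, adding only the Galois-cohomological input needed to evaluate the two special cases in part~(2).

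For part~(1), I would take the short exact sequence (\ref{eq3.7}),
$$0\rightarrow H^1_{\mr{fl}}(X,\Gm)\rightarrow H^1_{\mr{kfl}}(X,\Gm)\xrightarrow{\alpha}\bigoplus_{x\in S}\Q/\Z\rightarrow 0,$$
as the top row and the tautological sequence $0\to\Z\to\Q\to\Q/\Z\to 0$ as the bottom row, with the classical degree $\mr{deg}\colon\mr{Pic}(X)\to\Z$ on the left and the summation map on the right. The extended degree map $\mr{Pic}(X^{\mr{log}})\to\Q$ was already produced above; I would only record that it is the \emph{unique} extension of $\mr{deg}$, since any two extensions differ by a homomorphism $\bigoplus_{x\in S}\Q/\Z\to\Q$ from a torsion group into a torsion-free group, hence by the zero map. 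Commutativity of the two squares is exactly the diagram displayed just before the statement, so part~(1) is a faithful transcription of that diagram.

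For part~(2), the governing exact sequence is the second of the two sequences displayed before the statement, namely
$$0\rightarrow H^2_{\mr{fl}}(X,\Gm)\rightarrow H^2_{\mr{kfl}}(X,\Gm)\rightarrow\bigoplus_{x\in S}H^1_{\mr{fl}}(x,\Q/\Z)\rightarrow\Q/\Z,$$
which comes from the Leray spectral sequence together with $R^1\varepsilon_{\mr{fl}*}\Gm=\bigoplus_{x\in S}i_{x,*}\Q/\Z$ (Theorem \ref{thm1.5}) and $R^2\varepsilon_{\mr{fl}*}\Gm=0$ (Corollary \ref{cor1.10}) on $(\mr{st}/X)$, the value of $H^2_{\mr{fl}}(X,\Gm)$ being \cite[Chap. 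II, Prop. 2.1]{mil2}. The only genuinely new work is to evaluate $\bigoplus_{x\in S}H^1_{\mr{fl}}(x,\Q/\Z)$. If $K$ is a function field with algebraically closed field of constants, then each residue field $\kappa(x)$ is a finite extension of an algebraically closed field and is therefore itself algebraically closed, so its absolute Galois group is trivial and $H^1_{\mr{fl}}(x,\Q/\Z)=H^1_{\mr{\acute{e}t}}(x,\Q/\Z)=0$; moreover a function field has no real places, so the cited formula gives $H^2_{\mr{fl}}(X,\Gm)=0$, and the sequence collapses to $H^2_{\mr{kfl}}(X,\Gm)\cong H^2_{\mr{fl}}(X,\Gm)=0$. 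If instead the residue fields at the points of $S$ are finite, then each $\kappa(x)$ has absolute Galois group $\widehat{\Z}$, whence $H^1_{\mr{fl}}(x,\Q/\Z)=\mr{Hom}(\widehat{\Z},\Q/\Z)\cong\Q/\Z$, which turns the sequence into the displayed one with $\bigoplus_{x\in S}\Q/\Z$ in the third term.

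I do not expect a serious obstacle: the substantive inputs, the sheaf identity $R^1\varepsilon_{\mr{fl}*}\Gm=\bigoplus_{x\in S}i_{x,*}\Q/\Z$ and the vanishing $R^2\varepsilon_{\mr{fl}*}\Gm=0$, are already in hand, and the remaining steps are bookkeeping with two short exact sequences plus the standard computations $H^1(\widehat{\Z},\Q/\Z)\cong\Q/\Z$ and the vanishing of $H^1$ at an algebraically closed point. The one point that deserves a line of care is the well-definedness of the extended degree map and the commutativity of its diagram; uniqueness is forced immediately by the torsion/torsion-free dichotomy, and commutativity is forced by the compatibility of the two short exact sequences, so no extension-class computation is actually required.
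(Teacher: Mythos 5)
Your argument follows the paper's own route almost step for step: both proofs treat the proposition as a summary of the two exact sequences assembled just before the statement, and your evaluation $H^1_{\mr{fl}}(x,\Q/\Z)\cong H^1_{\mr{\acute{e}t}}(x,\Q/\Z)\cong\mr{Hom}(\hat{\Z},\Q/\Z)\cong\Q/\Z$ at a point with finite residue field is exactly the computation in the paper. Your remark that the extended degree map is the \emph{unique} extension, because any two extensions differ by a homomorphism from the torsion group $\bigoplus_{x\in S}\Q/\Z$ into the torsion-free group $\Q$, is a correct and worthwhile elaboration of what the paper asserts without comment.

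There is, however, one genuine gap, and it sits precisely at the step the paper's proof singles out as needing a separate argument. In the function-field case with algebraically closed field of constants, you deduce $H^2_{\mr{fl}}(X,\Gm)=0$ from ``the cited formula'' \cite[Chap.~II, Prop.~2.1]{mil2} on the grounds that a function field has no real places. But that proposition is proved for global fields; in the function-field case its hypotheses require the constant field to be \emph{finite}, so it does not apply to a curve over an algebraically closed field. Nor can it simply be transported: the same proposition gives $H^3_{\mr{fl}}(X,\Gm)\cong\Q/\Z$, which fails over algebraically closed constants (there $H^3$ vanishes), so the formula is genuinely sensitive to the arithmetic of the constant field. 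The paper instead proves the vanishing $H^2_{\mr{fl}}(X,\Gm)=0$ for a smooth projective curve over an algebraically closed field via Tsen's theorem, in the form \cite[\href{https://stacks.math.columbia.edu/tag/03RM}{Tag 03RM}]{stacks-project}. Your companion observation --- that all residue fields of closed points are then algebraically closed, hence $\bigoplus_{x\in S}H^1_{\mr{fl}}(x,\Q/\Z)=0$ and the four-term sequence collapses to $H^2_{\mr{kfl}}(X,\Gm)\cong H^2_{\mr{fl}}(X,\Gm)$ --- is correct and is the implicit other half of the paper's argument. With the single citation for $H^2_{\mr{fl}}(X,\Gm)=0$ replaced by Tsen's theorem, your proof is complete and coincides with the paper's.
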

\begin{proof}
We are left with checking $H^2_{\mr{fl}}(X,\Gm)=0$ for $X$ a smooth projective curve over an algebraically closed field, and $H^1_{\mr{fl}}(x,\Q/\Z)\cong\Q/\Z$ for $x$ a point with finite residue field. The first follows from \cite[\href{https://stacks.math.columbia.edu/tag/03RM}{Tag 03RM}]{stacks-project}, and the second follows from
$$H^1_{\mr{fl}}(x,\Q/\Z)\cong H^1_{\mr{\acute{e}t}}(x,\Q/\Z)\cong\mr{Hom}(\hat{\Z},\Q/\Z)\cong\Q/\Z.$$
\end{proof}

\begin{rmk}
A homomorphism $(\Q/\Z)^n\rightarrow\Q/\Z$ is either zero or surjective. It follows that, in the case that the residue fields at the points of $S$ are finite, we have either a short exact sequence
$$0\rightarrow H^2_{\mr{fl}}(X,\Gm)\rightarrow H^2_{\mr{kfl}}(X,\Gm)\rightarrow \bigoplus_{x\in S}\Q/\Z \rightarrow 0$$
or an exact sequence
$$0\rightarrow H^2_{\mr{fl}}(X,\Gm)\rightarrow H^2_{\mr{kfl}}(X,\Gm)\rightarrow \bigoplus_{x\in S}\Q/\Z \rightarrow \Q/\Z\rightarrow0.$$
\end{rmk}

\appendix
\section{}
\begin{lem}\label{lemA.1}
Let $S$ be a scheme, $F$ a finite flat commutative group scheme of multiplicative type over $S$ which is killed by some positive integer $n$, and $G$ a commutative group scheme over $S$ satisfying one of the following two conditions
\begin{enumerate}[(1)]
\item $G$ is smooth and affine over $S$; 
\item $G[n]:=\mr{Ker}(G\xrightarrow{n}G)$ is finite flat over $S$. 
\end{enumerate}
Then the fppf sheaf $H:=\mc{H}om_S(F,G)$ is representable by an \'etale quasi-finite separated group scheme over $S$;
\end{lem}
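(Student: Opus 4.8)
The plan is to reduce both cases to a statement about homomorphisms into a finite target and then to verify representability, separatedness and quasi-finiteness, and finally étaleness, in turn; the étaleness is the crux, and Cartier duality is what lets me handle the possibly non-smooth target in case (2). The guiding principle throughout is the rigidity of groups of multiplicative type.

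First I would record the reductions. Since $F$ is killed by $n$, every homomorphism $F\to G$ factors through $G[n]$, so $\mc{H}om_S(F,G)=\mc{H}om_S(F,G[n])$; the target is the finite flat group scheme $G[n]$ in case (2), and the affine (but possibly non-quasifinite) closed subgroup $G[n]\subseteq G$ in case (1). Next, representability by an \'etale quasi-finite separated scheme can be checked \'etale-locally on $S$ and descends along \'etale covers, so I may replace $S$ by such a cover. As $F$ is finite flat of multiplicative type, its Cartier dual $F^{\vee}=\mc{H}om_S(F,\Gm)$ is finite \'etale, hence becomes constant after an \'etale cover; thus I may assume $F$ is diagonalizable, and writing $F\cong\prod_i\mu_{n_i}$ with $n_i\mid n$ reduces the lemma to the single case $F=\mu_m$ with $m\mid n$.

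For representability I would use Weil restriction. In both cases the target is affine, so the functor $T\mapsto\mr{Mor}_T(\mu_{m,T},\,\cdot\,)$ is represented by the affine Weil restriction (representable since $\mu_m$ is finite locally free and the target is affine, cf. \cite[Chap. V, 1.4 (a)]{mil1}), and the homomorphism conditions cut out a closed subgroup scheme because the target is separated. Hence $\mc{H}om_S(\mu_m,G)$ is representable by an affine, in particular separated, $S$-scheme. Quasi-finiteness I would then check on geometric fibres: over a field $\mr{Hom}(\mu_m,G)$ is finite, since it vanishes on unipotent parts and is a finite group on tori and on finite group schemes.

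The heart of the argument is \'etaleness, and it is exactly here that the multiplicative-type hypothesis is used. I would prove formal \'etaleness: for a square-zero thickening $T_0\hookrightarrow T$ over $S$ with ideal $I$, a homomorphism over $T_0$ lifts uniquely to $T$. In case (1), $G$ is smooth, so the deformations of $\mu_m\to G$ are governed by the algebraic group cohomology $H^j(\mu_m,\mr{Lie}(G)\otimes I)$ for $j=1,2$ (ambiguity and obstruction), and these vanish because the diagonalizable group $\mu_m$ is linearly reductive \cite[Exp. I, Thm. 5.3.3]{sga3-1}. In case (2), where $G[n]$ need not be smooth, I would apply Cartier duality once more, $\mc{H}om_S(\mu_m,G[n])\cong\mc{H}om_S(G[n]^{\vee},\Z/m\Z)$: a homomorphism from the flat group scheme $G[n]^{\vee}$ into the \'etale group $\Z/m\Z$ is a locally constant character, and since $T_0\hookrightarrow T$ and $G[n]^{\vee}_{T_0}\hookrightarrow G[n]^{\vee}_{T}$ are homeomorphisms such characters lift uniquely. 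Together with local finite presentation, clear from the Weil-restriction description, this gives \'etaleness, whence $\mc{H}om_S(\mu_m,G)$ and thus $\mc{H}om_S(F,G)$ is \'etale, quasi-finite and separated. The one non-formal point — and the main obstacle — is precisely this rigidity of infinitesimal lifts; it is what forces the case distinction, Cartier duality being needed in case (2) to trade the non-smooth target $G[n]$ for a finite \'etale one, while the reductions and the representability are routine by comparison.
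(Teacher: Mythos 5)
Your proof is correct, but it takes a genuinely different route from the paper's. The paper disposes of case (1) in one stroke by citing SGA3, Exp.\ XI, Cor.\ 4.2, which directly gives representability of $\mc{H}om_S(F,G)$ by a \emph{smooth} separated group scheme whenever $F$ is of multiplicative type and $G$ is smooth; étaleness then follows from quasi-finiteness of the fibres, obtained from the structure theory over a field (SGA3, Exp.\ XVII, Thm.\ 7.2.1 and Prop.\ 2.4) much as in your fibrewise check. For case (2) the paper does not dualize: it takes the canonical smooth resolution $0\to G[n]\to G_1\to G_2\to 0$ with $G_1,G_2$ smooth affine (Milne, \emph{Arithmetic Duality Theorems}, Thm.\ A.5), applies case (1) to the $\mc{H}om_S(F,G_i)$, and realizes $H$ as the kernel of the induced map $\alpha$, which is étale as a morphism between schemes étale over $S$, so that $H$ is open and closed in $\mc{H}om_S(F,G_1)$. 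You instead rebuild the SGA3 input by hand: étale-local diagonalization of $F$ (legitimate — descent is effective here since your Hom scheme is affine by the Weil-restriction construction, and in any case quasi-finite separated schemes are quasi-affine), representability via Weil restriction with the homomorphism condition as a closed condition, and étaleness via infinitesimal rigidity — vanishing of Hochschild $H^1$ and $H^2$ for the linearly reductive $\mu_m$ in case (1), where the $H^1$-vanishing, i.e.\ $\mr{Hom}(\mu_m,\mr{Lie}(G)\otimes I)=0$, is exactly what gives uniqueness of lifts, and in case (2) the double Cartier duality $\mc{H}om_S(\mu_m,G[n])\cong\mc{H}om_S(G[n]^{\vee},\Z/m\Z)$, which reduces lifting across a square-zero thickening $T_0\hookrightarrow T$ to a purely topological statement about morphisms into a constant scheme along a universal homeomorphism (note this also uses that $|G[n]^{\vee}_{T}\times_T G[n]^{\vee}_{T}|=|G[n]^{\vee}_{T_0}\times_{T_0} G[n]^{\vee}_{T_0}|$, so the homomorphism condition itself is topological). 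What your route buys is self-containedness and the elimination of the smooth resolution; what the paper's route buys is brevity, outsourcing all deformation theory to SGA3 and handling the non-smooth target $G[n]$ by a formal kernel argument instead of duality. One small point worth making explicit in your write-up: your quasi-finiteness check only counts geometric points, which does suffice, since a finite-type scheme over an algebraically closed field with only finitely many rational points has dimension zero — or simply because, once étaleness is established, the fibres are discrete and their points are in bijection with Galois orbits of the finitely many geometric homomorphisms.
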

\begin{proof}
We first deal with case (1). By \cite[Exp. XI, Cor. 4.2]{sga3-2}, the sheaf $\mc{H}om_S(F,G)$ is representable by a smooth separated group scheme over $S$. The fibres of the group scheme $\mc{H}om_S(F,G)$ over $S$ are finite by the structure theorem \cite[Exp. XVII, Thm. 7.2.1]{sga3-2} of commutative group schemes and \cite[Exp. XVII, Prop. 2.4]{sga3-2}. It follows that $\mc{H}om_S(F,G)$ is a quasi-finite \'etale separated group scheme over $S$.

Now we deal with case (2). Clearly we have $\mc{H}om_S(F,G)=\mc{H}om_S(F,G[n])$. Let $0\rightarrow G[n]\rightarrow G_1\rightarrow G_2\rightarrow 0$ be the canonical smooth resolution of $G$ with $G_1,G_2$ affine smooth commutative group schemes over $S$, see \cite[Thm. A.5]{mil2}. Then we have an exact sequence
$$0\rightarrow \mc{H}om_S(F,G)\rightarrow \mc{H}om_S(F,G_1)\xrightarrow{\alpha} \mc{H}om_S(F,G_2)$$
of fppf sheaves of abelian groups over $S$. By case (1), the sheaves $\mc{H}om_S(F,G_i)$ for $i=1,2$ are representable by \'etale quasi-finite separated group schemes over $S$. Hence $H=\mc{H}om_S(F,G)$ as the kernel of $\alpha$ is representable. Furthermore by \cite[\href{https://stacks.math.columbia.edu/tag/02GW}{Tag 02GW}]{stacks-project}, $\alpha$ is \'etale. It is also separated. It follows that $H$ is \'etale, separated, and quasi-finite. 
\end{proof}

\section*{Acknowledgement}
The author has benefited a lot from regular discussions on this project with Professor Ulrich G\"ortz in the past one and a half years. This article would never exist without his help. The author thanks Professor Chikara Nakayama for very helpful correspondences, as well as for telling him the reference \cite{swa1} for cup-products on an arbitrary site. The author thanks the anonymous referee for corrections and very helpful suggestions. This work has been partially supported by SFB/TR 45 ``Periods, moduli spaces and arithmetic of algebraic varieties''.
\bibliographystyle{alpha}
\bibliography{bib}

\end{document}